\newtheorem{theorem}{Theorem}[section] 
\newtheorem{lemma}[theorem]{Lemma}
\newtheorem{corollary}[theorem]{Corollary} 
\newtheorem{prop}[theorem]{Proposition}
\newtheorem{claim}[theorem]{Claim}
\newtheorem{fact}[theorem]{Fact}
\newtheorem*{theorem*}{Theorem} 
\newtheorem*{corollary*}{Corollary}
\theoremstyle{definition}
\newtheorem{example}[theorem]{Example}
\newtheorem{definition}[theorem]{Definition}
\newtheorem*{remark*}{Remark}
\newtheorem*{definition*}{Definition}
\newtheorem*{example*}{Example}
\newtheorem*{namedtheorem}{\theoremname}
\newcommand{\theoremname}{testing}
\newenvironment{named}[1]{\renewcommand{\theoremname}{#1}\begin{namedtheorem}}{\end{namedtheorem}}
\theoremstyle{remark}
\newcommand{\BC}{\mathbb C} \newcommand{\BH}{\mathbb H}
\newcommand{\BR}{\mathbb R} \newcommand{\BD}{\mathbb D}
\newcommand{\BN}{\mathbb N} 
\newcommand{\BS}{\mathbb S} \newcommand{\BZ}{\mathbb Z}
\newcommand{\CA}{\mathcal A} 
\newcommand{\CE}{\mathcal E} 
\newcommand{\CK}{\mathcal K} \newcommand{\CL}{\mathcal L}
\newcommand{\CM}{\mathcal M} \newcommand{\CN}{\mathcal N}
 \newcommand{\CT}{\mathcal T}
\newcommand{\actson}{\curvearrowright}
\DeclareMathOperator{\PSL}{PSL} 
\DeclareMathOperator{\Id}{Id} 
\DeclareMathOperator{\Hom}{Hom} 
\DeclareMathOperator{\tr}{Tr}
\DeclareMathOperator{\length}{length}
\DeclareMathOperator{\dist}{d}
\DeclareMathOperator{\CC}{CC}
\DeclareMathOperator{\C}{C}
\DeclareMathOperator{\dome}{Dome}
\DeclareMathOperator{\kernel}{ker}
\DeclareMathOperator{\soup}{sup}
\DeclareMathOperator{\Mod}{Mod}
\newcommand{\PML}{{\mathcal P \mathcal M \mathcal L}}
\newcommand{\Hyp}{{\mathbb H}}
\newcommand{\into}{\hookrightarrow}
\newcommand{\comment}[1]{}
\title{Extending pseudo-Anosov maps into compression bodies}
\author{Ian Biringer}
\address{\hskip-\parindent
        Department of Mathematics \\
        Yale University \\
        PO Box 208283 \\
        New Haven, CT 06520 \\
        USA}
\email{ian.biringer@yale.edu}
\author{Jesse Johnson}
\address{\hskip-\parindent
        Department of Mathematics \\
        Oklahoma State University \\
        Stillwater, OK 74078 \\
        USA}
\email{jjohnson@math.okstate.edu}
\author{Yair Minsky}
\address{\hskip-\parindent
        Department of Mathematics \\
        Yale University \\
        PO Box 208283 \\
        New Haven, CT 06520 \\
        USA}
\email{yair.minsky@yale.edu}
\begin{document}

\begin{abstract}
We show that a pseudo-Anosov map on a boundary component of an irreducible $3$-manifold has a power that partially extends to the interior if and only if its (un)stable lamination is a projective limit of meridians.  The proof is through $3 $-dimensional hyperbolic geometry, and involves an investigation of algebraic limits of convex cocompact compression bodies.
\end{abstract}

\maketitle
\section{ Introduction }
Let $M $ be a compact, orientable and irreducible $3$-manifold with a boundary component $\Sigma $ that is \emph{compressible}, i.e. the inclusion $\Sigma \into M $ is not $\pi_1 $-injective.  Recall that a \it meridian \rm is an essential, simple closed curve on $\Sigma $ that bounds an embedded disk in $M$. The closure $\Lambda (M) \subset \PML (\Sigma) $ of the set of projective measured laminations supported on meridians is called the \it limit set \rm of $M $.   The terminology comes from the fact that $\Lambda (M) $ is the smallest nonempty, closed subset of $\PML (\Sigma) $ that is invariant under the action of the group of homeomorphisms $\Sigma \to \Sigma$ that extend to $M $, see \cite {Masurmeasured}.

Our main result is the following:

\begin{theorem}
\label {Main}
Let $f : \Sigma \to \Sigma$ be a pseudo-Anosov homeomorphism of some compressible boundary component $\Sigma $ of a compact, orientable and irreducible $3 $-manifold $M $.  Then the (un)stable lamination of $f $ lies in $\Lambda (M)$ if and only if $f $ has a power that \emph{partially extends} to $M$.
\end{theorem}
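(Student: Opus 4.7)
The forward (easy) implication should proceed as follows. Suppose $f^k$ partially extends to $M$, which in the present context we take to mean that there is an embedded compression body $C \subset M$ with $\partial_+ C = \Sigma$ to which $f^k$ extends as a homeomorphism of $C$. Fix any meridian $m$ of $C$; it is automatically a meridian of $M$, and its forward iterates $f^{kn}(m)$ are again meridians of $C$. By the standard north--south dynamics of the pseudo-Anosov $f^k$ on $\PML(\Sigma)$, these iterates converge projectively to the unstable lamination of $f^k$, which coincides with that of $f$. This places the unstable lamination in $\Lambda(M)$; the stable case follows by applying the same reasoning to $f^{-1}$.

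For the converse, let $\lambda^\pm \in \PML(\Sigma)$ denote the stable and unstable laminations of $f$, and assume $[\lambda^+] \in \Lambda(M)$. Choose meridians $c_i$ with $[c_i] \to [\lambda^+]$. The first step I would take is to normalize using the north--south dynamics of $f$: for each $i$ there is an integer $k_i$ with $f^{-k_i}[c_i]$ lying in a fixed compact set $K \subset \PML(\Sigma) \setminus \{[\lambda^+], [\lambda^-]\}$, and after extracting a subsequence $f^{-k_i}[c_i] \to [\nu]$ for some $\nu \in K$. Thus each meridian is the $f^{k_i}$-image of a curve of controlled projective class, and one has organized the meridians $c_i$ into a well-behaved sequence of $f$-shifts.

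The geometric heart of the argument is to realize each $c_i$ as a short or pinched curve in a convex cocompact hyperbolic compression body $N_i$ whose outer conformal boundary is marked by $\Sigma$, with the remaining conformal data chosen so that, viewed in appropriate marking coordinates, $N_i$ arises from a bounded piece of data by acting on the $\Sigma$-side by $f^{k_i}$. Taking an algebraic limit in the sense of the paper's study of convex cocompact compression bodies, and invoking the Proposition 2.2 and Bounded Diameter Lemma referenced in the preamble to control the limit, one expects a Kleinian manifold $N_\infty$ whose underlying topology is a compression body $C$ with $\partial_+ C = \Sigma$ and whose ending data on the $\Sigma$-side encodes $\lambda^+$. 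The bounded shift between consecutive $N_i$ realized by a power of $f$ should manifest in $N_\infty$ as a self-homotopy equivalence induced by some $f^k$; a standard compression-body argument (for example, lifting homotopy equivalences of compression bodies to homeomorphisms preserving the disk set) then upgrades this to a homeomorphism of $C$, and the inclusion $C \into M$ furnishes the desired partial extension.

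The main obstacle I anticipate is the analysis of the algebraic limit. One must ensure that $N_\infty$ does not degenerate in a way that collapses the compression-body topology, nor fails to record $\lambda^+$ as part of its ending data, and one must extract genuine $f^k$-invariance of an isotopy class of compression body in $M$ from the sequence-level ``bounded shift'' data. This is precisely the Kleinian-group analysis advertised in the abstract, and I expect it to rest on structural results for the Masur domain of a compression body together with tameness and ending-lamination machinery for algebraic limits of convex cocompact groups.
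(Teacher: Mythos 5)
Your ``if'' direction is essentially the paper's (the stable/unstable naming is swapped --- forward iterates of a meridian converge to the attracting lamination --- but since the statement and partial extendability are symmetric in $f$ and $f^{-1}$, this is harmless). For the converse, your overall strategy (iterate $f$ to produce a sequence of convex cocompact hyperbolic compression bodies marked by $\Sigma$, pass to an algebraic limit, and extract from it an invariant subcompression body of $M$) is indeed the paper's strategy in outline, but the two decisive steps are exactly the ones you defer to ``the main obstacle,'' and the devices you name would not deliver them. First, the $f^k$-invariance: the ``bounded shift'' only says that if $\rho$ is a limit along a subsequence and $\rho'$ a limit along the shifted subsequence, then $f_*(\ker\rho)=\ker\rho'$; these are in general \emph{different} accumulation points, so no self-map of a single limit manifold appears. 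The paper's missing idea is dynamical: order the kernels of all accumulation points by inclusion, use a Chuckrow-type semicontinuity of kernels to show the set of \emph{minimal} kernels is finite and $f_*$-invariant, and conclude that some power of $f_*$ fixes one; your sketch contains no substitute for this. Relatedly, you never address why the limit compression body embeds in $M$ at all --- after the remarking, the kernels along the sequence are $f_*^{-k_i}$-images of $\ker(\pi_1\Sigma\to\pi_1 M)$, and a limit kernel need not be contained in $\ker(\pi_1\Sigma\to\pi_1 M)$ until one applies a further power of $f_*$ (again via Chuckrow).

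Second, nontriviality of $C$, i.e.\ non-faithfulness of the limit, is where the hypothesis $\lambda_+(f)\in\Lambda(M)$ must enter, and your proposed mechanism does not work as stated: meridians of the compression bodies are null-homotopic there, so they have no geodesic representatives to make ``short or pinched,'' and nothing forces an algebraic limit of convex cocompact structures with fixed exterior conformal data to ``record $\lambda^+$ in its ending data.'' The paper argues contrapositively and by a different route: if some accumulation point were faithful, the limit would be $\Sigma\times\BR$ with a degenerate end whose ending lamination is $\lambda_-(f)$; a pleated-surface/geometric-limit argument then shows the curves killed by $\rho_i$, namely $f^{-i}(\mathcal D(M))$, recede to infinity in $\mathcal C(\Sigma)$, and Masur--Minsky quasiconvexity of the disk set (the paper's Proposition \ref{boundeddistance}) converts this into $\lambda_+(f)\notin\Lambda(M)$. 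That curve-complex step has no counterpart in your sketch. Finally, the ``Proposition 2.2'' and ``Bounded Diameter Lemma'' you invoke are not results of this paper (they are unused environments in the preamble), so the crucial limit analysis is resting on tools that do not exist here; as written, the proposal is a plausible outline with the genuinely hard content --- finiteness of minimal kernels, the embedding into $M$, and the quasiconvexity argument forcing a nontrivial kernel --- left unproved.
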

We say $f$ \textit{partially extends} to $M$ if there is a nontrivial compression body $C \subset M$ with exterior boundary $\partial_+ C = \Sigma$ and a homeomorphism $\phi : C \rightarrow C$ such that $\phi|_{\Sigma} = f$.  A \it compression body \rm is a compact, irreducible $3 $-manifold constructed by attaching $2 $-handles to $\Sigma\times [0, 1] $ along a collection of disjoint annuli in $\Sigma \times \{ 0\} $ and $3 $-balls to any boundary components of the result that are homeomorphic to $\BS^ 2 $. We call a compression body \it nontrivial \rm if it is not homeomorphic to a trivial interval bundle.  The \it exterior boundary \rm of a nontrivial compression body is the unique boundary component that $\pi_1 $-surjects.  Note that in the construction above, the exterior boundary is $ \Sigma\times \{ 1\} $.

\subsection*{Remarks}  In the literature, maps $f \in \Mod (\Sigma) $ that do not partially extend to $M $ or have both associated laminations outside $\Lambda (M) $ are often called `generic'  \cite {Namaziheegaard}, \cite {Lackenbyattaching}.  These two conditions have slightly different uses, and in fact \cite {Lackenbyattaching} define genericity as non-extensibility while \cite {Namaziheegaard} uses the condition on laminations. Theorem \ref {Main} reconciles these definitions, and moreover indicates that it is enough to assume that, say, the stable lamination of $f $ lies outside $\Lambda (M) $.

There is no obvious way to sharpen Theorem \ref {Main}, even when $M $ is a handlebody. In Section \ref {examples}, we show that there are pseudo-Anosov maps on the boundary of a handlebody $M $ that extend partially but do not extend to homeomorphisms $M \to M$.  We also show a pseudo-Anosov map $f : \partial M \to \partial M $ can have a power that extends to $M $ without extending even partially itself. 

Observe that as partial extension is symmetric for $f $ and $f^ {- 1 } $, Theorem \ref {Main} shows that $\Lambda (M) $ contains the stable lamination of $f $ if and only if it contains the unstable lamination.  It also suffices to prove Theorem \ref {Main} for, say, the stable lamination of $f $, for otherwise one could replace $f $ with its inverse.

Bonahon \cite {Bonahoncobordism} defined a canonical \it characteristic compression body \rm in $M $ that has exterior boundary $\Sigma $.  It is nontrivial, unique up to isotopy and contains an isotope of any compression body in $M $ with the same exterior boundary.   It has the same limit set as $M $ and the same partial extensions properties for maps $ \Sigma \to \Sigma $.  So, it suffices to prove Theorem \ref {Main} when $M$ is a compression body. Note also that the uniqueness of the characteristic compression body implies that any homeomorphism of $\Sigma $ that extends to $M $ does extend partially.


\vspace{5mm}Before beginning the paper in earnest, we sketch the proof of Theorem \ref {Main}.  The inclusion of forward-looking references lets this double as an outline.

To start with, the `if' direction of Theorem \ref{Main} is trivial.  If $f^ i $ extends to a nontrivial compression body $C \subset M $, then any meridian $\gamma $ for $C $ gives sequences $(f^{ k i} (\gamma)) $ and $(f^{ -k i} (\gamma)) $ of meridians that converge to the stable and unstable laminations of $f $, respectively.   The other direction is much harder, and our argument is based in $3 $-dimensional hyperbolic geometry.  

As remarked above, we may assume that $M $ is a compression body and $\Sigma $ is its exterior boundary.  The first part of the argument is a construction:

\begin{center}
\begin{tabular}{c | c}
Input & Output \\ \hline \hline \vspace {2mm}
$\ \ \  f \in \Mod (\Sigma) \ \ \ $ & \ \  a compression body $C \subset M $ to which $f $ extends. \\
\end{tabular}
\end{center} 
This will work for any $f \in \Mod (\Sigma) $, with the caveat that $C $ may be trivial.

The set of convex cocompact hyperbolic metrics on $M $ is parameterized by the Teichm\"uller space $\CT (\partial M) = \CT (\Sigma) \times \CT (\partial M \setminus \Sigma) $; we create a sequence of such metrics by iterating $f $ on $\CT (\Sigma) $. Using a remarking trick,
we view this as a sequence of abstract compression bodies whose exterior boundaries are marked by $\Sigma $ in such a way that the associated sequence in $\CT (\Sigma) $ is constant.  The markings determine a sequence of holonomy representations $\rho_i : \pi_1 \Sigma \to \PSL (2,\BC) $, and we let $\CA_f \subset \Hom (\pi_1 \Sigma,\PSL (2,\BC) )$ be the set of its accumulation points. For each $\rho \in \CA_f $, the quotient $\BH^ 3 / \rho (\pi_1\Sigma) $ is homeomorphic to the interior of a compression body with exterior boundary
$\Sigma$ (Section 3). The kernels $\{ \ker \rho \ | \ \rho \in \CA_f \} $ are ordered by inclusion,
and we show (Section 5) that the set of minimal elements is finite and $f_* $-invariant.  Some power $f_*^ i $ then fixes each minimal $\ker \rho $, so $f ^ i$ extends to the quotients $\BH^ 3 / \rho (\pi_1\Sigma) $.   Finally, we show that one of these quotients embeds as a subcompression body $C\subset M $.

In the second part of the argument (Section 6 and 7), we show that if $f $ is pseudo-Anosov with stable lamination $\lambda_+ (f) \in\Lambda (M) $ then $C $ is nontrivial.  If it were trivial, we would have a sequence of hyperbolic compression bodies with boundaries marked by $\Sigma $ converging to a hyperbolic $ \Sigma \times \BR $.  The disk sets of these compression bodies must then go to infinity in the curve complex $\mathcal C (\Sigma) $, by Section 7. These disk sets are constructed (Section 3) to be $f^ { - 1 } $-iterates of the disk set $\mathcal D (M) $, so a final remarking implies that every \it forward \rm orbit of $f \actson \mathcal C (\Sigma) $ strays arbitrarily far from $\mathcal D (M) $.   Masur-Minsky's quasi-convexity of disk sets (see Proposition \ref{boundeddistance}) then shows that no forward orbit of $f \actson \mathcal C (\Sigma) $ can limit into the Gromov boundary of $\mathcal D (M) $.  But these orbits all limit to the support of $\lambda_+ (f)$ in $\partial_\infty \mathcal  C (\Sigma) $, and $\partial_\infty \mathcal D (M) $ consists of the supports of elements of $\Lambda (M) $.  So, it follows that $\lambda_+ (f) \notin \Lambda (M) $.



\vspace {2mm}

The authors would like to thank Dick Canary, Cyril Lecuire, Justin Malestein and Juan Souto for helpful conversations.  The first author was partially supported by NSF postdoctoral fellowship DMS-0902991, and the second was partially supported by NSF postdoctoral fellowship DMS-0602368.

\section { Preliminaries }

\label{preliminaries}

This section reviews some necessary background for our work.   It will begin with some definitions from coarse geometry. After that, we will discuss measured laminations, the curve and disk complexes, and some qualities of the action of the mapping class group $\Mod (\Sigma) $ on the curve complex.  We then transition into hyperbolic $3$-manifolds, discussing the classification of ends, the relationship between the conformal and convex core boundaries, and algebraic convergence.  Some good references for this material are \cite {Bridsonmetric}, \cite{Bleilerautomorphisms}, \cite{Thurstongeometry} and \cite {Matsuzakihyperbolic}.

\subsection{Hyperbolicity and the boundary at infinity}
Given a metric space $X$ and a base point $x \in X $, recall that the \it Gromov product \rm of two points $y, z \in X $ is defined by
$$\left < y | z \right >_x = \frac 12 \big (\dist (y, x) + \dist (z, x) - \dist (y,z) \big). $$ Then $X $ is \it $\delta $-hyperbolic \rm if for all $y,z,w \in X $ we have $$\left <y | z \right >_x \geq\min\{\, \left <y | w\right >_x, \left <z | w \right >_x \} - \delta .  $$  If $X $ is a geodesic space, this definition of $\delta $-hyperbolicity is equivalent to the condition that all geodesic triangles are $\delta $-thin \cite {Bridsonmetric}.  

A definition of Gromov assigns to each $\delta $-hyperbolic space $X $ a natural boundary $\partial_\infty X $.  Namely, a sequence $(y_i)$ in $X $ is called \it admissible \rm if $$\lim_{i,j \to \infty } \left <y_i | y_j\right > = \infty , $$  and the Gromov boundary $\partial_\infty X $ is obtained from the set of admissible sequences in $X $ by identifying two sequences if their interleave is still admissible.  One can extend the Gromov product $\left < \cdot | \cdot \right >_x $ to $X \cup \partial_\infty X $:  if $\bar y =(y_i) $ and $\bar z =(z_i) $ are two admissible sequences, then we set $\left <\bar y | \bar z \right > = \lim_{ i \to \infty} \left < y_i | z_i \right >$.  A topology on $X \cup \partial_\infty X $ extending that of $X $ can then be defined by letting, for a sequence $(y_i) \subset X \cup \partial_\infty X $ and a point $\bar z \in \partial_\infty X $,
 $$\lim_{ i \to \infty } y_i=\bar z \Longleftrightarrow \lim_{ i \to \infty } \left < y_i | \bar z \right > = \infty . $$

\subsection{ Laminations } Throughout the following, let $\Sigma $ be a closed orientable surface of genus at least $2 $ and fix a hyperbolic structure on $\Sigma $. A \it geodesic lamination \rm on $\Sigma $ is a closed subset $\lambda \subset \Sigma $ that is the union of disjoint, simple geodesics.  Geodesic laminations often carry a \it transverse measure\rm: that is, a function $$m : \{a : [0, 1] \to \Sigma\ |\ a \text { is transverse to } \lambda \, \} \longrightarrow \BR_{\geq 0 } $$ that is additive under concatenation of arcs, vanishes on arcs that do not intersect $\lambda $, and assigns two arcs the same value if they differ by an ambient isotopy of $\Sigma$ that leaves $\lambda $ invariant.  The \it support \rm of a transverse measure is the smallest geodesic lamination that carries it; a geodesic lamination equipped with a transverse measure of full support is called a \it measured lamination\rm.  

The set of all measured laminations on $\Sigma $ is written $\CM \CL (\Sigma) $ and is usually considered with the weak$^*$-topology on transverse measures.  The space $\CM \CL (\Sigma) $ admits a natural $\BR_+ $-action through scaling transverse measures.  The quotient by this action is the projective measured laminations space $\PML (\Sigma) $.  Thurston has shown \cite {Thurstongeometry} that $\PML (\Sigma) $ is homeomorphic to a sphere of dimension $6g-7 $, where $g $ is the genus of $\Sigma $.

Another result of Thurston \cite {Thurstongeometry} is that measured laminations supported on unions of closed geodesics are dense in $\CM \CL (\Sigma) $.  In fact, the (weighted) geometric intersection number of two such laminations extends continuously (again, \cite {Thurstongeometry}) to a function $$i : \CM \CL (\Sigma) \times \CM \CL (\Sigma) \to \BR_{ \geq 0 }, $$ which gives the \it intersection number \rm of two measured laminations.

A measured lamination $\lambda $ is called \it filling \rm if $i (\lambda,\mu) >0 $ for any measured lamination $\mu $ with different support. The support of a filling measured lamination on $\Sigma $ is called an \it ending lamination \rm on $\Sigma $.  The set of all ending laminations is written $\CE \CL (\Sigma) $; it is considered with the quotient topology coming from the usual weak$^*$-topology on filling measured laminations.

\subsection{ The complex of curves } As before, let $\Sigma $ be a closed orientable surface of genus at least $2 $.  The \it complex of curves \rm on $\Sigma $, written $\mathcal C (\Sigma) $, is the simplicial complex defined as follows. The vertices of $\mathcal C (\Sigma) $ correspond to homotopy classes of essential simple closed curves on $\Sigma $, and a set of vertices forms a simplex when there is a set of pairwise disjoint representative curves on $\Sigma $.  One can metrize $\mathcal C (\Sigma) $ with the path metric whose restriction to each simplex is isometric to a regular Euclidean simplex with side lengths $1 $.  

Masur and Minsky \cite{Masurgeometry1} have proven that the curve complex $\mathcal C (\Sigma) $ is $\delta$-hyperbolic.  By work of Klarreich \cite {Klarreichboundary}, its Gromov boundary $\partial_\infty \mathcal C (\Sigma) $  is homeomorphic to the space of ending laminations $\CE \CL (\Sigma) $.  To understand the topology on $\mathcal C (\Sigma) \cup \partial_\infty \mathcal C (\Sigma) $, note that a point in $\mathcal C (\Sigma) $ can be considered as a measured lamination consisting of a simple closed curve with weight $1$.

\begin{theorem}[Klarreich \cite {Klarreichboundary}] A sequence $(\gamma_i) $ in $\mathcal C (\Sigma) $ converges to an ending lamination $\lambda \in \partial_\infty \mathcal C (\Sigma) $ if and only if there are weights $c_i \in \BR $ and a transverse measure $m $ on $\lambda $ such that $c_i \gamma_i \to (\lambda,m) $ in $\CM \CL (\Sigma) $.\label {Claridge}
\end{theorem}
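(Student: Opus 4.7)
The plan is to prove the two directions separately, using as main inputs Masur--Minsky's hyperbolicity of $\mathcal{C}(\Sigma)$, the compactness of $\PML(\Sigma)$, the density of weighted simple closed curves in $\CM\CL(\Sigma)$, and the following standard coarse estimate (essentially due to Hempel): for distinct vertices $\alpha,\beta$ of $\mathcal{C}(\Sigma)$,
$$\dist_{\mathcal{C}(\Sigma)}(\alpha,\beta) \leq 2\log_2 i(\alpha,\beta) + 2.$$
The idea is that intersection in $\CM\CL(\Sigma)$ controls distance in $\mathcal{C}(\Sigma)$ in one direction, and controls the Gromov product of curves in the other direction (via a coarse projection / nearest-point construction).

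For the ``if'' direction, suppose $c_i\gamma_i \to (\lambda,m)$ with $\lambda$ filling. Because simple closed curves have bounded self-intersection while $(\lambda,m)$ fills, one shows $c_i \to 0$; hence for any fixed curve $\alpha$ that meets $\lambda$ we have $i(\gamma_i,\alpha) \to \infty$. Fixing a basepoint $\alpha_0$ and applying the Hempel estimate to $\alpha_0$ and to a geodesic segment $[\gamma_i,\gamma_j]$ in $\mathcal{C}(\Sigma)$ then yields that the Gromov product $\langle \gamma_i \mid \gamma_j\rangle_{\alpha_0}$ tends to infinity, so $(\gamma_i)$ is admissible and represents some $\bar z \in \partial_\infty\mathcal{C}(\Sigma)$. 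To identify $\bar z$ with the ending lamination $\lambda$, I would show that any two such approximating sequences to the same filling lamination have interleave that is still admissible. This is where the use of train tracks enters: a sequence $c_i\gamma_i \to (\lambda,m)$ is eventually carried by any train track neighborhood $\tau$ of $\lambda$, and two such carried sequences have bounded mutual Gromov product deficit because any train track $\tau$ carrying a filling lamination has vertex set of bounded diameter in $\mathcal{C}(\Sigma)$ (a consequence of Masur--Minsky's work on train tracks and quasi-geodesics).

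For the ``only if'' direction, suppose $\gamma_i \to \bar z \in \partial_\infty\mathcal{C}(\Sigma)$. By compactness of $\PML(\Sigma)$ there exist weights $c_i>0$ and a subsequence along which $c_i\gamma_i \to \mu \in \CM\CL(\Sigma)$. The support of $\mu$ must be filling: otherwise there is a simple closed curve $\beta$ disjoint from $\supp\mu$, so $i(\gamma_i,\beta) \cdot c_i \to 0$, and since integer intersection numbers either vanish or are bounded below, one extracts a subsequence with $i(\gamma_i,\beta)$ uniformly bounded. The Hempel estimate then forces $\dist_{\mathcal{C}}(\gamma_i,\beta)$ to be bounded, contradicting admissibility. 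Hence $\lambda := \supp \mu$ is an ending lamination. A separate argument, again via the bounded-diameter-in-$\mathcal{C}(\Sigma)$ property of train tracks, shows that the subsequential limit $\lambda$ is independent of the chosen subsequence and of the weights $c_i$, so in fact $c_i\gamma_i \to (\lambda,m)$ along the full sequence (after a suitable choice of $c_i$ and $m$).

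The main obstacle, and the heart of Klarreich's theorem, is the passage between admissibility in $\mathcal{C}(\Sigma)$ and convergence of weighted simple closed curves to a \emph{filling} lamination with a well-defined support. The intersection-to-distance bound is easy; the reverse direction---that curves which are coarsely close in $\mathcal{C}(\Sigma)$ are close in some geometric sense near the lamination---requires the train track machinery and the careful analysis that different approximating sequences to the same filling lamination yield the same Gromov boundary point. This is precisely the content of \cite{Klarreichboundary}, and a self-contained proof would essentially recapitulate that argument.
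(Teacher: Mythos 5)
This statement is not proved in the paper at all: it is quoted verbatim as Klarreich's theorem with a citation, so the only fair comparison is whether your sketch stands on its own. It does not; it has genuine gaps exactly at the points you yourself flag as ``the heart of Klarreich's theorem.'' First, in the ``if'' direction your use of the Hempel-type bound $\dist_{\mathcal C(\Sigma)}(\alpha,\beta)\le 2\log_2 i(\alpha,\beta)+2$ is backwards: that inequality bounds curve-complex distance \emph{above} by intersection number, so knowing $i(\gamma_i,\alpha_0)\to\infty$ gives no lower bound on $\dist(\alpha_0,\gamma_i)$, let alone on the Gromov products $\left<\gamma_i\,|\,\gamma_j\right>_{\alpha_0}$; curves with enormous intersection can be at distance $2$. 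Admissibility requires showing geodesics $[\gamma_i,\gamma_j]$ stay far from the basepoint, and no upper bound of Hempel type can produce that. Second, the train-track fact you invoke to identify the boundary point is false as stated: the set of curves carried by a (complete, recurrent) train track carrying a filling lamination has \emph{infinite} diameter in $\mathcal C(\Sigma)$, since such a track carries an open set of $\PML(\Sigma)$ and hence curve sequences tending to many different boundary points. What is true is that the \emph{vertex cycles} of a track have uniformly bounded diameter, but upgrading that to your claim requires the Masur--Minsky splitting-sequence/nesting machinery, i.e.\ precisely the argument of \cite{Klarreichboundary} that you defer to.

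Third, in the ``only if'' direction the step showing the subsequential limit fills is a non sequitur: from $c_i\, i(\gamma_i,\beta)\to 0$ you cannot extract a subsequence with $i(\gamma_i,\beta)$ bounded, because in the relevant situation $c_i\to 0$, and $i(\gamma_i,\beta)$ may diverge while remaining $o(1/c_i)$; the standard proofs handle this via a surgery or subsurface/relative-intersection argument, not via integrality of intersection numbers. Finally, even granting filling subsequential limits, you still owe the argument that all subsequential limits in $\PML(\Sigma)$ have the same support and that suitable weights make the full sequence converge in $\CM\CL(\Sigma)$; ``again via the bounded-diameter property'' does not supply this, for the reason in the previous paragraph. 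In short, your outline correctly locates the difficulties but does not resolve them, and since the paper's ``proof'' is simply the citation to Klarreich, the honest conclusion is that this result should be cited, not re-derived at this level of detail.
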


\subsection{The disc complex}  Assume that $\Sigma$ is a boundary component of some compact irreducible $3$-manifold $M $.  A \it meridian \rm on $\Sigma $ is an essential simple closed curve on $\Sigma $ that bounds an embedded disc in $M$.  The subcomplex of $\mathcal C (\Sigma) $ spanned by all meridian curves is called the \it disc complex \rm $\mathcal D  (M) $.

The following theorem of Masur and Minsky \cite {Masurquasi-convexity} is central to our work.
\begin{theorem}\label {quasi-convexity}
The disk set $\mathcal D (M) $ is a quasi-convex subset of $\mathcal C (\Sigma) $.
\end{theorem}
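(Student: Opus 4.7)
The plan is to exhibit a coarsely Lipschitz retraction $\pi : \mathcal{C}(\Sigma) \to \mathcal{D}(M)$.  Since $\mathcal{C}(\Sigma)$ is $\delta$-hyperbolic, a standard fact in coarse geometry then implies that the image $\mathcal{D}(M)$ is quasi-convex.

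To construct $\pi$, for each vertex $\gamma \in \mathcal{C}(\Sigma)$ let $\pi(\gamma)$ be a meridian $D$ whose geometric intersection number $i(D,\gamma)$ is minimal among all meridians in $M$; the ambiguity of this choice will be bounded in $\mathcal{C}(\Sigma)$.  Two properties are needed:
\begin{enumerate}
\item[(i)] if $\gamma \in \mathcal{D}(M)$ then $d_{\mathcal{C}(\Sigma)}(\gamma,\pi(\gamma)) \leq 1$;
\item[(ii)] if $\gamma,\gamma'$ are disjoint simple closed curves on $\Sigma$, then $d_{\mathcal{C}(\Sigma)}(\pi(\gamma),\pi(\gamma')) \leq K$ for some uniform $K$.
\end{enumerate}
Property (i) is immediate.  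The substance of the argument is (ii), which I would approach by \emph{disk surgery}.  Represent $\pi(\gamma) = \partial\Delta$ and $\pi(\gamma') = \partial\Delta'$ by properly embedded essential disks in $M$, isotoped into general position so that $\Delta \cap \Delta'$ consists of finitely many arcs and circles.  An outermost subarc of intersection on $\Delta'$ cuts off a subdisk, whose boundary is a meridian $D''$ obtained by band-summing $\partial\Delta$ with an arc of $\partial\Delta'$.  Using the disjointness of $\gamma$ and $\gamma'$ together with the minimality of $i(\pi(\gamma),\gamma)$, one shows that $D''$ again realizes the minimal intersection with $\gamma$, while strictly reducing the combinatorial count $|\Delta \cap \Delta'|$.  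Iterating produces a uniformly bounded sequence of meridians joining $\pi(\gamma)$ to $\pi(\gamma')$ through consecutive pairs at distance $\leq 1$ in $\mathcal{C}(\Sigma)$.

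The hard part is the bookkeeping for (ii): one must verify that intersection numbers with $\gamma$ and $\gamma'$ do not blow up under repeated surgery, and that $|\Delta \cap \Delta'|$ really does decrease in a controlled way so that the process terminates after boundedly many steps.  This is the technical heart of \cite{Masurquasi-convexity}.  Once (i) and (ii) are in hand, quasi-convexity follows quickly: given $D, D' \in \mathcal{D}(M)$ and a geodesic $[D,D']$ in $\mathcal{C}(\Sigma)$, applying $\pi$ vertex by vertex yields a coarse path in $\mathcal{D}(M)$ from $D$ to $D'$ whose consecutive vertices lie at distance $\leq K$ by (ii).  In a $\delta$-hyperbolic space this forces every point of $[D,D']$ to lie within a uniform distance of $\mathcal{D}(M)$, proving quasi-convexity.
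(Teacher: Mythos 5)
First, a point of comparison: the paper does not prove this statement at all. Theorem \ref{quasi-convexity} is quoted verbatim as a result of Masur and Minsky, with \cite{Masurquasi-convexity} cited, and it is used downstream (e.g.\ in Proposition \ref{boundeddistance}) as a black box. So the relevant question is whether your proposal constitutes an independent proof, and it does not: the step you yourself identify as ``the technical heart'' --- property (ii), the coarse well-definedness and coarse-Lipschitz behaviour of the minimal-intersection projection $\pi$, established by disk surgery with controlled bookkeeping --- is asserted and then deferred back to \cite{Masurquasi-convexity}. That is precisely the content of the theorem; an outline that cites the reference for this step is a restatement of the citation, not a proof. Moreover, the specific mechanism you propose has an unjustified (and, as stated, doubtful) claim: when you surger $\Delta$ along an outermost subdisk of $\Delta'$, the new boundary curve $D''$ contains a band running along an arc of $\partial\Delta'$, and $\partial\Delta'$ was chosen to minimize intersection with $\gamma'$, not with $\gamma$; since $\gamma'$ being disjoint from $\gamma$ gives no control on $i(\partial\Delta',\gamma)$, there is no reason the surgered meridian ``again realizes the minimal intersection with $\gamma$,'' and intersection numbers with $\gamma$ can a priori increase under the surgery. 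Taming exactly this kind of blow-up is the real difficulty, and your proposal does not address it (nor does it match the actual Masur--Minsky argument, which does not proceed by exhibiting a Lipschitz retraction of this form).

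A secondary caution: even granting (i) and (ii), your concluding sentence is too quick. A chain in $\mathcal{D}(M)$ joining $D$ to $D'$ with uniformly bounded steps and linearly many vertices does not \emph{immediately} force the geodesic $[D,D']$ into a uniform neighborhood of $\mathcal{D}(M)$; the naive comparison of a geodesic with a path of length comparable to $d(D,D')$ only yields a bound of order $\delta\log d(D,D')$. The statement you want (a coarsely Lipschitz coarse retract of a $\delta$-hyperbolic space is quasi-convex) is true and standard, but its proof requires a Morse-lemma-type bootstrap: one takes the point of $[D,D']$ farthest from $\mathcal{D}(M)$, uses that maximality to find nearby disk vertices flanking it, connects \emph{those} by a short chain, and only then applies the path-versus-geodesic estimate to get a self-improving inequality. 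So if you intend to keep this architecture, you should either cite that fact or include the bootstrap; and, more importantly, either prove (ii) in full or simply cite \cite{Masurquasi-convexity} for the theorem as the paper does.
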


 By Theorem \ref{Claridge}, its Gromov boundary $\partial_\infty \mathcal D (M) $ is the subset of $\partial_\infty \mathcal C (\Sigma) $ consisting of ending laminations that are the supports of measured laminations that are limits of weighted meridians in $\CM \CL (\Sigma) $.  In other words, $\partial_\infty \mathcal D (M) $ is the set of elements in $\partial_\infty \mathcal C (\Sigma) $ that are supports of  measured laminations in the limit set $\Lambda (M)$.

\subsection{The mapping class group and $\mathcal C (\Sigma) $} There is an isometric action $\Mod (\Sigma) \actson \mathcal C (\Sigma) $ obtained by extending the natural action on the vertices of $\mathcal C (\Sigma)$ to the higher dimensional cells.  Periodic and reducible elements of $\Mod (\Sigma) $ can easily be shown to act \it elliptically\rm, in the sense that they have a bounded orbit.  Orbits of pseudo-Anosov maps $[f] \in \Mod (\Sigma) $ are always unbounded; moreover, any forward orbit $(f^i (\gamma) \,  | \, i \in \BN ) $ converges to the attracting lamination $\lambda_+ $ of $f $, regarded as a point in $\partial_\infty \mathcal C(\Sigma)  =\CE \CL (\Sigma) $.  

In fact, Masur-Minsky \cite {Masurgeometry1} have shown the following:

\begin{lemma} \label {quasi-geodesics}
Any pseudo-Anosov mapping class $[f] \in \Mod (\Sigma) $ acts {\em hyperbolically} on $\mathcal C (\Sigma) $, meaning that every orbit $(f^ i (\gamma) \, | \, i \in \BZ) $ is a quasi-geodesic.

\end{lemma}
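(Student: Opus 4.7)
The plan is to invoke the standard trichotomy for isometries of a Gromov hyperbolic space. Since $\mathcal{C}(\Sigma)$ is $\delta$-hyperbolic by Masur--Minsky, every isometry of it is \emph{elliptic} (has a bounded orbit), \emph{parabolic} (fixes a unique point of $\partial_\infty \mathcal{C}(\Sigma)$ and admits no quasi-geodesic orbit), or \emph{hyperbolic} (fixes two points of $\partial_\infty \mathcal{C}(\Sigma)$ with every orbit a quasi-geodesic joining them). I would show $[f]$ falls in the last class by exhibiting two distinct fixed points on $\partial_\infty \mathcal{C}(\Sigma)$.

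By the discussion preceding the lemma, any forward orbit $(f^i(\gamma))_{i \in \BN}$ converges in $\mathcal{C}(\Sigma) \cup \partial_\infty \mathcal{C}(\Sigma)$ to the attracting ending lamination $\lambda_+(f)$, and applying the same fact to $f^{-1}$ gives backward convergence to the repelling lamination $\lambda_-(f)$. Any isometry of a Gromov hyperbolic space extends continuously to the boundary, so the relation $f(f^i(\gamma)) = f^{i+1}(\gamma)$ forces $f(\lambda_\pm(f)) = \lambda_\pm(f)$. These two fixed points are distinct elements of $\CE \CL(\Sigma) = \partial_\infty \mathcal{C}(\Sigma)$ because the attracting and repelling measured laminations of a pseudo-Anosov are mutually transverse and filling, hence have unequal supports.

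Two distinct boundary fixed points rule out both the elliptic case (bounded orbits cannot accumulate on $\partial_\infty \mathcal{C}(\Sigma)$) and the parabolic case (only one boundary fixed point is permitted). Hence $[f]$ must be hyperbolic, and every orbit $(f^i(\gamma))_{i \in \BZ}$ is a quasi-geodesic. The only non-routine ingredient is the Gromov trichotomy itself, which I would quote from a standard reference such as Chapter III.H of Bridson--Haefliger (or Coornaert--Delzant--Papadopoulos); every remaining step follows immediately from the convergence statements and the continuity of the $\Mod(\Sigma)$-action on $\mathcal{C}(\Sigma) \cup \partial_\infty \mathcal{C}(\Sigma)$ already recorded in this section.
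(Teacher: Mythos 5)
The paper does not prove this lemma at all: it quotes it from Masur--Minsky \cite{Masurgeometry1}, where it is established by showing directly (via train tracks and Teichm\"uller geometry) that a pseudo-Anosov acts on $\mathcal C(\Sigma)$ with translation distance bounded below by a positive constant, from which quasi-geodesicity of all orbits is immediate. Your route is genuinely different: you push Thurston's north--south dynamics on $\PML(\Sigma)$ through Klarreich's theorem (Theorem \ref{Claridge}) to get that forward and backward orbits converge to the distinct boundary points $|\lambda_+|\neq|\lambda_-|$, and then invoke the classification of isometries of a hyperbolic space. This strategy can be made to work, but as written its key step has a gap.

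The gap is the appeal to the ``standard trichotomy.'' The curve complex is locally infinite, hence not proper, and the textbook statements you cite (e.g.\ Bridson--Haefliger III.H) are proved for proper spaces or phrased in terms of fixed points rather than orbit accumulation points; note the paper itself is careful about exactly this issue, including a full proof of Proposition \ref{boundeddistance} ``to reassure the reader that the local infinitude of $\mathcal C(\Sigma)$ is not problematic.'' Moreover, as a bare syllogism ``two distinct boundary fixed points rule out elliptic and parabolic'' is false: an elliptic isometry (e.g.\ the identity) fixes every boundary point. What you actually have, and what you need, is that the orbit accumulates at two distinct boundary points, together with a properness-free statement that this forces loxodromy. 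Either cite a classification valid for arbitrary geodesic hyperbolic spaces, or argue directly, which is short: since $|\lambda_+|\neq|\lambda_-|$, the Gromov products $\langle f^n(\gamma)\,|\,f^{-m}(\gamma)\rangle_\gamma$ are bounded by some $R$ for all $n,m\geq 1$; rewriting $\dist(\gamma,f^{-m}(\gamma))=\dist(\gamma,f^m(\gamma))$ and $\dist(f^n(\gamma),f^{-m}(\gamma))=\dist(\gamma,f^{n+m}(\gamma))$, this says $\dist(\gamma,f^{n+m}(\gamma))\geq \dist(\gamma,f^n(\gamma))+\dist(\gamma,f^m(\gamma))-2R$. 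Since $\dist(\gamma,f^n(\gamma))\to\infty$ (the orbit converges to a boundary point), this near-superadditivity yields, via Fekete's lemma, a linear lower bound $\dist(\gamma,f^n(\gamma))\geq an-b$ with $a>0$, which together with the trivial upper bound $\dist(\gamma,f^n(\gamma))\leq n\,\dist(\gamma,f(\gamma))$ shows every orbit is a quasi-geodesic. With that repair your argument is a legitimate alternative to the citation; otherwise the clean course is simply to quote Masur--Minsky as the paper does.
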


The hyperbolicity of the action of a pseudo-Anosov map on $\mathcal C (\Sigma) $ combines with the quasi-convexity of disc sets (discussed in the previous section) to give the following proposition.  The statement should be no surprise to those familiar with $\delta $-hyperbolic spaces, but we include a full proof to reassure the reader that the local infinitude of $\mathcal C (\Sigma) $ is not problematic.

\begin{prop} Let $\Sigma $ be a boundary component of a compact irreducible $3$-manifold $M $, and consider a pseudo-Anosov map $f : \Sigma \to \Sigma $ with attracting lamination $\lambda_+ \in\Lambda (M) $.  Then for every $\gamma \in \mathcal C (\Sigma) $, $$\sup_{ k = 1, 2,\ldots} \dist (f^{ k} (\gamma),\mathcal D (M)) < \infty. $$ \label {boundeddistance}
\end{prop}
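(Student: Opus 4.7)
I plan to combine Masur--Minsky's quasi-convexity of $\mathcal D(M)$ (Theorem~\ref{quasi-convexity}) with the quasi-geodesic property of pseudo-Anosov orbits (Lemma~\ref{quasi-geodesics}) via a direct Gromov-product computation in the $\delta$-hyperbolic space $\mathcal C(\Sigma)$. The starting point is that the hypothesis $\lambda_+\in\Lambda(M)$ translates, by the identification of $\partial_\infty\mathcal D(M)$ given immediately after Theorem~\ref{quasi-convexity}, into the statement $\lambda_+\in\partial_\infty\mathcal D(M)$. I may therefore fix a sequence of meridians $(m_n)\subset\mathcal D(M)$ with $m_n\to\lambda_+$ in $\mathcal C(\Sigma)\cup\partial_\infty\mathcal C(\Sigma)$.

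Fix any meridian $b\in\mathcal D(M)$ as a basepoint. Concatenating a geodesic $[b,\gamma]$ with the forward orbit $(f^k(\gamma))_{k\geq 0}$ produces a $(\lambda,\epsilon)$-quasi-geodesic ray starting at $b$ and limiting to $\lambda_+$, by Lemma~\ref{quasi-geodesics}. The Morse lemma in a $\delta$-hyperbolic space, together with the fact that the Gromov product is Lipschitz in its basepoint, implies that for any point $p$ on such a ray, $\left\langle b\,|\,\lambda_+\right\rangle_p\leq C_0$ for a constant $C_0=C_0(\delta,\lambda,\epsilon)$. Specializing $p=f^k(\gamma)$ and using the elementary identity
\[
\dist(b,f^k(\gamma))-\left\langle f^k(\gamma)\,|\,m_n\right\rangle_b=\left\langle b\,|\,m_n\right\rangle_{f^k(\gamma)},
\]
and then passing $n\to\infty$, I obtain, for each $k$, some $n=n(k)$ with
\[
\left\langle f^k(\gamma)\,|\,m_n\right\rangle_b\geq \dist(b,f^k(\gamma))-C
\]
for a constant $C$ independent of $k$.

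I then finish with a standard thin-triangle estimate: if $R$ denotes the Gromov product on the left above, then the geodesics $[b,f^k(\gamma)]$ and $[b,m_n]$ $\delta$-fellow-travel for an initial segment of length at least $R$. Since $\dist(b,f^k(\gamma))$ exceeds $R$ by at most $C$, the point $f^k(\gamma)$ lies within $C+\delta$ of some $q\in[b,m_n]$. Because $b,m_n\in\mathcal D(M)$, Theorem~\ref{quasi-convexity} places $q$ in the $K$-neighborhood of $\mathcal D(M)$, whence $\dist(f^k(\gamma),\mathcal D(M))\leq C+\delta+K$ uniformly in $k$, as desired.

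The subtlety I expect to navigate is that $\mathcal C(\Sigma)$ is not locally finite, which blocks the tempting alternative of producing a quasi-geodesic ray from $b$ to $\lambda_+$ inside a neighborhood of $\mathcal D(M)$ as a subsequential limit of the geodesics $[b,m_n]$ and then appealing to Hausdorff-closeness of quasi-geodesic rays with a common ideal endpoint. The Gromov-product computation above sidesteps this issue by estimating distances directly from the numerical data of the triangles $(b,f^k(\gamma),m_n)$, which is precisely the reason flagged in the paragraph preceding the proposition for writing out a full proof rather than quoting a generic stability statement.
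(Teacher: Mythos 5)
Your argument is correct and is essentially the paper's own proof: both rest on choosing meridians $m_n\to\lambda_+$ (using $\lambda_+\in\partial_\infty\mathcal D(M)$), the quasi-geodesity of the $f$-orbit (Lemma \ref{quasi-geodesics}), a thin-triangle/Gromov-product estimate placing $f^k(\gamma)$ near a geodesic ending in $\mathcal D(M)$, and then quasi-convexity (Theorem \ref{quasi-convexity}). The difference is only bookkeeping: you base the estimate at a meridian $b$ and pick $n=n(k)$ so that $f^k(\gamma)$ is nearly on $[b,m_n]$ (which makes quasi-convexity apply to a geodesic with both endpoints meridians), whereas the paper bases at $\gamma$ and uses the triangle $(\gamma,f^i(\gamma),m_i)$, ruling out the side $[f^i(\gamma),m_i]$ via the diverging product $\left<f^i(\gamma)\,|\,m_i\right>_\gamma$; your concatenated ray's quasi-geodesic constants also depend on $\dist(b,\gamma)$, which is harmless since the bound only needs to be uniform in $k$.
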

\begin{proof}
Since $\lambda_+ \in  \Lambda (M)$, its support $[\lambda_+] $ lies in the boundary $\partial_\infty \mathcal D (M)$ of the disc complex.  So, we can choose a sequence of meridians $(m_i) \subset \mathcal D (M)$ that converges to $ [\lambda_+]$.  Since $f ^ {i } (\gamma) $ and $m_i $ converge to the same point at infinity, the Gromov product $$\left <f ^ {i } (\gamma)\, | \,m_i\right >_\gamma = \frac { 1 }  { 2 } \big(\dist (f ^ {  i } (\gamma),\gamma) +\dist (m_i,\gamma) - \dist (f ^ {  i } (\gamma), m_i)\big) $$ goes to infinity with $i $.  

Fix now some $k \in \BN $.  We claim that the distance from $f^k (\gamma) $ to $\mathcal D (M) $ is bounded above by some constant independent of $k $.  To see this, let $i > k $ and consider the geodesic triangle in $\mathcal C (\Sigma) $ with vertices $\gamma$, $f ^ {  i } (\gamma) $ and $m_i $. Since any $f$-orbit in $\mathcal C (\Sigma) $ is a quasi-geodesic (Lemma \ref {quasi-geodesics} above), the distance from $f ^ { k } (\gamma) $ to the side $[\gamma, f^ i (\gamma)] $ of this triangle is bounded above by some constant independent of $i $ and $k $.  Furthermore, this side lies in the $\delta $-neighborhood of the other two sides.  So in particular, the distance from $f ^ {  k } (\gamma) $ to the other two sides of our triangle is bounded above independent of $i $ and $k $. 

So, either $f^ {k } (\gamma) $ lies close to $[f^ i(\gamma),m_i] $ or close to $[\gamma,m_i] $.  In the latter case, Theorem \ref {quasi-convexity} ensures that the geodesic segment $[\gamma,m_i] $ stays within a bounded distance of $\mathcal D (M) $.  The former case, however, is impossible for large $i $ because the Gromov product $\left <f ^ {i } (\gamma)\, | \,m_i\right >_\gamma $ is approximated up to a uniform additive error by the distance from $\gamma $ to the geodesic segment $[f^ i(\gamma),m_i] $. So since the Gromov product goes to infinity, if $i $ is very large then $[f^ i(\gamma),m_i] $ is very far from $f ^ k (\gamma) $.  
\end{proof}

\subsection {Teichmuller space and $\Mod (\Sigma) $}
\label{Teichmullersection}
Let $\Sigma $ be a closed orientable surface of genus at least $2 $.  The Teichm\"uller space of $\Sigma $ is the quotient space
$$\CT  (\Sigma) = \big \{ \psi \ | \ \psi \text { is a conformal structure on } \Sigma  \big \} / \sim ,$$
where $\psi_1 \sim  \psi_2$ if there is a conformal homeomorphism $ h : (\Sigma, \psi_1) \to ( \Sigma , \psi_2)$ homotopic to the identity map. Here, a \it conformal structure \rm on $\Sigma $ is just a complex structure on $\Sigma $ and a homeomorphism is \it conformal \rm if it is bianalytic, but we use the conformal terminology because it is standard in the subject.  

There is a natural action of $\Mod (\Sigma) $ on $\CT (\Sigma )$, given by pushing forward conformal structures: $$\text { if } [f] \in  \Mod (\Sigma) \text { and } [\psi] \in \CT (\Sigma), \text { then } [f] \big ( \,[\psi] \, \big)=  [f_* \psi]. $$  Here, $f_*\psi $ is the conformal structure on $\Sigma $ whose charts are obtained from the charts of $\psi $ by precomposing with $f^ {-1} $.
We write this in detail because it will be important later not to confuse the action of  an element of $\Mod (\Sigma) $ on $\CT (\Sigma) $ with the action of its inverse.

\subsection{Ends and Ahlfors-Bers theory }
Our proof of Theorem \ref {Main} requires some knowledge of hyperbolic geometry, in particular the classification of ends of hyperbolic $3$-manifolds and the Ahlfors-Bers parameterization of convex cocompact hyperbolic metrics. We recall in this section the relevant parts of the theory.  A more detailed account can be found in \cite {Matsuzakihyperbolic}.

Let $N $ be a complete hyperbolic $3$-manifold with finitely generated fundamental group and no cusps.  The Tameness Theorem of Agol \cite {Agoltameness} and Calegari-Gabai \cite {Calegarishrinkwrapping} states that every end of $N $ has a neighborhood which is a topological product $\Sigma \times (0,\infty) $.  The ends of $N $ admit a geometric classification, depending on their interaction with the \it convex core \rm of $N $.  The convex core is the smallest convex submanifold $\CC (N) $ of $N $ whose inclusion into $N $ is a homotopy equivalence, and an end of $N $ is called \it convex cocompact \rm if it has a neighborhood disjoint from $\CC(N) $ and \emph{degenerate} otherwise.  

Each end $\CE $ of $N $ has an associated \it ending invariant\rm.  Assuming that $\CE $ has a neighborhood homeomorphic to $\Sigma \times (0,\infty)$, its ending invariant will either be a point in the Teichm\"uller space $\CT (\Sigma) $ or a geodesic lamination on $\Sigma $, depending on whether $\CE $ is convex cocompact or degenerate.  We refer the reader to \cite {Matsuzakihyperbolic} for a discussion of the \emph{ending lamination} associated to a degenerate end, and concentrate here on the convex cocompact case.

Assume that $N $ is the quotient of $\BH^ 3 $ by some finitely generated group $\Gamma $.  The {\em limit set} $\Lambda (\Gamma) $ is the smallest nonempty, closed subset of $\BS^ 2_{\infty } $ that is invariant under the boundary action of $\Gamma $.  Its complement is the {\em domain of discontinuity } $\Omega (\Gamma) = \BS^2_{\infty } \setminus \Lambda (\Gamma) $, which is the largest open subset of $\BS^2_{\infty } $ on which $\Gamma $ acts properly discontinuously.  In fact, $\Gamma $ acts properly discontinuously on $\Hyp^3 \cup \Omega (\Gamma) $, and the quotient is a manifold with boundary that has interior $N $ and boundary $\partial_c N =\Omega (\Gamma) / \Gamma $.  The action $\Gamma \actson \Omega (\Gamma)$ is by M\"obius transformations, so its quotient $\partial_c N $ inherits a natural conformal structure and is therefore called the \it conformal boundary \rm of $N $.  The conformal boundary compactifies precisely the convex cocompact ends of $N $;  the component of $\partial_c  N $ that faces a given convex cocompact end is its ending invariant.

One calls the manifold $N $ \emph{convex cocompact} if all of its ends are convex cocompact.  In fact, a convex cocompact hyperbolic $3$-manifold is determined up to isometry by its topology and conformal boundary.  This result is usually known as the Ahlfors-Bers  parameterization.

\begin{theorem}[Thurston, Ahlfors-Bers, see \cite{Matsuzakihyperbolic}] 
\label{AhlforsBers}
Let $N $ be a hyperbolizable $3$-manifold that is the interior of a compact $3$-manifold $\bar N$ with no torus boundary components.  Then there is a bijection $$\{ \text {convex cocompact  hyperbolic metrics on $N $} \} / \text {isotopy} \longrightarrow \CT (\partial\bar{N }), $$ induced from the map taking a convex-cocompact uniformization of $N$ to its conformal boundary.
\end{theorem}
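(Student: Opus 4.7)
My plan is to prove the bijection in three stages: well-definedness of the map, injectivity, and surjectivity. Throughout, I identify a convex cocompact hyperbolic metric on $N$ with a conjugacy class of discrete faithful representations $\Gamma \subset \PSL(2,\BC)$, where $N = \BH^3/\Gamma$.

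First, for well-definedness, I would show that if $N$ is convex cocompact, then there is a canonical homeomorphism $\partial_c N \cong \partial \bar N$ realized by the product structure of the ends; this is essentially the content of tameness together with the identification of convex cocompact ends with their conformal compactifications in the excerpt. Pushing the conformal structure from $\partial_c N = \Omega(\Gamma)/\Gamma$ across this identification produces a point in $\CT(\partial \bar N)$, and the choice of homeomorphism is canonical up to isotopy, so the image is well-defined.

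For injectivity, I would use quasi-conformal deformation theory. Suppose two convex cocompact uniformizations $\rho_1, \rho_2 : \pi_1 N \to \PSL(2,\BC)$ induce the same point in $\CT(\partial \bar N)$. Then there is a conformal homeomorphism $\partial_c N_1 \to \partial_c N_2$ isotopic to the identity through the product ends. Lifting to the domains of discontinuity yields a $(\rho_1,\rho_2)$-equivariant conformal map $\Omega(\Gamma_1) \to \Omega(\Gamma_2)$. By the Sullivan rigidity / Mostow-type argument, this equivariant map between limit set complements extends to an equivariant M\"obius transformation of $\BS^2_\infty$ (using that $\Lambda(\Gamma_i)$ has measure zero in the convex cocompact case and that conformal maps of the sphere are M\"obius), which extends to an isometry of $\BH^3$ conjugating $\Gamma_1$ to $\Gamma_2$, hence the two metrics are isotopic.

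Surjectivity is the main obstacle and breaks into two subproblems. First, I would invoke Thurston's hyperbolization theorem for Haken manifolds (which applies because $N$ is hyperbolizable by hypothesis) to produce at least one convex cocompact structure $\Gamma_0 \subset \PSL(2,\BC)$ on $N$, giving some basepoint $[\psi_0] \in \CT(\partial \bar N)$. Second, given any target $[\psi] \in \CT(\partial \bar N)$, I would realize $[\psi]$ by a quasi-conformal map $\partial_c N_0 \to (\partial \bar N, \psi)$ isotopic to the identity, pull its Beltrami differential back to a $\Gamma_0$-invariant Beltrami differential $\mu$ on $\Omega(\Gamma_0)$, and extend $\mu$ by zero across $\Lambda(\Gamma_0)$. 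Solving the Beltrami equation on $\BS^2_\infty$ by the measurable Riemann mapping theorem produces a quasi-conformal homeomorphism $w^\mu : \BS^2_\infty \to \BS^2_\infty$ such that $\Gamma := w^\mu \Gamma_0 (w^\mu)^{-1}$ is a discrete faithful subgroup of $\PSL(2,\BC)$. The new group $\Gamma$ is again convex cocompact because $w^\mu$ conjugates $\Omega(\Gamma_0)$ to $\Omega(\Gamma)$, and by construction the induced conformal structure on $\partial_c N = \Omega(\Gamma)/\Gamma$ is exactly $[\psi]$.

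The subtle point I would need to verify is that the new uniformization $\BH^3/\Gamma$ is still homeomorphic to $N$ (not some other hyperbolizable $3$-manifold with the same boundary). This follows because the equivariant quasi-conformal map $w^\mu$ extends to an equivariant homeomorphism $\BH^3 \cup \BS^2_\infty \to \BH^3 \cup \BS^2_\infty$ (by the Douady-Earle extension, or by the fact that quasi-conformal maps extend equivariantly across convex hulls), and this descends to a homeomorphism of compactified manifolds. The main technical obstacle is really the existence step: hyperbolization is used as a black box, and the measurable Riemann mapping theorem plus equivariant extension together constitute the heart of the Ahlfors-Bers construction.
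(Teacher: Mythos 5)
First, note that the paper does not prove this statement at all: it is quoted as background, with existence attributed to Thurston and the parameterization to Ahlfors--Bers via \cite{Matsuzakihyperbolic}, so there is no in-paper argument to compare against. Your outline is the standard quasi-conformal deformation proof (tameness and the conformal compactification for well-definedness, qc conjugation plus measure-zero limit sets for injectivity, Thurston hyperbolization plus the measurable Riemann mapping theorem for surjectivity), and the surjectivity half is essentially right, including the check via an equivariant extension that the deformed quotient is still homeomorphic to $N$ in the correct marking.

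There are, however, two genuine gaps. In the injectivity step you pass from a $(\rho_1,\rho_2)$-equivariant conformal map $\Omega(\Gamma_1)\to\Omega(\Gamma_2)$ directly to a M\"obius transformation ``since the limit set has measure zero,'' but the measure-zero argument only applies once you have a quasiconformal map of the whole sphere whose Beltrami differential is supported on $\Lambda(\Gamma_1)$; a conformal map defined only on the domain of discontinuity does not automatically extend continuously, let alone quasiconformally, across the limit set. The standard repair is to first produce a global equivariant quasiconformal conjugacy (via Marden's isomorphism theorem, or by noting both structures lie in the quasiconformal deformation space of a fixed convex cocompact structure and extending equivariantly, e.g.\ by Douady--Earle), and only then invoke the measure-zero/Sullivan rigidity step. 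Second, and more seriously for the precise statement at hand: your argument at best shows the two metrics differ by an isometry \emph{homotopic} to the identity whose boundary values are compatible, which gives the classical parameterization of metrics up to homotopy by the quotient $\CT(\partial\bar N)/\Mod_0(\bar N)$, where $\Mod_0(\bar N)$ is the group of isotopy classes of homeomorphisms homotopic to the identity. The theorem as stated parameterizes metrics up to \emph{isotopy} by all of $\CT(\partial\bar N)$, and upgrading ``homotopic to the identity with boundary extension isotopic to the identity'' to ``isotopic to the identity'' is a nontrivial topological step when $\partial\bar N$ is compressible (homotopy does not imply isotopy there in general); this is exactly the subtlety the paper flags at the end of Section 2.6, and your proposal does not address it.
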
 

The term \it hyperbolizable \rm means that $N $ admits some complete hyperbolic metric.  Thurston \cite {Kapovichhyperbolic} showed that a hyperbolizable $N $ admits a convex cocompact metric, while Ahlfors and Bers studied the space of all such metrics up to isotopy.  Here, two metrics on $N $ are \it isotopic \rm if there is a diffeomorphism of $N $ isotopic to the identity map that is an isometry between them.   The well-schooled reader may be uncomfortable with the fact that our space of convex cocompact hyperbolic metrics  is parameterized by $\CT (\partial \bar N)$, rather than some quotient of it.  The reason for this is that usually one considers the space of metrics up to homotopy, rather than isotopy.
\label{ends}

\subsection {Conformal boundaries and the convex core} We describe here the bilipschitz relationship between the conformal boundary of a hyperbolic $3$-manifold and the radius-$r $ boundary of its convex core.   Essentially all the ideas below come from work of Canary and Bridgeman \cite {Bridgemanthurston}, who extended  fundamental work of Epstein and Marden \cite {Epsteinconvex} to the case of $3$-manifolds with compressible boundary.

\label {conformalboundary}


We begin more generally with a hyperbolic domain $\Omega \subset \hat \BC $.  The \it Poincar\'e metric \rm is the metric on $\Omega $ defined infinitesimally by
$$| | v | |_\rho =\inf_{ v' \in T\BH^ 2 } \big \{ | | v' | |_{\BH^ 2 } \ \big | \  D f (v') = v \text { for some conformal } f : \BH^ 2 \to \Omega \big \}. $$ 
It is the unique hyperbolic metric that is conformal on $\Omega $.  We also consider the \it Thurston metric, \rm which is  conformal on $\Omega $ but not hyperbolic.  It has a similar infinitesimal expression:
$$| | v | |_T =\inf_{ v' \in T\BH^ 2 } \big \{ | | v' | |_{\BH^ 2 } \ \big | \  D f (v') = v \text { for some M\"obius } f : \BH^ 2 \to \Omega\big \} .$$  One can define a map to be M\"obius here if it takes circles to circles; alternatively, one can replace $\BH^ 2 $ with its upper half plane model and use restrictions of M\"obius maps of $\hat \BC $.  


The Poincar\'e and Thurston metrics have each been related to a third metric, the \it quasi-hyperbolic \rm metric, by  Beardon-Pommerenke \cite {Beardonpoincare} and Kulkarni-Pinkall \cite [Theorem 7.2] {Kulkarnicanonical}, respectively.  Combining their results gives

\begin{theorem}\label{metriccomparison} If $\Omega $ is a hyperbolic domain in $\hat \BC $ that has injectivity radius at least $\mu > 0 $ in the Poincar\'e metric, then
$$\frac 1 { 2\sqrt 2 (k + \frac {\pi^ 2 } { 2\mu }) } | | v | |_{T } \leq | | v | |_\rho \leq | | v | |_T , $$ where $k = 4 +\log (3 + 2\sqrt 2) \approx 5.76. $
\end{theorem}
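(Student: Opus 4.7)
The plan is to obtain the two-sided comparison by routing through the \emph{quasi-hyperbolic} metric
\[
\|v\|_q \;=\; \frac{|v|}{\dist(z,\partial\Omega)}
\]
on $\Omega$, where the denominator is Euclidean (or chordal) distance to the boundary in $\hat\BC$. The advantage of this intermediate is that each of the two metrics under consideration has already been compared to $\|\cdot\|_q$ in the literature.

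The upper bound $\|v\|_\rho \le \|v\|_T$ is immediate from the definitions: every M\"obius isomorphism $\BH^2 \to \Omega$ is, in particular, a conformal isomorphism, so the infimum defining $\|v\|_T$ is taken over a subclass of the maps defining $\|v\|_\rho$, which can only make the infimum larger. No injectivity radius hypothesis is needed for this half.

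For the lower bound, I would first invoke Kulkarni--Pinkall's Theorem~7.2 of \cite{Kulkarnicanonical}, which gives a \emph{universal} two-sided comparison between the Thurston metric and the quasi-hyperbolic metric on any hyperbolic domain in $\hat\BC$; the relevant direction is $\|v\|_T \le C_1\,\|v\|_q$ with an explicit absolute constant $C_1$ (the opposite direction amounts essentially to Koebe's $\tfrac 14$-theorem). I would then invoke Beardon--Pommerenke's comparison from \cite{Beardonpoincare}, which bounds the quasi-hyperbolic metric above by a multiple of the Poincar\'e metric with a constant that degenerates when the Poincar\'e injectivity radius shrinks: under the hypothesis $\inj_\rho \Omega \ge \mu$ their estimate rearranges into $\|v\|_q \le C_2\,\|v\|_\rho$ with $C_2 = k + \pi^2/(2\mu)$, where $k = 4 + \log(3+2\sqrt2)$ is the universal part of their constant and $\pi^2/(2\mu)$ is the contribution that blows up as $\mu \to 0$. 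Composing the two comparisons gives $\|v\|_T \le 2\sqrt2\bigl(k + \pi^2/(2\mu)\bigr)\|v\|_\rho$, equivalent to the claimed lower bound after dividing through.

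The substantive work, and the main obstacle, is bookkeeping rather than ideas: the two cited papers use slightly different conventions (Euclidean versus spherical distance to $\partial\Omega$, and different normalizations of the Thurston and Poincar\'e densities), so one has to track how these normalizations align in order to produce the precise universal factor $2\sqrt2$ out of $C_1$. Once the conventions are pinned down, the proof is simply a concatenation of the two cited estimates.
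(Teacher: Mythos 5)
Your route is exactly the paper's: the paper proves this statement simply by combining the Beardon--Pommerenke comparison of the Poincar\'e metric with the quasi-hyperbolic metric (where the $k+\pi^2/(2\mu)$ term and the injectivity-radius hypothesis enter) and Kulkarni--Pinkall's Theorem~7.2 comparing the Thurston metric with the quasi-hyperbolic metric, with the inequality $\|v\|_\rho \le \|v\|_T$ immediate since M\"obius embeddings are conformal. Your proposal matches this, down to the acknowledged bookkeeping of normalizations needed to extract the stated constants.
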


Following Canary-Bridgeman \cite {Bridgemanthurston}, let $\dome (\Omega) \subset \BH^ 3 $ be the boundary of the hyperbolic convex hull of the complement of $\Omega $ in $\hat \BC = \partial_\infty \BH^ 3 $.  Fixing $r > 0 $, we also let $\dome_r (\Omega) $ be the boundary of the radius-$r $ neighborhood of this convex hull.   There is then a $C^ 1 $ nearest point projection $$ \pi_r : \Omega \longrightarrow \dome_r (\Omega),$$ defined by taking a point $z \in\Omega $  to the first point of $\dome_r(\Omega) $ touched by an expanding family of horoballs tangent to $z $ \cite {Epsteinconvex}.  Then:

\begin{theorem} \label {domecomparison} If $\Omega $ is a hyperbolic domain in $\hat \BC $ and $v \in T\Omega $, then
$$  \min\{\sinh (r), 1\} \, | | v | |_T \  \leq\  | |D\pi_r (v) | |_{\BH^ 3 }\  \leq \ e^r  \max\{\sinh (r), 1 \} \,  | | v | |_T. $$
\end{theorem}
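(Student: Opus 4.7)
The proof proceeds by reducing to a local model via M\"obius equivariance, followed by an explicit hyperbolic-geometric computation. Both the Thurston metric $||\cdot||_T$ and the nearest-point projection $\pi_r$ are equivariant under the M\"obius group of $\hat\BC$ extended to act by isometries of $\BH^3$, so the ratio $||D\pi_r(v)||_{\BH^3}/||v||_T$ is a M\"obius invariant. By Kulkarni--Pinkall \cite{Kulkarnicanonical}, the infimum defining $||v||_T$ at a point $z \in \Omega$ is realized by a specific round disc $B_z \subset \Omega$ containing $z$. After applying a M\"obius transformation we may take $B_z$ to be the upper half-plane of $\hat\BC$ and $z = i$; then $||v||_T = ||v||_{\BH^2}$ at $z$, and $\dome(B_z)$ is the vertical half-plane $V = \{y = 0,\ t > 0\}$ in the half-space model of $\BH^3$.

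The crucial geometric observation is that $V$ serves as a supporting plane to $\dome(\Omega)$ at the projection of $z$, with $\dome_r(\Omega)$ locally sandwiched between $V$ and the equidistant surface $\dome_r(B_z)$. Indeed, the inclusion $\hat\BC \setminus \Omega \subset \hat\BC \setminus B_z$ gives $\CH(\hat\BC \setminus \Omega) \subset \CH(\hat\BC \setminus B_z)$, and the maximality of $B_z$ forces the two convex hulls to meet along a geodesic passing through the projection of $z$. Consequently, the expanding horoballs based at $z$ first touch $\dome_r(\Omega)$ at a point of $\dome_r(B_z)$, and the derivative $D\pi_r$ at $z$ is controlled by the local position of $\dome_r(\Omega)$ relative to the model $\dome_r(B_z)$. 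This supporting-plane/disc reduction, extended to the compressible-boundary setting by Bridgeman--Canary \cite{Bridgemanthurston} from the quasifuchsian argument of Epstein--Marden \cite{Epsteinconvex}, is the main conceptual step.

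The final step is an explicit computation in the half-space model. A short horoball calculation locates the first horoball based at $z = i$ meeting $\dome_r(V)$, and the two singular values of $D\pi_r$ at $z$ are read off along the $\BR$-translation direction parallel to $\partial B_z$ and along its perpendicular. Since the tangent plane to $\dome_r(\Omega)$ at $\pi_r(z)$ lies between that of $V$ (which is the infinitesimal "flat" extreme) and that of $\dome_r(B_z)$, optimizing the resulting $\sinh r$, $\cosh r$, and $e^{\pm r}$ factors over the admissible configurations yields exactly the stated extremes $\min\{\sinh r, 1\}$ and $e^r\max\{\sinh r, 1\}$.

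The main obstacle is verifying the supporting-plane correspondence when $\Omega$ has compressible boundary, since $\dome(\Omega)$ may have creases and fail to be smooth or even embedded; this is precisely what the Bridgeman--Canary extension of Epstein--Marden provides. Once the M\"obius reduction to the upper-half-plane model is in place and $\pi_r$ is identified locally with the projection to the equidistant surface of a fixed geodesic plane, the inequalities reduce to a standard exercise in hyperbolic trigonometry.
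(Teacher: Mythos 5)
Your reduction via M\"obius equivariance and the Kulkarni--Pinkall maximal disc is sound as far as it goes: normalizing so that the extremal disc $B_z$ is a half-plane, the Kulkarni--Pinkall property (that $z$ lies in the hull, inside $B_z$, of $\partial B_z\cap\partial\Omega$) does imply that the plane $V=\dome(B_z)$ is a support plane of $\CH(\hat\BC\setminus\Omega)$ at the retraction point, and that the expanding horoballs at $z$ first meet $\dome_r(\Omega)$ at a point of the equidistant surface $\dome_r(B_z)$. So your description of \emph{where} $\pi_r(z)$ lands, and of the tangency there, is correct. But the theorem is a statement about $D\pi_r$, and here the argument has a genuine gap: first-order data at the single point $\pi_r(z)$ --- the support plane and the tangent plane of $\dome_r(\Omega)$ there --- does not control the derivative of the nearest-point projection. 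The Jacobian of $\pi_r$ at $z$ depends on how the support planes (equivalently the maximal discs $B_{z'}$) vary as $z'$ moves, i.e.\ on the local bending of $\dome(\Omega)$ near the support point. A flat face and a sharply bent ridge line produce tangencies of the same first order but completely different derivatives, and it is exactly this dichotomy that the constants in the statement encode: over a face the projection stretches by a factor between $1$ and $\cosh r\le e^r$, while transverse to a ridge the relevant factor is $\sinh r$. Your final step --- ``the two singular values are read off \dots optimizing the resulting $\sinh r$, $\cosh r$, $e^{\pm r}$ factors over admissible configurations yields exactly the stated extremes'' --- asserts precisely the content of the theorem without performing the analysis; it is not a routine trigonometric exercise, because no finite-dimensional family of ``admissible configurations'' has been identified over which one could optimize.

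For comparison, the paper avoids any pointwise second-order analysis: by Lemma 4.1 of \cite{Bridgemanthurston} it suffices to treat $\Omega$ the complement of finitely many points, where $\dome(\Omega)$ is a finite union of totally geodesic faces meeting along ridge geodesics; Lemma 5.1 there identifies the Thurston metric on the preimage of a face (the projection is an isometry) and of a ridge (a Euclidean band $\BR\times[0,\theta]$), and one then compares with the intrinsic geometry of the corresponding pieces of $\dome_r(\Omega)$: equidistant surfaces over faces (scaled by at most $e^r$) and cylinder sectors $\BR\times[0,\theta\sinh r]$ over ridges. If you want to keep your pointwise framework instead, you would need genuine derivative estimates for the nearest-point retraction in terms of the bending measure, which is the hard part of \cite{Epsteinconvex} and \cite{Bridgemanthurston} that the paper deliberately sidesteps. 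Finally, the obstacle you single out --- compressible boundary --- is a red herring here: Theorem \ref{domecomparison} concerns an arbitrary hyperbolic domain $\Omega\subset\hat\BC$ with no group action at all; equivariance and compressibility only enter later, in Corollary \ref{projections}.
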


This is a variation of the main results in \cite {Bridgemanthurston}, the difference being that Canary and Bridgeman compare $\Omega $ to its dome rather than to $\dome_r (\Omega) $.  However, Theorem \ref {domecomparison} is much easier than their results and its proof avoids all the real work in their paper.  Specifically, their Lemma 4.1 shows that it suffices to prove Theorem \ref {domecomparison} when $\Omega $ is the complement of a finite set of points in $\hat \BC $.  In that case, $\dome (\Omega) $ consists of a finite number of totally geodesic faces that meet at geodesic `ridge lines'.  They show (Lemma 5.1) that $$\pi : \Omega \to \dome (\Omega) $$ is an isometry on the preimage of each face and that the preimage of a ridge line is isometric to the Euclidean product $\BR \times [0,\theta] $, where $\theta $ is the dihedral angle of that ridge.  One must then only notice that a decomposition similar to that of $\Omega $ holds for $\dome_r (\Omega) $ with its path metric: the preimage in $\dome_r (\Omega) $ of a face of $\dome (\Omega) $ under the nearest point projection is part of the surface of points at distance $r$ from that face, and the preimage of a ridge line is a sector of the cylinder consisting of points at distance $r $ from that ridge.  In the first case, the intrinsic metric is an $e^ r $-scale of that on the corresponding face of $\dome (\Omega) $; the intrinsic metric on a cylinder sector of the second case is the Euclidean product $\BR \times [0,\theta \sinh (r)] $.  Theorem \ref {domecomparison} follows easily.

To finish, note that the metrics and projections of the previous page are all preserved by any group of Mobius transformations acting on $\Omega $.  We can then combine Theorems \ref {metriccomparison} and \ref {domecomparison} in the equivariant setting:

\begin{corollary}[Poincar\'e metric vs. $\partial_r CC$] \label {projections}
Let $ N$ be a complete hyperbolic $3$-manifold and assume that every meridian curve in $\partial_c  N $ has length at least $\mu > 0 $ in the Poincar\'e metric. Fix some constant $r > 0 $ and let $$\pi_r : \partial_c  N\to \partial_r \CC ( N) $$ be the nearest point projection onto the boundary of a radius-$r $ neighborhood of the convex core of $ N $.  Then for each tangent vector $v \in T (\partial_c  N) $,
$$ k (r)\, | | v | |_\rho \ \leq \  | |D\pi_r (v) | |_ N \ \leq \ K (r,\mu)\, | | v | |_\rho, $$
where $0 \leq k (r) \leq 1$ and $1 \leq K (r,\mu) \leq \infty$.
\end{corollary}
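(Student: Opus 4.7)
The plan is to combine Theorems~\ref{metriccomparison} and \ref{domecomparison} in the equivariant setting. Both the Poincar\'e and Thurston norms on a hyperbolic domain $\Omega \subset \hat\BC$ are defined by infinitesimal formulas invariant under M\"obius transformations, and the nearest-point projection $\pi_r$ is M\"obius-equivariant as well. So setting $\Omega = \Omega(\Gamma)$ for $N = \BH^3/\Gamma$, all three objects descend to the quotient $\partial_c N = \Omega/\Gamma$, and the induced projection is precisely the nearest-point projection onto $\partial_r\CC(N)$ appearing in the statement.

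Before chaining the two theorems I first translate the meridian-length hypothesis into the Poincar\'e injectivity-radius bound required by Theorem~\ref{metriccomparison}. The regular cover $\Omega \to \partial_c N$ corresponds to the subgroup $\ker(\pi_1(\partial_c N) \to \pi_1(N))$, so essential loops in $\Omega$ project exactly to loops in $\partial_c N$ that are essential in $\partial_c N$ but null-homotopic in $N$. By the Loop Theorem any such essential null-homotopic curve is controlled by meridians, and a shortest-geodesic analysis should then force the Poincar\'e injectivity radius of $\Omega$ to be at least a constant multiple of $\mu$.

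With this in hand the rest is formal. For the lower bound, Theorem~\ref{domecomparison} together with $\|v\|_\rho \leq \|v\|_T$ from Theorem~\ref{metriccomparison} gives
$$ \|D\pi_r(v)\|_N \;\geq\; \min\{\sinh r, 1\}\,\|v\|_T \;\geq\; \min\{\sinh r, 1\}\,\|v\|_\rho, $$
so one can take $k(r) = \min\{\sinh r, 1\} \in [0,1]$. For the upper bound, chain the two theorems:
$$ \|D\pi_r(v)\|_N \;\leq\; e^r \max\{\sinh r, 1\}\,\|v\|_T \;\leq\; 2\sqrt{2}\,e^r \max\{\sinh r,1\}\Bigl(k+\tfrac{\pi^2}{2\mu}\Bigr)\,\|v\|_\rho, $$
so $K(r,\mu) = 2\sqrt{2}\,e^r\max\{\sinh r,1\}\bigl(k+\tfrac{\pi^2}{2\mu}\bigr)$; this is clearly at least $1$ and blows up as $\mu \to 0$, as required.

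The main obstacle is the injectivity-radius translation in the second paragraph: the hypothesis only directly controls lengths of simple meridians in $\partial_c N$, whereas Theorem~\ref{metriccomparison} wants a bound on \emph{every} essential loop in $\Omega$. Ruling out a short non-meridian loop in the kernel $\pi_1(\partial_c N) \to \pi_1(N)$ that could shrink the Poincar\'e injectivity radius---likely via the Loop Theorem together with a careful analysis of how shortest Poincar\'e geodesics in $\Omega$ project to $\partial_c N$---is the delicate point. Once that is settled, the explicit values of $k(r)$ and $K(r,\mu)$ fall out immediately from chaining the two preceding theorems.
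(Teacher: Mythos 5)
Your argument is exactly the paper's: the paper's entire proof of Corollary \ref{projections} is the observation that the Poincar\'e and Thurston metrics and the projection $\pi_r$ are M\"obius-equivariant, so Theorems \ref{metriccomparison} and \ref{domecomparison} descend to the quotient and can be chained, yielding precisely the constants you compute. The one step you flag---converting the meridian-length hypothesis into the injectivity-radius hypothesis of Theorem \ref{metriccomparison}---is left implicit in the paper as well, and is handled by the standard fact that a shortest compressible geodesic on $\partial_c N$ is simple and hence, by Dehn's Lemma, a meridian; since essential loops in $\Omega$ project isometrically to essential compressible loops downstairs, this gives injectivity radius at least $\mu/2$ for $\Omega$, which only adjusts the value of $K(r,\mu)$.
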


\subsection{Algebraic convergence in $\Hom (\Gamma,\PSL_2\BC) $}  A hyperbolic structure on an orientable $3$-manifold $M $ corresponds through the holonomy map to a conjugacy class of faithful representations $\pi_1 M\to \PSL_2 \BC $ with discrete and torsion free image. In this section, we briefly review the topology of $(\PSL_2 \BC )$-representation spaces.  Good references for this section are \cite {Benedettilectures} and \cite {Matsuzakihyperbolic}.

Fix a finitely generated group $\Gamma $ and consider the representation variety $\Hom (\Gamma,\PSL_2\BC) $ with its \it algebraic topology: \rm this is the usual term for the compact-open topology, the topology of pointwise convergence.  The following characterization of pre-compact sequences in $\Hom (\Gamma,\PSL_2\BC) $ is well-known and inherent in the work of Culler-Shalen \cite {Cullervarieties}, Morgan-Shalen \cite {Morganvaluations} and Otal \cite {Otalhyperbolization} on compactifications of character varieties.   We give a short proof here for completeness and because the result is elementary at heart.

\begin{lemma}[see also Proposition 4.13, \cite {Kapovichhyperbolic}]\label{boundedtraces}
Let  $\rho_i : \Gamma \to \PSL_2\BC$ be a sequence of representations with pointwise bounded traces: that is, for each $\gamma \in \Gamma $ we have $\sup_i | \tr \rho_i (\gamma) | < \infty $.  Then $(\rho_i) $ can be conjugated to be pre-compact in $\Hom (\Gamma,\PSL_2\BC) $.
\end{lemma}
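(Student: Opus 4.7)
The plan is to pass to a subsequence along which the traces $\tr\rho_i(\gamma)$ converge for every $\gamma$ (finitely many generating trace coordinates for the $\SL_2\BC$-character variety of $\Gamma$ determine the rest, by Culler--Shalen), then conjugate the $\rho_i$ to realize the limiting character as an honest algebraic limit. Fix a finite generating set $g_1, \ldots, g_n$ for $\Gamma$ and work with $\SL_2\BC$-lifts throughout (the $\pm$ sign ambiguity does not affect pre-compactness). The limit character $\chi$ is either irreducible or reducible, and the two cases are handled differently.

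\textbf{Irreducible limit.} Some pair $\alpha, \beta \in \Gamma$ has irreducible limit image. By Fricke's theorem, the conjugacy class of an irreducible pair in $\SL_2\BC \times \SL_2\BC$ is determined by the triple $(\tr A, \tr B, \tr AB)$, so I can conjugate each $\rho_i$ to make $(\rho_i(\alpha), \rho_i(\beta))$ converge to an irreducible limit $(A_\infty, B_\infty)$. For any other generator $g_k$, the three linear functionals $X \mapsto \tr X$, $X \mapsto \tr(\rho_i(\alpha) X)$, $X \mapsto \tr(\rho_i(\beta) X)$ on $M_2(\BC)$ are uniformly linearly independent, since $\{I, A_\infty, B_\infty\}$ is independent in $M_2(\BC)$ by irreducibility. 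The bounded values $\tr \rho_i(g_k), \tr \rho_i(\alpha g_k), \tr \rho_i(\beta g_k)$ therefore confine $\rho_i(g_k)$ to an affine line in $M_2(\BC)$ with bounded defining data; intersecting with the quadric $\{\det = 1\}$, which is non-tangent to this line in the limit by the same irreducibility, gives a bounded set of solutions.

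\textbf{Reducible limit.} Then $\chi$ is realized by a diagonal semisimple representation $\bar\rho_\infty(\gamma) = \mathrm{diag}(\lambda_\gamma, \lambda_\gamma^{-1})$. Conjugate each $\rho_i$ so that the eigenvector lines of $\rho_i(g_1)$ lie at $0$ and $\infty$, making $\rho_i(g_1)$ diagonal. Then for each other generator $g_j$, the bounded traces $\tr \rho_i(g_j)$ and $\tr \rho_i(g_1 g_j)$ give two equations that bound the diagonal entries $a_{ij}, d_{ij}$ of $\rho_i(g_j)$ (assuming the generic case $\lambda_{g_1} \neq \pm 1$; the unipotent case is parallel), so $b_{ij} c_{ij} = a_{ij} d_{ij} - 1$ is bounded, where $b_{ij}, c_{ij}$ denote the off-diagonal entries. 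Bounded traces of products $\tr \rho_i(g_j g_k)$ then bound the bilinear quantities $b_{ij} c_{ik} + b_{ik} c_{ij}$; combined with the bounded products $b_{ij} c_{ij}$, a quadratic-roots argument bounds each $b_{ij} c_{ik}$, hence $(\max_j |b_{ij}|)(\max_j |c_{ij}|)$. A final conjugation by $\mathrm{diag}(t_i, t_i^{-1})$ scales $b_{ij} \mapsto t_i^2 b_{ij}$ and $c_{ij} \mapsto t_i^{-2} c_{ij}$; with the single choice $t_i^2 = \sqrt{\max_j |c_{ij}|/\max_j |b_{ij}|}$, every entry of every $\rho_i(g_j)$ is bounded.

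The main obstacle is the reducible limit, where Fricke's theorem does not pin down the representation and the off-diagonal entries are character-invisible. The remedy exploits that the residual diagonal conjugation freedom scales all off-diagonals by a single common scalar, and the trace identities on products $g_j g_k$ provide the bilinear coupling needed to bound the off-diagonals of different generators simultaneously.
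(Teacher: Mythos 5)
Your route is genuinely different from the paper's (which is geometric: it converts bounded traces into bounded displacement functions and runs an induction using convexity of the sets $\{x:\dist(x,\rho_i(\gamma)x)\le K\}$ in $\BH^3$, with no case division at all), and most of it is sound. The irreducible-limit case works: for an irreducible pair $\{I,A_\infty,B_\infty\}$ is linearly independent, and the direction $V$ of your affine line, being trace-orthogonal to $I,A_\infty,B_\infty$, satisfies $\det V\neq 0$ (a traceless $V$ with $\det V=0$ is nilpotent, and $\tr(A_\infty V)=\tr(B_\infty V)=0$ would force both matrices to preserve the line $\ker V$, contradicting irreducibility), so the quadratic in the line parameter has leading coefficient bounded away from zero and the solutions are bounded. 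The generic reducible case (pivot with limit trace $\neq\pm2$, which you should allow to be any element of $\Gamma$, not necessarily $g_1$) is also correct, and the final rebalancing by a single diagonal conjugation is exactly the right trick.

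The genuine gap is the case you dismiss with ``the unipotent case is parallel'': the reducible limit character whose values are all $\pm 2$, so that no element of $\Gamma$ can serve as a pivot. There your normalization breaks in three ways. First, when $\rho_i(g_1)$ is diagonalizable but its eigenvalues tend to $\pm1$, the diagonalizing conjugation can blow up even though the original sequence is bounded (eigenvector collapse: e.g. $\rho_i(g_1)=\exp\begin{pmatrix}0&1\\ \eta_i&0\end{pmatrix}$ with $\eta_i\to0$ and a fixed second generator), so the gauge choice itself is wrong, not just the estimates. Second, the linear system $\{a+d,\ \lambda_i a+\lambda_i^{-1}d\}$ has determinant $\lambda_i^{-1}-\lambda_i\to0$, so the diagonal entries of the other generators are no longer bounded in that gauge, and the later steps (bounding $b_{ij}c_{ij}$ via $\det=1$, then $b_{ij}c_{ik}$ via $\tr\rho_i(g_jg_k)$) lose their input. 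Third, if you instead put the pivot in parabolic normal form, the trace data only controls $a_j+d_j$ and the lower-left entries, and the residual gauge is the centralizer of a unipotent element, which shears rather than scales, so the one-parameter rebalancing that closes your argument is no longer available. The subcase where the $\rho_i$ are themselves reducible is easy (triangularize and scale the largest off-diagonal entry to $1$), but irreducible $\rho_i$ whose characters degenerate to such a totally parabolic/central character need a genuinely new argument, not a parallel one; this is precisely the degenerate stratum that the paper's geometric proof handles for free. (Two smaller patchable points: a $\PSL_2\BC$ representation need not lift to $\SL_2\BC$, so you should work with set-theoretic lifts of the finitely many elements you use, with signs fixed along a subsequence; and since you argue along subsequences, you should note that ``every subsequence has a further subsequence conjugable into a compact set'' does imply the stated precompactness of a single conjugated sequence.)
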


The absolute value of the trace of an element $A\in \PSL_2\BC $ is twice the hyperbolic cosine of the translation length $\inf_{x \in \BH^ 3} \dist(x, A x) $, so in the above it is equivalent to assume that translation lengths are pointwise bounded.
\begin {proof}
If $X $ is a finite generating set for $\Gamma $, then it suffices to show that there are points $p_i\in \BH^ 3 $ such that
$$\soup_i S^ X_i (p_i) < \infty, \text { where } S^ X_i (p_i) = \sum_{\gamma \in X} \dist \big ( p_i, \rho_i (\gamma) (p_i) \big ).$$

We will proceed by induction on $k $, so assume that $\Gamma = \langle a, b \rangle$ is $2 $-generated.  Since $(\rho_i) $ has pointwise bounded traces, we can choose some $K>0 $ larger than the translation length of any $\rho_i (a) $ or $\rho_i (b) $.  Then the following subsets of $\BH^ 3 $ are always nonempty, and it is easy to prove that they are convex.
$$A_i = \{ x \in \BH^ 3 \ | \ \dist \big (x, \rho_i (a) (x) \big ) \leq K \} $$ $$ B_i = \{ x \in \BH^ 3 \ | \ \dist \big (x, \rho_i (b) (x) \big ) \leq K \}. $$

We are done if we can show that $\soup_i \dist (A_i, B_i) < \infty $.  By assumption, there are points $x_i \in \BH^ 3$ such that $\soup_i \dist (x_i,\rho_i (a b)(x_i)) \leq L < \infty $.  This means that $\dist (\rho_i (b) (x_i), \rho_i (a^ {- 1}) (x_i)) \leq L $.  Now, we also have
$$ \dist (\rho_i (a^ {- 1})(x_i), A_i)=\dist (x_i, A_i) \text { and }\dist (\rho_i (b) (x_i), B_i) =  \dist (x_i, B_i). $$  It follows from hyperbolic geometry (see Figure \ref {boundedtracesfig}) that there is a universal upper bound for the distance from $A_i $ to the midpoint of $[x_i,\rho_i (a^ {- 1}) (x_i)].  $  This has distance at most $L $ from the midpoint of $[x_i,\rho_i (b) (x_i)] $, which has universally bounded distance to $B_i $.  Therefore, $\soup_i \dist (A_i, B_i) < \infty $.
\begin{figure}[tb]
  \begin{center}
  \includegraphics[width=3in]{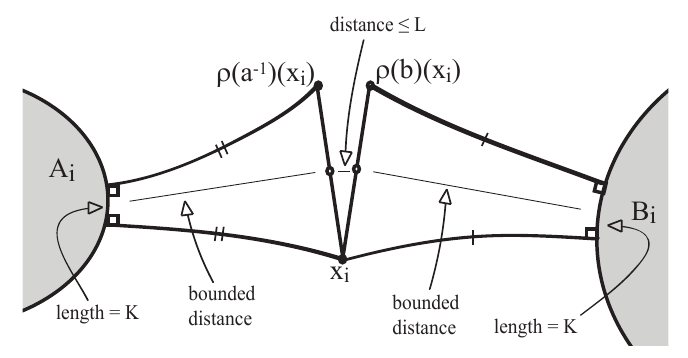}
\caption {$\dist (A_i, B_i)$ is bounded}
  \label{boundedtracesfig}
  \end{center}
\end{figure}
Returning to the general case, assume that the claim is true for $(k- 1 )$-generated groups and that $\Gamma = \langle X \rangle $ for some $k $-element set $X $.  Pick three generators $a, b, c \in X $ and three sequences $(p^ a_i), (p^ b_i),(p^c_i) \in \BH^ 3 $ such that
$$\soup_i S_i^ {X\setminus a} (p^ a_i), \ \soup_i S_i^ {X \setminus b} (p^ b_i), \ \soup_i S_i^ {X \setminus c} (p^ c_i) \ \ < \ \ \infty . $$ Then for every choice of points $(p_i^ {a, b}) $ on the geodesics $[p_i^a,p_i^ b] $, $$\soup_i S_i ^ {X \setminus \{a, b\}}(p_i^ {a, b}) < \infty ,$$ and similarly for sequences of points on the other two edges of the triangle spanned by $\{p_i^ a, p_i^ b, p_i^ c\} $.  However, elementary hyperbolic geometry tells us that there are points $p_i\in\BH^ 3 $ at uniformly bounded distance from all three sides of these triangles.  But then since $X = (X \setminus \{a, b\}) \cup (X \setminus \{b, c\}) \cup (X \setminus \{a, c\}), $ it follows that $\soup_i S_i ^ X (p_i) < \infty $.\end{proof}

Work of J{\o}rgensen and Margulis implies that the subspace $\mathcal D (\Gamma) $ of representations with discrete, torsion free and non-elementary image is closed in $\Hom (\Gamma,\PSL_2 \BC) $.  This is often referred to as Chuckrow's Theorem.  We state here a stronger version that includes a lower semi-continuity law for kernels, and include a proof because it is short and most statements of this result in the literature only apply to faithful representations.

\begin{lemma}[Chuckrow's Theorem]\label {chuckrow} Let $\Gamma $ be a finitely generated group, and let $\tau_i : \Gamma \to \PSL_2\BC $ be a sequence of discrete, torsion-free and non-elementary representations that converges algebraically to a representation $\tau $ with non-abelian image. Then $\tau $ is discrete, torsion-free and for every $\gamma \in \Gamma $, $$\gamma \in \ker \tau \Longrightarrow \gamma \in \ker\tau_i $$ for all $i $ larger than some $i_0= i_0 (\gamma) $.  In particular, when $\ker \tau $ is finitely normally generated within $\Gamma $ we have $\ker \tau \subset \ker \tau_i $ for large $i $.
\label{kernelslemma}
\end{lemma}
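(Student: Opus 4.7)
The plan is to use J{\o}rgensen's inequality, which says that for any discrete non-elementary $\langle A, B\rangle \subset \PSL_2\BC$ one has $|\tr^2(A) - 4| + |\tr([A, B]) - 2| \geq 1$. First we fix two elements $h, h' \in \Gamma$ such that $\tau(h)$ and $\tau(h')$ are loxodromic with distinct axes. This follows from the non-abelian hypothesis on $\tau(\Gamma)$: a discrete torsion-free non-abelian subgroup of $\PSL_2\BC$ is automatically non-elementary and hence contains two loxodromics with distinct axes, and the presence of torsion in the would-be limit can be excluded by a direct J{\o}rgensen argument using the torsion-freeness of each $\tau_i$. By continuity of traces, $\tau_i(h)$ and $\tau_i(h')$ are loxodromic with distinct axes for all sufficiently large $i$.

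For the kernel statement, suppose toward contradiction that $\gamma \in \ker \tau$ but $\tau_{i_k}(\gamma) \neq \Id$ along a subsequence. Since $\tau_{i_k}(\gamma) \to \Id$, both $|\tr^2(\tau_{i_k}(\gamma)) - 4|$ and $|\tr([\tau_{i_k}(\gamma), \tau_{i_k}(h)]) - 2|$ tend to $0$, so J{\o}rgensen applied inside the discrete group $\tau_{i_k}(\Gamma)$ forces $\langle \tau_{i_k}(\gamma), \tau_{i_k}(h)\rangle$ to be elementary for large $k$. A discrete torsion-free elementary subgroup of $\PSL_2\BC$ containing a loxodromic is infinite cyclic with loxodromic generator, so $\tau_{i_k}(\gamma)$ is either the identity or a nontrivial power of this generator and hence itself loxodromic with the same axis as $\tau_{i_k}(h)$. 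Running the identical argument with $h'$ in place of $h$, we conclude that if $\tau_{i_k}(\gamma) \neq \Id$ then its axis coincides with both the axis of $\tau_{i_k}(h)$ and that of $\tau_{i_k}(h')$, contradicting the fact that these axes are distinct for large $k$. Hence $\tau_{i_k}(\gamma) = \Id$, the desired contradiction.

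Discreteness and torsion-freeness of $\tau$ then follow. If $\tau$ were non-discrete, take $\alpha_n \in \Gamma$ with $\Id \neq \tau(\alpha_n) \to \Id$ and choose $i_n \to \infty$ so that $\tau_{i_n}(\alpha_n)$ is both nontrivial and within distance $1/n$ of $\tau(\alpha_n)$, hence tends to $\Id$; the kernel argument above inside the discrete $\tau_{i_n}(\Gamma)$ then forces $\tau_{i_n}(\alpha_n) = \Id$, contradicting its nontriviality. For torsion-freeness, if $\tau(\gamma)$ had finite order $n > 1$ then $\gamma^n \in \ker \tau$, so the kernel statement gives $\tau_i(\gamma^n) = \Id$ for large $i$, and torsion-freeness of $\tau_i$ forces $\tau_i(\gamma) = \Id$, contradicting $\tau(\gamma) \neq \Id$. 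The last assertion of the lemma is immediate by applying the kernel statement to each element of a finite normal generating set of $\ker \tau$. The most delicate point is the opening step---producing the two loxodromic elements with distinct axes in $\tau(\Gamma)$ from the non-abelian hypothesis alone, which requires ruling out degenerate limits such as infinite dihedral groups and is precisely where the torsion-freeness of the $\tau_i$ enters.
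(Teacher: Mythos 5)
Your argument is essentially the paper's: both proofs fix two elements whose images under $\tau$ are loxodromic with distinct axes, note that any $\tau_i$-image tending to the identity eventually violates J{\o}rgensen's inequality with each of them, conclude that the corresponding two-generator subgroups of the discrete, torsion-free groups $\tau_i(\Gamma)$ are elementary and hence consist of loxodromics sharing a single axis, and derive a contradiction from the distinctness of the two axes. Your bookkeeping (proving the kernel statement first and then deducing discreteness and torsion-freeness from it) differs only cosmetically from the paper's single unified contradiction, and your handling of torsion and of the finitely-normally-generated case matches the paper's.

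The one substantive comment concerns the step you yourself flag as delicate: producing $h,h'$ with $\tau(h),\tau(h')$ loxodromic with distinct axes from the non-abelian hypothesis. As written, your justification is circular: you invoke the fact that a discrete, torsion-free, non-abelian subgroup of $\PSL_2\BC$ is non-elementary and so contains such a pair, but you apply this to $\tau(\Gamma)$, whose discreteness and torsion-freeness are exactly the conclusions being proved. To be fair, the paper simply asserts the existence of $\alpha_1,\alpha_2$ with these properties and offers no argument either, so your proposal is not weaker than the source; but if you want this step airtight you would need a direct argument (using the $\tau_i$, e.g.\ another application of J{\o}rgensen's inequality) ruling out the degenerate possibilities that $\tau(\Gamma)$ contains no loxodromic element at all, or that all of its loxodromics share one axis, rather than quoting structural facts about discrete torsion-free groups that are not yet available for the limit.
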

\begin{proof}
If $\tau $ is indiscrete, has torsion or violates the condition on kernels, there are sequences  $\gamma_i \in \Gamma $ and $ n_i \to \infty $ such that
\begin {enumerate}
\item for sufficiently large $i $, we have $\tau_{n_i} (\gamma_i) \neq \Id $
\item $\lim_{ i \to \infty } \tau_{n_i} (\gamma_i) = \Id.$
\end {enumerate} 
Note that if there is some $\gamma \in \Gamma$ such that $\tau (\gamma) $ is $k $-torsion, we can choose $(\gamma_i) $ to be the constant sequence $\gamma^ k $ and $n_i = i $. 

 Pick two elements $\alpha_1,\alpha_2 \in \Gamma $ such that $\tau (\alpha_1) $ and $\tau (\alpha_2) $ are isometries of $\Hyp^3 $ of hyperbolic type and have distinct axis.  By $(2) $, we have for sufficiently large $i $ that each of the two pairs $\{\tau_{n_i }(\gamma),\tau_{n_i}(\alpha_k)\} $ violates the Jorgensen inequality (Theorem 2.17 in \cite{Matsuzakihyperbolic}). Therefore, both groups $\left<\tau_{n_i }(\gamma),\tau_{n_i }(\alpha_k) \right>$, $k =1,2 $, are abelian.  But for large $i $, the isometries $\tau_{n_i } (\alpha_1) $ and $\tau_{n_i }(\alpha_2) $ have different axes.  So the only way that both of these groups can be abelian is if $\tau_{n_i }(\gamma) $ is elliptic or trivial. This is a contradiction, since it is nontrivial by assumption and cannot be elliptic since $\tau_{n_i }$ is discrete and torsion-free.
\end{proof}
\section{ Examples }
\label{examples}

We construct here two examples that show that Theorem~\ref{Main} is sharp.  Both examples are pseudo-Anosov maps on the boundary of a genus 3 handlebody. The first extends partially, but not to a handlebody automorphism.  The second does not extend even partially, but its square extends to a handlebody automorphism.  Note that the difficulty here is producing pseudo-Anosov maps; without this constraint producing such examples is an easy exercise.

\begin{example}[Extending only partially]
Let $H $ be the handlebody in Figure \ref {handlebodyfigure} and let $C \subset H $ be the compression body obtained by removing a regular neighborhood of some curve in the interior of $H $ that is isotopic to $c $.  We will construct a pseudo-Anosov map on $\partial H $ that extends to $C $ but not to $H $.

The construction comes from combining the following two results.

\begin {claim}
There is a meridian $\gamma $ for $C $ such that $c $ and $\gamma $ fill $\partial H $.
\end{claim}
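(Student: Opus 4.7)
The plan is to reduce the filling condition to a lower bound on curve-complex distance, and then find a meridian of $C$ far from $c$ in $\mathcal{C}(\partial H)$. Recall that if essential simple closed curves $c$ and $\gamma$ on $\partial H$ satisfy $d_{\mathcal{C}(\partial H)}(c,\gamma)\geq 3$, then no essential simple closed curve can be disjoint from both --- otherwise such a curve would realize a length-two path between them in $\mathcal{C}(\partial H)$ --- so $\{c,\gamma\}$ automatically fills $\partial H$. It therefore suffices to produce a meridian $\gamma \in \mathcal{D}(C)$ with $d_{\mathcal{C}(\partial H)}(c,\gamma)\geq 3$.

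To produce such a $\gamma$, the plan is to first exhibit two meridians $\alpha_1,\alpha_2$ of $C$ whose union already fills $\partial H$. In the concrete handlebody of Figure~\ref{handlebodyfigure} this can be arranged explicitly: take two disjoint meridians of $H$ that miss the interior curve $\tilde c$ (so that they descend to meridians of $C$) and band-sum them along an arc in $\partial H$ chosen to wind sufficiently around the handles, so that the resulting pair of curves cuts $\partial H$ into disks. Given such a filling pair, the Thurston--Penner construction furnishes a pseudo-Anosov mapping class $\phi \in \langle T_{\alpha_1},T_{\alpha_2}\rangle \subset \Mod(\partial H)$. Since each $T_{\alpha_i}$ extends to $C$ as a Dehn twist along the associated meridian disk, so does $\phi$; hence the entire $\phi$-orbit of $\alpha_1$ consists of meridians of $C$.

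By Lemma~\ref{quasi-geodesics}, the orbit $(\phi^n(\alpha_1))_{n\in\BZ}$ is a quasi-geodesic in $\mathcal{C}(\partial H)$, so $d_{\mathcal{C}(\partial H)}(\phi^n(\alpha_1),c)\to\infty$ as $|n|\to\infty$. Choosing $n$ large enough that this distance exceeds $3$ and setting $\gamma := \phi^n(\alpha_1)$ produces the required meridian.

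The main obstacle is the initial step: exhibiting the filling pair of meridians in $C$. This is figure-dependent, and one must simultaneously verify that the chosen curves bound disks in $C$ (not merely in $H$) and that their union cuts $\partial H$ into disks. A more abstract alternative would be to bypass the filling pair entirely by invoking that $\mathcal{D}(C)$ has infinite diameter in $\mathcal{C}(\partial H)$ --- which follows, via Theorem~\ref{Claridge}, from the existence of ending laminations in $\partial_\infty \mathcal{D}(C)$ --- and then to conclude directly that some meridian of $C$ lies at distance at least $3$ from $c$.
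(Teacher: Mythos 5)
Your overall strategy is the same as the paper's: produce a pseudo-Anosov map that extends to $C$, iterate it on a meridian of $C$ so that the resulting meridians march off to infinity in $\mathcal{C}(\partial H)$, and conclude that some meridian is far enough from $c$ (distance $\geq 3$ suffices) to fill with it. The difference is in how the pseudo-Anosov extending to $C$ is obtained, and that is exactly where your argument has a genuine gap. Your main route requires, as input, an explicit pair of meridians of $C$ that already fill $\partial H$, and your construction of such a pair ("band-sum two disjoint meridians along an arc chosen to wind sufficiently around the handles") is not carried out: two disjoint meridians never fill, a single band-sum produces only one new curve so it is not even clear which pair you intend, and verifying simultaneously that the candidate curves bound disks in $C$ (not merely in $H$, i.e.\ that the disks avoid the removed curve isotopic to $c$) and that their union cuts the genus-$3$ surface into disks is a nontrivial figure-dependent check that you explicitly defer. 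This is precisely the work the paper does not leave to the picture: its Lemma \ref{fillingmeridians} needs only two meridians of $C$ with positive intersection number, one of them non-separating --- a hypothesis that really is ``readily seen'' in Figure \ref{handlebodyfigure} --- and then manufactures a filling pair of meridians abstractly, via a partial pseudo-Anosov fixing $\alpha$ and a subsurface distance estimate (\cite[Proposition 7.6]{Masurgeometry2}), before invoking Lemma \ref{fillingtwists}.

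Your proposed fallback does not close the gap either. Invoking that $\mathcal{D}(C)$ has infinite diameter ``because $\partial_\infty\mathcal{D}(C)$ contains ending laminations'' is circular at this point in the paper: nothing yet establishes that $\partial_\infty\mathcal{D}(C)$ is nonempty, and the standard ways to prove that the disk set of a nontrivial compression body has infinite diameter are either to quote a nontrivial theorem or to exhibit a pseudo-Anosov map extending to $C$ --- which is the content of Lemma \ref{fillingmeridians}, i.e.\ the very thing being used to prove the claim. The remaining steps of your argument (distance $\geq 3$ implies filling; the $\phi$-orbit of a meridian consists of meridians and is unbounded by Lemma \ref{quasi-geodesics}) are correct, but the proof is incomplete until the existence of a filling pair of meridians of $C$ --- or some equivalent source of a pseudo-Anosov extending to $C$ --- is actually established.
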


\begin{lemma}[Thurston, III.3 in Expos\'e 13 \cite{FLP}] Let $c $ and $\gamma $ be two simple closed curves that fill a surface and let $T_c $ and $T_\gamma $ be the Dehn twists around $c $ and $\gamma $. Then the composition of $T_c \circ T_\gamma^ { -1 } $ is a pseudo-Anosov map.
\label{fillingtwists}
\end{lemma}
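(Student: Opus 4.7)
The plan is to realize $f := T_c \circ T_\gamma^{-1}$ as an affine automorphism of a singular Euclidean (half-translation) structure on $\Sigma$ whose linear part is a hyperbolic element of $\SL(2,\BR)$, and then invoke the standard criterion that an affine map of a half-translation surface with hyperbolic linear part is pseudo-Anosov.

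First, I would construct the flat structure. Since $c \cup \gamma$ fills $\Sigma$, cutting along $c \cup \gamma$ decomposes $\Sigma$ into a finite collection of topological disks whose boundaries alternate between $c$-arcs and $\gamma$-arcs. Assign to each such disk the interior of a Euclidean right-angled polygon whose horizontal sides are its $c$-arcs and whose vertical sides are its $\gamma$-arcs. The consistency condition that identified sides have matching lengths is a positive linear system, and a positive solution is provided by a Perron-Frobenius eigenvector of the combinatorial intersection matrix. Choosing this solution so that every horizontal cylinder has a common modulus $n>0$ and every vertical cylinder has a common modulus $m>0$ yields a singular flat metric on $\Sigma$ in which $c$ is a simple closed horizontal geodesic, $\gamma$ is a simple closed vertical geodesic, and the cone singularities lie at points of $c \cap \gamma$.

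In this flat structure the Dehn twist $T_c$ is isotopic to a single simultaneous shear across each horizontal cylinder, which is an affine map with linear derivative
\[ A = \begin{pmatrix} 1 & n \\ 0 & 1 \end{pmatrix}; \]
similarly $T_\gamma^{-1}$ is isotopic to an affine map with linear derivative
\[ B = \begin{pmatrix} 1 & 0 \\ m & 1 \end{pmatrix}. \]
Hence $f$ is isotopic to an affine map whose linear derivative is
\[ AB = \begin{pmatrix} 1+mn & n \\ m & 1 \end{pmatrix}, \]
which has determinant $1$ and trace $2+mn > 2$. Thus $AB$ is hyperbolic with real positive eigenvalues $\lambda^{\pm 1}$, $\lambda > 1$, and with eigendirections that are neither horizontal nor vertical (by inspection of the matrix). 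These eigendirections integrate to a transverse pair of measured foliations on $\Sigma$ that are preserved by $f$ and whose transverse measures are scaled by $\lambda$ and $\lambda^{-1}$ respectively. The hyperbolic expansion/contraction forces the foliations to have no closed leaves and to be minimal, so we have exhibited the data defining a pseudo-Anosov representative of $f$.

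The main obstacle is the flat-structure construction in the second paragraph: one needs simultaneously to satisfy the length-matching identifications across all polygons and to equalize the moduli of all horizontal cylinders (and, independently, of all vertical cylinders), so that $T_c$ and $T_\gamma^{-1}$ each become honestly affine rather than merely piecewise affine. Both requirements are packaged into a single Perron-Frobenius eigenvector problem for the combinatorial intersection data of $c$ and $\gamma$; once that is solved, the remaining argument reduces to the $2\times 2$ trace computation above together with the general fact that an affine pseudo-Anosov candidate with hyperbolic linear part is genuinely pseudo-Anosov.
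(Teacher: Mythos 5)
Your strategy is exactly that of the source the paper cites for this lemma (Expos\'e 13 of \cite{FLP}); the paper gives no argument of its own, so the only thing to check is your execution of Thurston's construction, and there the key step as you state it fails. A complementary disk of $c\cup\gamma$ (curves in minimal position) with $2k$ sides cannot be realized as a Euclidean right-angled polygon with horizontal $c$-sides and vertical $\gamma$-sides unless $k=2$: Gauss--Bonnet for a flat disk all of whose interior angles are $\pi/2$ forces exactly four corners. An Euler characteristic count ($V=i(c,\gamma)$, $E=2i(c,\gamma)$, $F=2-2g+i(c,\gamma)$, and the side counts of the faces summing to $4i(c,\gamma)$) shows that on a surface of genus $g\geq 2$ some complementary disk has at least six sides, so the polygons you want do not exist. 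Moreover your placement of the singularities is inverted: the graph $c\cup\gamma$ is four-valent, so if every polygon corner had angle $\pi/2$ then every point of $c\cap\gamma$ would have total angle $2\pi$ and there would be no cone points at all at $c\cap\gamma$; allowing reflex right angles instead produces cone angles at those points that need not be multiples of $\pi$, which would ruin the half-translation structure.

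The standard fix is to dualize your decomposition: tile $\Sigma$ by Euclidean rectangles, one for each point of $c\cap\gamma$, with the arc of $c$ through that point horizontal and the arc of $\gamma$ vertical. (With a single curve in each family there is exactly one horizontal and one vertical cylinder, so the Perron--Frobenius step you invoke is not needed here -- any consistent choice of widths and heights works; PF is what equalizes moduli in the multicurve version.) In this structure each complementary $2k$-gon contributes exactly one vertex of the tiling, at which $2k$ rectangle corners meet, giving cone angle $k\pi$: regular when $k=2$, and at least $3\pi$ otherwise, so the horizontal and vertical foliations have no one-pronged singularities, while the points of $c\cap\gamma$ are nonsingular. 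With this corrected flat structure the rest of your argument is exactly Thurston's: $T_c$ and $T_\gamma^{-1}$ are affine shears of the horizontal and vertical cylinders with derivatives $\bigl(\begin{smallmatrix}1 & n\\ 0 & 1\end{smallmatrix}\bigr)$ and $\bigl(\begin{smallmatrix}1 & 0\\ m & 1\end{smallmatrix}\bigr)$, the composition has derivative of trace $2+mn>2$, and an affine homeomorphism of a half-translation surface with hyperbolic linear part is pseudo-Anosov, its invariant foliations being those in the eigendirections.
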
 
\begin{figure}[tb]
  \begin{center}
  \includegraphics[width=3in]{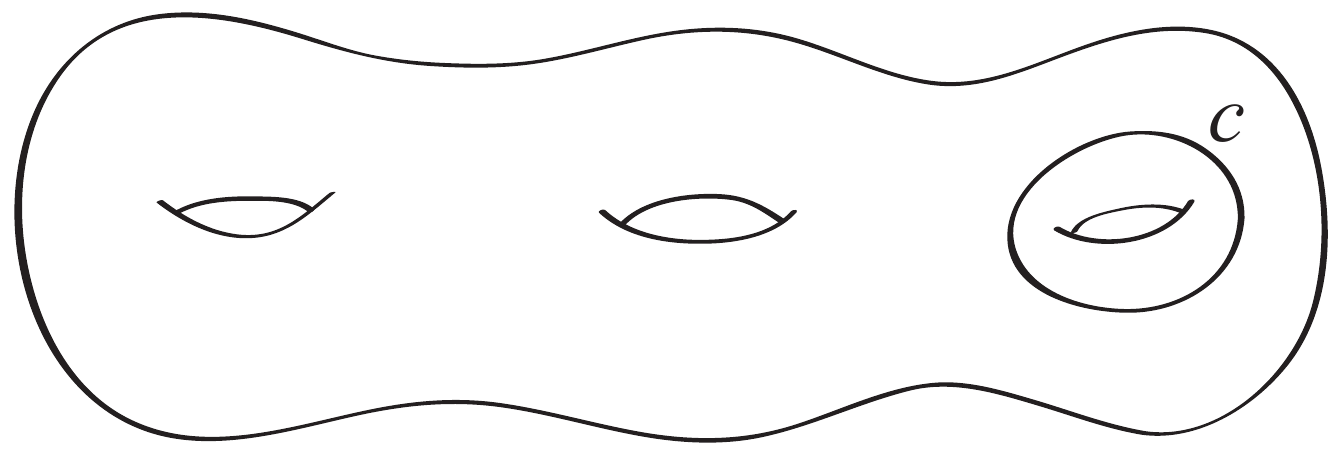}
  \caption{A handlebody $H $ and a curve $c \subset \partial H $.}
  \label{handlebodyfigure}
  \end{center}
\end{figure}

Assuming the claim, it is easy to see that the pseudo-Anosov map  $T_c \circ T_\gamma^ { - 1 } $ extends to $C $ but not to $H $.  First, both $T_c $ and $T_\gamma $ extend to $C $, so the composition does as well.  Second, note that twist $T_\gamma $ extends to $H $.   However, $T_c $ does not extend to $H $ since one can easily construct a meridian $m $ for $H $ with  $T_c (m) $ a non-meridian. So,  $T_c \circ T_\gamma^ { - 1 } $  does not extend to $H $.

It remains to prove the claim.  While there are many ways to do this we use the following lemma, which is readily seen to apply in our situation.

\begin{lemma} \label {fillingmeridians}
If $C $ is a compression body with exterior boundary $\partial_+ C $ and there is a pair of meridians $\alpha,\beta $ on $\partial_+ C $ with $i (\alpha,\beta) > 0 $ and $\alpha $ non-separating, then there is a pseudo-Anosov map of $\partial_+ C $ that extends to $C $.
\end{lemma}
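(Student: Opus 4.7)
\emph{Plan.} The strategy is to produce a pair of meridians of $C$ that fill $\partial_+ C$ and then invoke Lemma~\ref{fillingtwists} to obtain a pseudo-Anosov $T_{\alpha'}\circ T_{\beta'}^{-1}$. Since the Dehn twist about any meridian of $C$ extends to a self-homeomorphism of $C$ (twist in a collar of the bounded compressing disk), the resulting pseudo-Anosov automatically extends to $C$, giving the conclusion.

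If the pair $(\alpha,\beta)$ already fills $\partial_+ C$, then Lemma~\ref{fillingtwists} applies directly and $T_\alpha\circ T_\beta^{-1}$ is the desired pseudo-Anosov. If they do not fill, then the complement $\partial_+ C\setminus(\alpha\cup\beta)$ contains a component $R$ that is not a disk, and one must construct a new meridian $\gamma$ for which $\alpha\cup\gamma$ does fill. This is where the non-separating hypothesis on $\alpha$ is essential: cutting $\partial_+ C$ along $\alpha$ produces a connected subsurface $S_\alpha$, so all non-disk components of $\partial_+ C\setminus(\alpha\cup\beta)$ lie inside this single connected piece and any two of them can be joined by an arc inside $S_\alpha$.

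Concretely, I would start from $\beta$ and perform a disk surgery along the compressing disk $D_\alpha$: cutting $\beta$ at each point of $\alpha\cap\beta$ and regluing with arcs parallel to $D_\alpha$ yields a disjoint collection of simple closed curves in $S_\alpha$, each of which bounds a disk in $C$ obtained from $D_\beta$ by surgery along its arcs of intersection with $D_\alpha$. Band-summing these surgered meridians together along arcs in $S_\alpha$ chosen to enter every non-disk region of $\partial_+ C\setminus(\alpha\cup\beta)$ produces a single meridian $\gamma$ of $C$ such that $\partial_+ C\setminus(\alpha\cup\gamma)$ consists entirely of disks, i.e., $(\alpha,\gamma)$ fills. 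The main obstacle is the last filling verification: one must argue that the chosen band arcs and the resulting single curve subdivide every essential complementary region into disks. This is exactly where the connectedness of $S_\alpha$ — and hence non-separability of $\alpha$ — is used, since it allows a single routing of arcs through $S_\alpha$ that reaches every offending region. Once $(\alpha,\gamma)$ is a filling pair of meridians, Lemma~\ref{fillingtwists} gives that $T_\alpha\circ T_\gamma^{-1}$ is pseudo-Anosov, and by the opening paragraph this map extends to $C$, completing the proof.
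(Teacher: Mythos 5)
Your reduction is the same as the paper's: it suffices to produce a pair of meridians that fill $\partial_+ C$, since twisting about one and inverse-twisting about the other extends to $C$ and is pseudo-Anosov by Lemma \ref{fillingtwists}. The gap is in your construction of the filling pair, and it is fatal as written. The curve $\gamma$ you build lives entirely in $S_\alpha$: the surgered curves are disjoint from $\alpha$ and your band arcs are chosen inside $S_\alpha$, so $i(\alpha,\gamma)=0$. But two disjoint simple closed curves can never fill a closed surface of genus $\geq 2$: if the complement of $\alpha\cup\gamma$ consisted of disks its Euler characteristic would be positive, whereas it equals $\chi(\partial_+ C)<0$ (equivalently, a parallel copy of $\alpha$ is an essential curve missing both). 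So the pair $(\alpha,\gamma)$ you aim for cannot fill, no matter how the bands are routed. Even repairing this (say, by allowing the bands to cross $\alpha$, or by aiming for the pair $(\beta,\gamma)$ instead), the step you flag as ``the last filling verification'' is not addressed: having an arc merely \emph{enter} each non-disk complementary region does not cut that region into disks --- a single arc through a positive-genus region leaves essential curves untouched --- and you would also need to check that the bands keep the surgered disks embedded and disjoint, that the band sum is a single essential curve, and that the result still bounds a disk, none of which is argued.

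The paper avoids all of this with a different mechanism. It picks $b$ with $i(\alpha,b)=1$, observes that the boundary $b'$ of a regular neighborhood of $\alpha\cup b$ is a meridian (two parallel copies of the disk bounded by $\alpha$ banded along $b$), and then applies powers of a homeomorphism $f$ fixing $\alpha$ that is pseudo-Anosov on the complement of $\alpha$; the curves $f^i(b')$ remain meridians. Masur--Minsky's bounded-projection result \cite[Proposition 7.6]{Masurgeometry2} forces the distance between $\beta'$ (the neighborhood boundary of $\alpha\cup\beta$) and $f^i(b')$ in $\mathcal C(\partial_+ C\setminus\alpha)$ to exceed $4$ for large $i$, and a short argument then shows that $\beta$ and $f^i(b')$ fill $\partial_+ C$. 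In other words, the filling pair is produced dynamically, with the non-separating hypothesis used only to find $b$ meeting $\alpha$ once (so that $b'$ is a meridian), rather than by an explicit surgery-and-banding construction. If you want to salvage your approach, you would need to replace ``route an arc into every offending region'' by a genuine filling criterion for the final pair, which is essentially the hard content the paper delegates to the subsurface-projection machinery.
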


The lemma implies the claim, because iterating this pseudo-Anosov map on any meridian of $C $ gives a sequence of meridians that goes to infinity in the curve complex $C (\partial H) $.  In particular, there is a meridian that fills with $c $.

\begin {proof}
It suffices to show that there is a pair of meridians on $\partial_+ C $ that fill.  For twisting about one and then inverse twisting about the other gives a map that extends  to $C $ and is pseudo-Anosov by Lemma \ref {fillingtwists} above.

Let $f : \partial_+ C \to \partial_+ C $ be a homeomorphism with $f (\alpha) = \alpha $ that is pseudo-Anosov on the complement of $\alpha $.  Such maps exist by the infinite diameter of the complement's curve complex \cite {Masurgeometry1} and Lemma \ref{fillingmeridians}.  Pick a simple closed curve $b $ on $\partial_+ C $ with $i (\alpha,b) = 1 $, and let $b' $ and $\beta' $ be boundary components of regular neighborhoods of $\alpha \cup b $ and $\alpha \cup \beta $, respectively.  Then for large $i $ the distance in the curve complex $C (\partial_+ C \setminus \alpha) $ between $\beta' $ and $f^ i (b') $ is at least $5 $, by \cite [Proposition 7.6]{Masurgeometry2}.  

Note that $f^ i (b') $ is a meridian for $C $, since it is the boundary of a regular neighborhood of $\alpha \cup f^ i (b) $.  We claim that $\beta $ and $f^ i (b') $ fill $\partial_+ C $.  For if a curve $d$ on $\partial_+ C $ intersects neither of these, any boundary component of a regular neighborhood of $\alpha \cup d$ is disjoint from $f^ i (b') $ and a distance at most $2 $ from $\beta' $ in $C (\partial_+ C \setminus \alpha) $.  This can only happen if the distance between $\beta' $ and $f^ i (b') $  in $C (\partial_+ C \setminus \alpha) $ is at most four.
\end{proof}

\end{example}
\begin{example}[Squaring into $\Mod (H) $]
Let $(\Sigma, H^-, H^+)$ be the genus $3 $ Heegaard splitting for $T^ 3$, shown on the left in Figure~\ref{3torusfig}.  The second part of the figure shows an embedded torus that intersects $\Sigma $ in four loops, which appear vertically in the picture.  These loops cobound a pair of annuli in  both $H^ - $ and $H^ + $, so the composition of the Dehn twists along the four loops (in alternating directions) gives a map $f_1 \in\Mod (\Sigma) $ that extends to both handlebodies.  

\begin{figure}[htb]
  \begin{center}
  \includegraphics[width=3.5in]{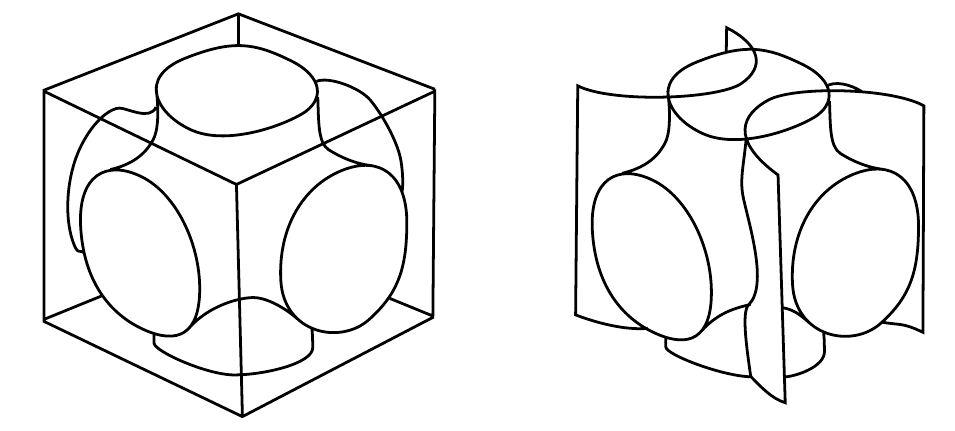}
  \caption{The left shows a Heegaard surface for the 3-torus, restricted to a cube fundamental domain.  The right shows four annuli sharing a common set of four boundary loops in the Heegaard surface.}
  \label{3torusfig}
  \end{center}
\end{figure}

There are  similar collections of loops and automorphisms $f_2, f_3 \in \Mod (\Sigma) $ for the other two coordinate axes in the 3-torus.  The union of all three collections of loops cuts $\Sigma$ into disks, so the subgroup of $\Mod (\Sigma) $ generated by $\{ f_1, f_2, f_3\} $  contains a pseudo-Anosov map $\Phi \in \Mod (\Sigma)$.  This  is essentially an application of Lemma \ref {fillingtwists}, but a clear proof is given in \cite {Ivanovsubgroups}.  Then $\Phi$ extends to both handlebodies, and therefore to an automorphism of $T^3$.

There is an order two automorphism $\Psi \in \Mod (\Sigma)$ that extends to an automorphism of $T^3$ interchanging $H^ - $ and $ H^ + $.   Extending the cube in Figure 2 to a $\BZ^ 3 $-invariant tiling of $\BR^ 3 $, the map $\Psi $ is the projection of a translation by $(\frac 12, \frac 12, \frac 12) $.  In fact, $\Psi $ commutes with $f_1, f_2 $ and $f_3 $ and thus with $\Phi $.  (To see that $\Psi$ commutes with $f_1$, say, note that  $f_1 $ is the composition of Dehn twists in alternating directions along the four loops that appear vertically in the right side of Figure \ref {3torusfig}.   The map $\Psi$ sends each of these four loops to the other of the four that is not adjacent to it, so $\Psi$ preserves the twisting curves of $f_1 $ and the directions of twisting.)

The composition $\Psi \circ \Phi \in \Mod (\Sigma)$ is pseudo-Anosov.  This is because no power $(\Psi  \circ \Phi)^ i $ can fix a simple closed curve $\gamma$ in $\Sigma $, for as $n \to \infty $, $$(\Psi \circ \Phi)^ {2ni }(\gamma) = \Phi ^{2ni }(\gamma) \  \longrightarrow \ \lambda_+ (\Phi ) \in \PML (\Sigma) . $$  The square $(\Psi \circ \Phi)^ 2$ clearly extends to both $H^ + $ and $H^ - $.  However, $\Psi \circ \Phi$ itself does not extend partially to either handlebody.  If it did, since it also interchanges the two there would be a loop on $\Sigma $ that is a meridian in both $H^ - $ and $H^ + $. This contradicts the fact that the genus 3 Heegaard splitting of $T^3$ is irreducible.  To see this, one need only note that $T^ 3 $ is irreducible and has Heegaard genus 3 (the latter follows from the fact that  $\BZ^ 3 $ is not $2 $-generated).  
\end{example}

\section{Marked compression bodies}
\label{compressionbodies}
Recall that if $\Sigma $ is a closed, orientable surface (not $\BS^ 2$), then a \it compression body \rm  is constructed by attaching $2 $-handles to $\Sigma\times [0, 1] $ along a collection of disjoint annuli in $\Sigma \times \{ 0\} $ and $3 $-balls to any boundary components of the result that are homeomorphic to $\BS^ 2 $. It is \it trivial \rm if it is homeomorphic to $\Sigma \times [0, 1] $. The \it exterior boundary \rm $\partial_+ C $ of a nontrivial compression body $C $ is the unique boundary component that $\pi_1 $-surjects; this is $\Sigma \times \{1\} $  in the construction above. The other components of $\partial C $ make up the \it interior boundary \rm $\partial_- C $ and are incompressible in $C $.  We will also sometimes refer to the exterior and interior boundaries of a trivial compression body, with the understanding that they can be chosen arbitrarily.

\begin{lemma}\label {carrying}
Let $M $ be a compact irreducible $3$-manifold with a boundary component $\Sigma $ such that the inclusion $\Sigma \into M $ is $\pi_1 $-surjective.  Then $M $ is a compression body with exterior boundary $\Sigma $. \label {surjectivecompression}
\end{lemma}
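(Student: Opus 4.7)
The plan is to proceed by compressing $\Sigma$ along a maximal disk system and then recognizing the resulting pieces as trivial $I$-bundles using the hypothesis of $\pi_1$-surjectivity. First I would dispose of the case when $\Sigma$ is incompressible in $M$. Here the loop theorem (applied to the two-sided, incompressible $\Sigma$) gives that $\pi_1 \Sigma \to \pi_1 M$ is injective, so combined with surjectivity we obtain an isomorphism. Since $M$ is compact, irreducible, and has boundary, and since $\Sigma\subset\partial M$ is an incompressible surface realizing this isomorphism, Waldhausen's theorem on Haken manifolds (applied to the natural homotopy equivalence $\Sigma\times[0,1]\to M$ that restricts to the identity on $\Sigma$) forces $M\cong\Sigma\times[0,1]$, a trivial compression body.

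In the general case, choose a maximal collection $\mathcal{D}=D_1\cup\cdots\cup D_n$ of pairwise disjoint, pairwise non-parallel compressing disks for $\Sigma$ in $M$, and let $C$ be a regular neighborhood of $\Sigma\cup\mathcal{D}$ with any $S^2$ components of $\partial_- C$ capped off by $3$-balls (these balls exist in $M$ by irreducibility). By construction $C$ is a compression body with $\partial_+ C = \Sigma$. Let $M'=\overline{M\setminus C}$, so $\partial M'=\partial_- C$; by maximality of $\mathcal{D}$, every component of $\partial_- C$ is incompressible in $M'$. My aim is to show that each component of $M'$ is a trivial $I$-bundle over its boundary, after which $M = C\cup M'$ is plainly still a compression body with exterior boundary $\Sigma$ (gluing trivial $I$-bundles onto $\partial_- C$ does not change the compression-body structure).

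To reduce to the incompressible case already handled, I need to know that the inclusion of each boundary component of $M'$ is $\pi_1$-surjective onto its component of $M'$. This is the step where the hypothesis on $\Sigma$ enters. Since $\pi_1\Sigma\twoheadrightarrow\pi_1 M$ factors through $\pi_1 C$, the inclusion $\pi_1 C\to\pi_1 M$ is surjective. Now Van Kampen / Bass--Serre expresses $\pi_1 M$ as the fundamental group of a graph of groups whose vertex groups are $\pi_1 C$ and the $\pi_1$'s of the components $V_j$ of $M'$, and whose edge groups are the $\pi_1$'s of the components of $\partial_- C$, each injecting into its neighboring vertex groups by incompressibility. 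A subgroup of such a graph-of-groups equals the whole group only if every other vertex group coincides with the subgroup generated by its adjacent edge inclusions; applying this to the surjection $\pi_1 C\twoheadrightarrow\pi_1 M$ shows that on each component $V_j$, the boundary $\partial V_j\subset\partial_- C$ is $\pi_1$-surjective. Then each component of $M'$ has incompressible, $\pi_1$-surjective boundary, so by the first paragraph it is a trivial product, completing the proof.

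The main obstacle is the Van Kampen bookkeeping in the last paragraph: when a component $V_j$ meets $\partial_- C$ in several surfaces, or when different components of $M'$ are linked together through $C$ via several edges of the graph of groups, one must argue carefully that $\pi_1$-surjectivity of the $\pi_1 C$-vertex propagates correctly down to each individual boundary component of each $V_j$. Everything else (maximality giving incompressibility of $\partial_- C$, existence of the $3$-ball cappings from irreducibility, Waldhausen in the incompressible case, and the final re-assembly as a compression body) is standard.
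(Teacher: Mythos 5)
Your proof is correct and follows essentially the same route as the paper: the paper likewise takes Bonahon's compression body $C$ built from a maximal disk system (regular neighborhood plus capping spheres), notes incompressibility of $\partial_- C$, and then invokes $\pi_1$-surjectivity of $C\into M$ together with Waldhausen's Cobordism Theorem to see that the complementary components are trivial interval bundles, which is exactly the step you flesh out by hand via van Kampen. The bookkeeping you flag does close: a cycle in the dual graph (in particular a component of $M'$ meeting $\partial_- C$ in two surfaces) would give a surjection of $\pi_1 M$ onto a nontrivial free group killing $\pi_1 C$, so the graph is a star with one edge per component $V_j$, and the amalgamated-product normal form then forces $\pi_1 S_j\to\pi_1 V_j$ to be onto for that single frontier surface, as needed for your Waldhausen step.
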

\begin {proof}
Bonahon  \cite [Section 2]{Bonahoncobordism} constructed a compression body $C \subset M $ with exterior boundary $\Sigma $ by adjoining to $\Sigma $ a maximal collection of disjoint, properly embedded discs whose boundaries are essential, nonparallel loops in $\Sigma $, and then taking a regular neighborhood and filling in any $\BS^ 2 $ boundary components.  The interior boundary $\partial_- C $ is incompressible in $M $; since $C \into M $ is $\pi_1 $-surjective, Waldhausen's Cobordism Theorem \cite {Waldhausenirreducible} then implies that the components of $M \setminus C $ are all trivial interval bundles. 
\end{proof}

We will often consider compression bodies $C $ that are \it marked \rm by a homeomorphism $\Sigma \to\partial_+ C $ from some fixed surface $\Sigma $.  Lemma \ref{surjectivecompression} indicates that compression bodies are the only manifolds whose fundamental groups can be marked from a boundary component.  In fact, marked compression bodies are determined by the kernels of their marking maps.

\begin{lemma}
Assume that $C_1, C_2 $ are compression bodies with exterior boundaries marked  by homeomorphisms $h_i: \Sigma \to \partial_+ C_i $, $i = 1, 2 $.  Then if $$\ker \big ( \xymatrix { \pi_1 \Sigma \ar[r]^{ (h_1)_*} & \pi_1 C_1 } \big) \ \subset \ \ker \big ( \xymatrix { \pi_1 \Sigma \ar[r]^{ (h_2)_*} & \pi_1 C_2 } \big),$$ the homeomorphism $h_2 \circ h_1^ { -1 } : \partial_+C_1 \to \partial_+ C_2$ extends to an embedding $C_1 \to C_2$.  If $ \ker  \ (h_1)_* = \ker \ (h_2)_* $, then this embedding is a homeomorphism.
\label{kernelsversuscompressionbodies}
\end{lemma}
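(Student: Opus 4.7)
The plan is to transfer a complete meridian system from $C_1$ to $C_2$ using the kernel hypothesis, and then extend the boundary homeomorphism piece by piece.

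Choose a complete meridian system $D_1, \ldots, D_k \subset C_1$: disjoint, properly embedded disks with essential boundary on $\partial_+ C_1$ such that cutting $C_1$ open along $\bigcup D_i$ and capping off any resulting $\BS^2$-components with $3$-balls produces a disjoint union $\bigsqcup_j S_j \times [0,1]$ of collars of the components $S_j$ of $\partial_- C_1$. Such systems exist by the construction of compression bodies. Each curve $\gamma_i := h_1^{-1}(\partial D_i)$ lies in $\ker(h_1)_* \subset \ker(h_2)_*$, so $h_2(\gamma_i)$ is null-homotopic in $C_2$. The Loop Theorem produces properly embedded disks in $C_2$ with these boundaries, and standard innermost-disk/arc surgery (using irreducibility of $C_2$, essentiality of the boundary curves, and incompressibility of $\partial_- C_2$) lets us choose them pairwise disjoint; call them $E_1, \ldots, E_k$.

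Next, extend $h_2 \circ h_1^{-1}$ to a homeomorphism between open regular neighborhoods of $\partial_+ C_1 \cup \bigcup D_i$ and $\partial_+ C_2 \cup \bigcup E_i$, matching $D_i$ with $E_i$. Let $P$ denote $C_1$ with an open regular neighborhood of $\bigcup D_i$ removed and spherical components capped, so $P \cong \bigsqcup_j S_j \times [0,1]$. The boundary map carries each $\partial(S_j \times [0,1])$ onto an embedded surface in the closure of the component $X_j$ of $C_2 \setminus N(\bigcup E_i)$ (again, with any sphere components capped) into which it is sent. Since $X_j$ is irreducible and $S_j$ is incompressible, a standard argument in 3-manifold topology (in the spirit of Waldhausen's rigidity for Haken manifolds) extends the prescribed boundary data to a proper embedding $S_j \times [0,1] \hookrightarrow X_j$. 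Reassembling these embeddings across the paired disk neighborhoods yields the desired embedding $C_1 \hookrightarrow C_2$ extending $h_2 \circ h_1^{-1}$.

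When the kernels coincide, the symmetric construction produces an embedding $C_2 \hookrightarrow C_1$ extending $h_1 \circ h_2^{-1}$. Composing yields self-embeddings of $C_1$ and of $C_2$ that restrict to the identity on the respective exterior boundary; since $\partial_+ C_i \hookrightarrow C_i$ is $\pi_1$-surjective, each composition is a $\pi_1$-isomorphism and hence, by Waldhausen's theorem, isotopic to the identity, so both original embeddings are surjective and thus homeomorphisms. The principal obstacle I expect is the piecewise extension step: one must correctly pair each product piece $S_j \times [0,1]$ of $P$ with a unique component $X_j$ of the cut-open $C_2$, and verify that the irreducibility of $X_j$ together with the incompressibility of $S_j$ suffice to promote the prescribed boundary map to a genuine embedding. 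This pairing relies on the fact that $S_j$, being incompressible in $C_1$ and hence in the image, cannot cross any $E_i$ after the cut, which isolates it inside a single $X_j$.
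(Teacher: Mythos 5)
Your opening moves match the paper's: take a complete disk system in $C_1$, use the kernel hypothesis and the Loop Theorem to find disjoint disks $E_1,\dots,E_k$ in $C_2$ with the corresponding boundaries, and extend $h_2\circ h_1^{-1}$ over a regular neighborhood. But the piecewise extension step, which you yourself flag as the crux, is set up incorrectly. You ask for a \emph{proper} embedding of each collar piece $S_j\times[0,1]$ into the cut-open component $X_j$ of $C_2$, extending ``prescribed boundary data'' on all of $\partial(S_j\times[0,1])$, and you assert that $S_j$ is ``incompressible in $C_1$ and hence in the image.'' When $\ker(h_1)_*\subsetneq\ker(h_2)_*$ both points fail: there is no prescribed data on $S_j\times\{0\}\subset\partial_- C_1$ at all, and the image of $\partial_- C_1$ should in general lie in the \emph{interior} of $C_2$, not on $\partial X_j$. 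For instance, if $C_1=\Sigma\times[0,1]$ is trivial and $C_2$ is a handlebody, then $k=0$, $X_j=C_2$, no proper embedding of $\Sigma\times[0,1]$ extending $h_2\circ h_1^{-1}$ exists, and the image of $\partial_- C_1$ is a boundary-parallel surface that is compressible in $C_2$. No Waldhausen-type rigidity is needed or available here; the correct (and much weaker) step, which is what the paper does, is simply to map each collar piece onto a regular half-neighborhood in $C_2$ of the corresponding boundary surface of the already-embedded neighborhood of $\partial_+ C_2\cup\bigcup E_i$. (Also note the $3$-ball pieces of the cut-open $C_1$ are handled by irreducibility of $C_2$; capping their sphere boundaries as you describe would produce $\BS^3$'s, not products.)

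The homeomorphism case also has a genuine gap. Composing the two embeddings gives a self-embedding $g$ of $C_1$ that is the identity on $\partial_+ C_1$ and induces the identity on $\pi_1$, but such a map need not be surjective: the collar push $(x,t)\mapsto(x,\tfrac{t+1}{2})$ of $\Sigma\times[0,1]$ is the identity on $\partial_+$, induces the identity on $\pi_1$, is isotopic to the identity through embeddings, and is not onto. So ``isotopic to the identity, hence surjective'' is not a valid inference, and invoking Waldhausen for a map not yet known to be a homeomorphism (and for manifolds with compressible boundary) is circular. The paper's route avoids this: from equality of kernels one shows the image of $\partial_- C_1$ is incompressible in $C_2$; then every component of the complement of the embedded copy of $C_1$ must be a trivial interval bundle, since otherwise $\pi_1 C_2$ would split as an amalgamated product over that surface, contradicting $\pi_1$-surjectivity of $\partial_+ C_2\hookrightarrow C_2$; finally one stretches the embedding across these product regions to obtain the homeomorphism. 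Some argument of this kind, producing the homeomorphism by modifying the embedding rather than claiming the embedding itself is onto, is needed to close your proof.
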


A \it subcompression body \rm of a compression body $C $ is a submanifold $C' \subset C $ that is a compression body such that $\partial_+ C_1 = \partial_+ C_2 $.  One can then phrase the lemma as saying that $C_1 $ embeds naturally as a subcompression body of $C_2 $.

\begin{proof}
The proof is an easy argument in $3$-manifold topology, so we will try to be brief.  To simplify notation, just consider two compression bodies $C $ and $C' $ that both have boundaries identified to $\Sigma $.  The condition on kernels is that every loop on $\Sigma $ that is null-homotopic in $C $ is also null-homotopic in $C' $.  

As in the proof of Lemma \ref {carrying}, take a maximal collection of disjoint, properly embedded discs $(\BD^2,\partial \BD^2) \into (C,\Sigma) $ whose boundaries are essential, nonparallel loops in $\Sigma $, and let $X \subset C$ be an open regular neighborhood of their union with $\Sigma$.  Then $C \setminus X $ is a union of closed $3 $-balls and the closure of a regular neighborhood of the interior boundary $\partial_- C $ of $C $. 

By assumption, the boundaries of our chosen discs are loops in $\Sigma $ that are null-homotopic in $C' $.  It follows from the Loop Theorem that there is an embedding $h : X \to C' $ that restricts to the identity on $\Sigma $.  Since $C' $ is irreducible, $h $ can be extended to an embedding on all $3 $-ball components of $C \setminus X $.  The remaining components are all trivial interval bundles, so one can map them to regular half-neighborhoods of the corresponding boundary components of $h (X) $.  This produces an embedding $h : C \to C' $.

Now assume that every loop that is null-homotopic in $C' $ is null-homotopic in $C $.  Then as $\partial_- C $ is incompressible in $C $, the surface $h (\partial_- C) $ must be incompressible in $C' $.  Every component of $C' \setminus h (C) $ must then be a trivial interval bundle, for otherwise $\pi_1 (C') $ will decompose as a free product with amalgamation along the corresponding component of $h (\partial_- C) $, preventing $\pi_1 (\Sigma) $ from surjecting onto $\pi_1 (C') $. Therefore $h $ can be stretched near $\partial_- C $ to give a homeomorphism $C \to C' $. 
\end{proof}

\subsection {Markings of hyperbolic compression bodies } Assume now that $C $ is a compression body whose interior $\mathring C $ has a complete hyperbolic metric.  Any marking $h : \Sigma \to \partial_+ C $ then combines with the holonomy map to give a discrete, torsion-free representation $$\rho : \pi_1 \Sigma \to \PSL_2 \BC$$ up to conjugacy, and a diagram that commutes up to homotopy:
$$\xymatrixcolsep{1pc}\entrymodifiers={+!!<0pt,\fontdimen22\textfont2>}\xymatrix{
\Sigma \ar [rrrr]^ { h}   \ar [d] _ -{i      }       &           &         &        &                  \partial_+ C \ar[d] \\ 
N_\rho = \BH^ 3 / \rho (\pi_1 \Sigma) \ar[rrr]^-{\cong} & & & \mathring C \ar [r]^ -\subset & C} $$
Here, $i : \Sigma \to N_\rho $ is any map in the homotopy class  determined by $$\xymatrix  { \pi_1  \Sigma \ar[rr]^-\rho & & \rho(\pi_1\Sigma) \ar[rr] ^ -\cong _* {\txt<50pt>{ \tiny defined up to conjugacy }} & & \pi_1 N_\rho }. $$
We summarize this situation by saying that $\rho : \pi_1 \Sigma \to \PSL_2 \BC$ uniformizes the interior of $C $ and is in the homotopy class of the marking $h : \Sigma \to \partial_+ C $.  

Recall that a \it Bers slice \rm is a space of convex cocompact hyperbolic metrics on $\Sigma \times \BR $ in which one component of the conformal boundary has a fixed conformal structure, while the other varies through $\CT (\Sigma) $.   Bers showed that his slices are precompact in the space of all complete hyperbolic metrics on $\Sigma \times \BR $, \cite {Matsuzakihyperbolic}.  One can define (the closure of) a `generalized Bers slice' as the space of all compression bodies with hyperbolic interior whose exterior boundaries face a convex cocompact end and have some fixed conformal structure $[X] $.  The following is a rigorous formulation of the compactness of such spaces.

\begin{theorem}[Compactness of GBS]  Let $(C_i) $ be a sequence of compression bodies with exterior boundaries marked by homeomorphisms $h_i : \Sigma \to \partial_+ C_i $.   Assume that the interior of $ C_i $ is hyperbolic and uniformized by a representation $\rho_i : \pi_1 \Sigma \to \PSL_2 \BC $ in the homotopy class of $h_i $.  

Suppose that each $\partial_+ C_i $ faces a convex cocompact end of $N_i = \BH^ 3 / \rho_i (\pi_1 \Sigma) $, and is identified with the associated component of the conformal boundary. If there is some $[X] \in\CT (\Sigma) $ with each $(h_i )^*[\partial_+ C_i]= [X] $, then $(\rho_i) $ can be conjugated to be precompact in the representation variety $\Hom (\pi_1 \Sigma,\PSL_2\BC) $. 

If $\rho: \pi_1 \Sigma \to \PSL_2\BC $ is an accumulation point of $(\rho_i) $, then $\rho $ uniformizes the interior of a compression body $C_\rho $ and there is a homeomorphism $h_\rho : \Sigma \to \partial_+ C_\rho $ in the homotopy class of $\rho $.   Moreover, $\partial_+ C_\rho $ faces a convex cocompact end of $N_\rho = \BH^ 3 / \rho (\pi_1 \Sigma)$, and the marking $h_\rho $ can be chosen so that when $\partial_+ C_\rho $ is identified with the conformal boundary of this end, we have $$(h_\rho)^*[ \partial_+ C_\rho ] = [X] \in \CT (\Sigma) . $$\label{compactness}\end{theorem}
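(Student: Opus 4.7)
The theorem splits naturally into two tasks: (i) establishing precompactness of $(\rho_i)$ in $\Hom(\pi_1\Sigma,\PSL_2\BC)$, and (ii) analyzing an arbitrary accumulation point $\rho$ by carrying the conformal lifts of the markings $h_i$ into the limit, then invoking the previously developed topological tools (Tameness, Lemma \ref{chuckrow}, Lemma \ref{surjectivecompression}) to recognize $\bar N_\rho$ as a compression body with the correct marking.

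\textbf{Precompactness.} Since $(h_i)^*[\partial_+ C_i] = [X]$, I would first replace each $h_i$ by an isotopic map so that it is a conformal isomorphism between the hyperbolic surface $(\Sigma,X)$ and $\partial_+ C_i$ equipped with its Poincar\'e metric. Because $\partial_- C_i$ is incompressible in $C_i$, every meridian of $\partial_c N_i$ lies on $\partial_+ C_i$; via $h_i^{-1}$ it is an essential simple closed curve on $(\Sigma,X)$, hence has Poincar\'e length at least the systole $\mu_0$ of $(\Sigma,X)$. So Corollary \ref{projections} applies uniformly in $i$ with $\mu = \mu_0$: for a fixed $r > 0$, the composition $\pi_r \circ h_i : \Sigma \to \partial_r\CC(N_i) \subset N_i$ is $K(r,\mu_0)$-Lipschitz. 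Consequently, for each $\gamma \in \pi_1\Sigma$ the translation length of $\rho_i(\gamma)$ in $N_i$ is bounded by $K(r,\mu_0)\,\ell_X(\gamma)$ independently of $i$, so $|\tr \rho_i(\gamma)|$ is uniformly bounded. Lemma \ref{boundedtraces} then produces conjugates $A_i\rho_i A_i^{-1}$ that are precompact in $\Hom(\pi_1\Sigma,\PSL_2\BC)$.

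\textbf{Convergence of lifts and the conformal limit.} Suppose $\rho_i \to \rho$ along a subsequence. Each marking $h_i$ lifts to a $\rho_i$-equivariant conformal embedding $\tilde h_i : \BH^2 \to \hat\BC$ onto the simply-connected component of $\Omega(\rho_i)$ corresponding to the end facing $\partial_+ C_i$. Using the residual M\"obius freedom I would normalize these so that each $\tilde h_i$ sends a fixed triple of base points of $\BH^2$ to $(0,1,\infty)$; since trace bounds are conjugation-invariant, precompactness is preserved. The $\tilde h_i$ are now univalent with prescribed three-point data, so Montel's theorem gives a locally uniformly convergent subsequence, and its limit $\tilde h : \BH^2 \to \hat\BC$ is a conformal embedding by Hurwitz. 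Passing to the limit in the equivariance relation yields $\tilde h(\gamma\cdot z) = \rho(\gamma)\,\tilde h(z)$. Therefore $\rho(\pi_1\Sigma)$ acts properly discontinuously on the disk $\tilde h(\BH^2)$ with quotient conformally $(\Sigma,X)$, and in particular has non-abelian image. Lemma \ref{chuckrow} then gives that $\rho$ is discrete and torsion-free, so $N_\rho = \BH^3/\rho(\pi_1\Sigma)$ is a hyperbolic $3$-manifold.

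\textbf{Identifying $C_\rho$.} The image $\tilde h(\BH^2)$ lies in a single component $\Omega_0$ of $\Omega(\rho)$; its topological boundary is a closed, $\rho(\pi_1\Sigma)$-invariant subset of $\hat\BC$, and because $\Lambda(\rho)$ is the unique minimal such set (non-elementary case), $\Lambda(\rho) \subseteq \partial\tilde h(\BH^2)$, forcing $\tilde h(\BH^2) = \Omega_0$. Thus $\tilde h$ descends to a conformal isomorphism $h_\rho : (\Sigma,X) \to S := \Omega_0/\rho(\pi_1\Sigma)$, which is a component of $\partial_c N_\rho$ associated to a convex cocompact end. Applying the Tameness Theorem, $N_\rho$ is the interior of a compact manifold $\bar N_\rho$ in which $S$ is the corresponding boundary component. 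Since $(h_\rho)_*$ equals $\rho$ by construction, $\pi_1 S$ surjects onto $\pi_1\bar N_\rho$, and Lemma \ref{surjectivecompression} concludes that $\bar N_\rho$ is a compression body $C_\rho$ with $\partial_+ C_\rho = S$. The remaining properties, namely that $h_\rho$ lies in the homotopy class of $\rho$ and $(h_\rho)^*[\partial_+ C_\rho] = [X]$, are immediate from the construction of $\tilde h$. I expect the main obstacle to be the simultaneous normalization in the previous paragraph: one must arrange conjugations so that both the representations and the conformal lifts converge, and rule out the limit disk being a proper invariant sub-disk of $\Omega_0$---the first handled by the conjugation-invariance of traces together with the three-point normalization, the second by minimality of the limit set.
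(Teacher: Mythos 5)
Your precompactness argument is essentially the paper's: make $h_i$ conformal/isometric for the fixed class $[X]$, use the uniform lower bound on meridian length to apply Corollary \ref{projections} with a constant independent of $i$, bound translation lengths and hence traces, and invoke Lemma \ref{boundedtraces}. That half is fine.

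The analysis of the limit, however, rests on a false premise. The component $O_i$ of $\Omega(\rho_i)$ facing $\partial_+ C_i$ is \emph{not} simply connected when $C_i$ is a nontrivial compression body: the covering $O_i \to \partial_+ C_i$ has $\pi_1 O_i \cong \ker \rho_i$, and here $\ker\rho_i = f_*^{-i}\big(\ker(\pi_1\Sigma\to\pi_1 M)\big)\neq 1$ (think of a Schottky group, whose domain of discontinuity is infinitely connected). So the lift of $h_i$ is an infinite-degree covering $\BH^2 \to O_i$, not a univalent map, and the Montel--Hurwitz step producing a limiting \emph{conformal embedding} $\tilde h$ collapses. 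Indeed no $\rho$-equivariant injection $\BH^2\to\hat\BC$ can exist unless $\rho$ is faithful (if $\gamma\in\ker\rho\setminus\{1\}$ then $\tilde h(\gamma z)=\tilde h(z)$), whereas the whole point of the theorem in this paper is to handle exactly the accumulation points with nontrivial kernel (Proposition \ref{nontrivialkernels}). Two further steps are also unjustified even granting an embedding: properness of the limit action on $\tilde h(\BH^2)$ is asserted rather than proved (local uniform convergence of the lifts does not by itself prevent the quotient from degenerating), and the claim that $\Lambda(\rho)\subseteq\partial\,\tilde h(\BH^2)$ forces $\tilde h(\BH^2)=\Omega_0$ is a non sequitur --- minimality says $\Lambda$ is contained in every closed invariant set, not that a proper invariant open subset of a component cannot have boundary containing $\Lambda$ together with extra points of $\Omega_0$. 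The paper avoids all of this by working inside the $3$-manifolds rather than on $\hat\BC$: the markings are realized as uniformly bilipschitz surfaces near $\partial_1\CC(N_i)$, their equivariant lifts to $\BH^3$ bound convex sets $A_i$, one passes to a Hausdorff limit $A_\rho$ so that the limit map is automatically a covering onto the boundary surface $\partial A_\rho$, and injectivity of the quotient map $\Sigma\to N_\rho$ (not of its lift) is extracted from Lemma \ref{shortpaths}; the statement about $(h_\rho)^*[\partial_+C_\rho]=[X]$ then comes from Carath\'eodory convergence of the components $O_i$ determined by rays orthogonal to $A_i$. Some replacement for this convex-hull argument is needed; the conformal-lift shortcut as written cannot work for compression bodies.
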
\begin {proof}
Fix some Poincar\'e metric on $\Sigma $ associated to $[X] \in \CT (\Sigma) $ and homotope the marking  maps $h_i :\Sigma \to \partial_+ C_i $ to be isometries onto the Poincar\'e metrics of their images.  Recall from Section \ref {conformalboundary} that the nearest point projection from the conformal boundary of a hyperbolic $3$-manifold to the boundary of the radius-$1$ neighborhood of its convex core is $C^1$ and infinitesimally bilipschitz with respect to the Poincar\'e metric.  We can then compose the markings and nearest point projections to give a sequence of $C^1 $-embeddings
$$\sigma_i : \Sigma \longrightarrow N_i$$ that are uniformly infinitesimally bilipschitz: for each tangent vector $v \in T\Sigma $,
$$\frac 1 K | | v | |_{\Sigma } \ \leq \ | | D\sigma_i (v) | |_{N_i } \ \leq \ K | | v | |_{\Sigma }. $$  The distortion constant $K\geq 1 $ comes from the nearest point projection and depends only on the injectivity radius of the conformal boundary in the Poincar\'e metric; since our conformal boundaries here always lie in the same Teichm\"uller class, the constant $K $ is independent of $i $.

Every element $[\gamma] \in \pi_1 \Sigma $ is represented by a (based) closed curve $\gamma $ on $\Sigma $.  The bilipschitz bounds above show that the length of $\sigma_i (\gamma) $ in $N_i $ is at most $K $-times the length of $\gamma $ in $\Sigma $.  Therefore, the translation length $$\inf_{x \in \BH^ 3} \dist \big (x,  \rho_i\big ( [\gamma ] \big ) \big ) \leq K \length_\Sigma (\gamma) \ \text { for all } i.$$  Then $(\rho_i) $ is a sequence of representations with bounded pointwise traces, so after conjugating each representation we can assume $(\rho_i) $ is pre-compact in the representation variety $\Hom (\pi_1 \Sigma,\PSL_2 \BC) $, by Lemma \ref {boundedtraces}.

Let $\rho : \pi_1 \Sigma \to \PSL_2 \BC $ be an accumulation point of $(\rho_i) $; in fact, to eliminate double subscripts  let us just assume that $\rho_i \to \rho $ itself.  Chuckrow's Theorem (Lemma \ref {chuckrow}) implies that $\rho $ is discrete and torsion free, so $N_\rho = \BH^ 3 / \rho (\pi_1 \Sigma) $ is a hyperbolic $3$-manifold.  We claim that $(\sigma_i) $ converges to an embedding $$\sigma_\rho : \Sigma \longrightarrow N_\rho $$ whose image bounds a convex subset of $N_\rho $.  From this it will follow that $N_\rho $ has a convex cocompact end with a neighborhood homeomorphic to $\Sigma \times (0,\infty) $.

The map $\sigma_\rho $ is best constructed in the universal cover.  Lift $(\sigma_i) $ to a sequence of $\rho_i$-equivariant maps $\tilde \sigma_i : \BH^ 2 \to \BH^ 3 $. The images $\tilde \sigma_i (\BH^ 2) $ bound convex sets $A_i \subset \BH^ 3 $: each $A_i $ projects to the submanifold of $N_i  $ obtained by removing the neighborhood of $N_i  $'s exterior end that is bounded by $\sigma_i (\Sigma)$.  Because the maps $(\tilde\sigma_i)$ are locally uniformly bilipschitz, after passing to a subsequence they converge to a $\rho$-equivariant local embedding $\tilde \sigma_\rho : \BH^ 2 \to \BH^ 3 $.  The image $\tilde \sigma_\rho (\BH^ 2) $ is the boundary of a $\rho $-invariant convex set $A_\rho \subset \BH^ 3 $, the Hausdorff limit of $(A_i) $. This implies that $\tilde \sigma_\rho $ is a covering map onto its image. 

Passing to the quotient, $\tilde \sigma_\rho $ covers a map $\sigma_\rho : \Sigma \to N_\rho $ whose image bounds a convex subset of $N_\rho $.  Because $\tilde \sigma_\rho $ is a covering map onto its image, the same is true for $\sigma_\rho $.  If $\sigma_\rho $ is a nontrivial covering, there are points $x, y \in \BH^ 2 $ that are not related by a deck transformation of $\Sigma $ but where for some $\gamma \in \pi_1 (\Sigma), $ $$\rho (\gamma)\big(\tilde \sigma_\rho (x)\big) = \tilde\sigma_\rho (y). $$
This shows that $\dist_{\BH^ 3}\big (\rho_i (\gamma)\big[ \tilde\sigma_i (x) \big],\tilde \sigma_i (y)\big) \to 0 .$  Lemma \ref {shortpaths} implies that $$\dist_{\partial A_i }\big (\rho_i (\gamma)\big[ \tilde\sigma_i (x) \big],\tilde \sigma_i (y)\big) \longrightarrow 0,$$ where $\dist_{\partial A_i} $ is the length of a shortest path on $\partial  A_i=\tilde \sigma_i(\BH^ 2)$ connecting $\tilde\sigma_i (x)$ and $\tilde\sigma_i (y) $.  Because the maps $(\tilde \sigma_i ) $ are uniformly  locally bilipschitz covering maps, we can lift these shortest paths to $\BH^ 2 $ to see that for some $\gamma_i \in \pi_1 (\Sigma)$,
$$\dist_{\BH^ 2} (\gamma_i (x), y)\longrightarrow 0 \text { as } i\longrightarrow\infty .$$  This of course implies that $y=\gamma_i (x) $ for large $i $, which is a contradiction.  Therefore, $\sigma_\rho : \Sigma \to N_\rho $ is an embedding.

The image $\sigma_\rho(\Sigma) $ bounds a convex subset of $N_\rho $.  It follows that on the other side $\sigma_\rho (\Sigma)$ bounds a neighborhood of a convex cocompact end of $N_\rho $ that is homeomorphic to $\Sigma \times (0,\infty) $.  The Tameness Theorem of Agol \cite {Agoltameness} and Calegari-Gabai \cite {Calegarishrinkwrapping} implies that $N_\rho $ is homeomorphic to the interior of a compact $3$-manifold $C_\rho $. The map $\sigma_\rho $ is isotopic to a homeomorphism $$h_\rho : \Sigma \to \partial_+ C_\rho $$ onto some boundary component $\partial_+ C_\rho $ of $C_\rho $.  This component carries the fundamental group of $C_\rho $, so Lemma \ref {carrying} implies that $C_\rho $ is a compression body with exterior boundary $\partial_+ C_\rho $.  

Identify $\partial_+ C_\rho $ with the conformal boundary of the end it faces.  Because $\partial_+  C_\rho \to C_\rho $ is $\pi_1  $-surjective,  there is a unique component $O_\rho $ of the domain of discontinuity $\Omega (\rho (\pi_1 \Sigma)) $ that covers $\partial_+ C_\rho $.  It can be described as the set of points in $\partial_\infty \BH^ 3 $ that are endpoints of geodesic rays emanating out of $A_\rho \subset \BH^ 3 $  orthogonal to $\partial A_\rho$.  Similarly, the components $O_i \subset \Omega (\rho_i (\pi_1 \Sigma)) $ that cover $\partial_+ C_i $ consist of all of the endpoints of rays emanating orthogonally from $A_i $.  

The convex sets $( A_i ) $ converge to $A_\rho$ in the Hausdorff topology and the support planes of $A_i $ converge to those of $A_\rho $, so $O_i \to O$ in the sense of Carth\'eodory.  But since $\rho_i \to \rho $, the quotients also converge: $$(h_i)^*[ \partial_+ C_i ] \longrightarrow (h_\rho)^*[ \partial_+ C_\rho ] \in \CT (\Sigma) .$$ Then as $(h_i)^*[ \partial_+ C_i ] = [X] $ for all $i $, $(h_\rho)^*[ \partial_+ C_\rho ]= [X] $ as well.\end{proof}
\begin{figure}[tb]
  \begin{center}
  \includegraphics[width=3.5in]{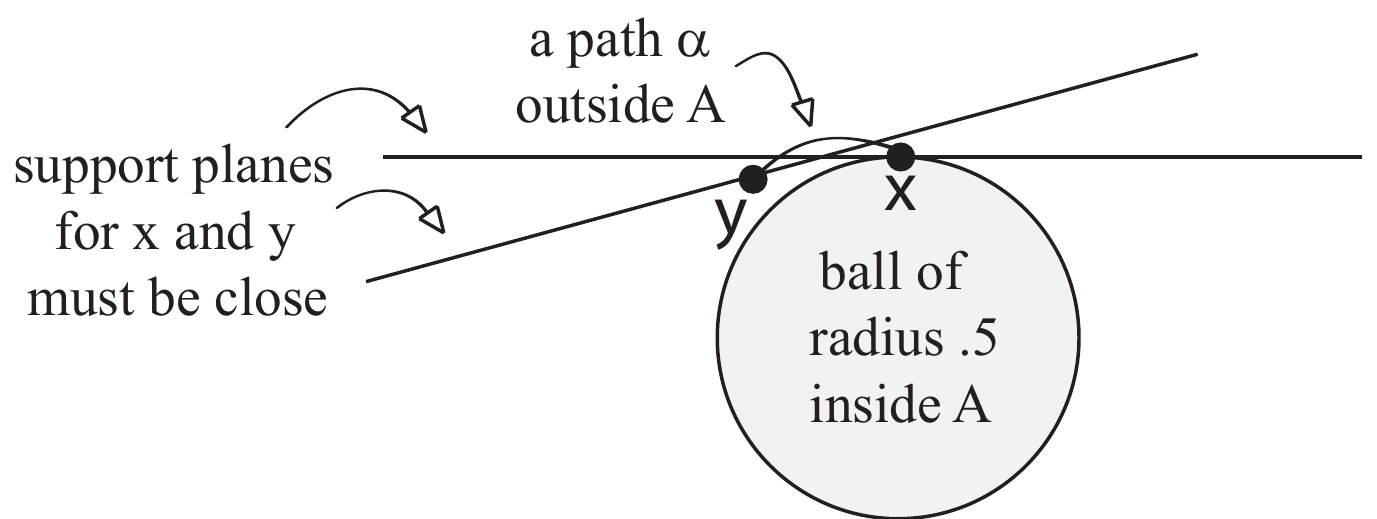}
  \caption{The proof of Lemma \ref {shortpaths}.}
  \label{balls}
  \end{center}
\end{figure}

To finish this section, here is Lemma \ref {shortpaths} promised above.

\begin{lemma}\label {shortpaths}
There is some $\epsilon >0 $ with the following property.  If $A \subset \BH^ 3 $ is the radius-$1 $ neighborhood of a convex set, then for every $x, y \in \partial A $ we have: $$\text {if} \ \dist_{\BH^ 3} (x, y) \leq \epsilon, \ \text {then} \ \dist_{\partial A} (x, y) \leq 2 \dist_{\BH^ 3} (x, y). $$ Here, $\dist_{\partial A} (x, y) $ is the shortest length of a path on $\partial A $ joining $x $ and $y $.
\end{lemma}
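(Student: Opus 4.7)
The plan is to exploit the inscribed-unit-ball property to control the extrinsic geometry of $\partial A$, and then project the ambient geodesic chord from $x$ to $y$ onto $\partial A$ by nearest-point projection.

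First I would observe that $A$ is convex, since the metric $r$-neighborhood of a convex set in a CAT$(-1)$ space is convex, so $\partial A$ is a convex surface. The hypothesis that $A$ is the \emph{radius-$1$} neighborhood of the convex set $K$ gives, at every $p \in \partial A$, an inscribed unit ball $B(\pi_K(p),1)\subset A$ tangent to $\partial A$ from inside. Convexity of $A$ then sandwiches $\partial A$ between the supporting hyperplane at $p$ and this inscribed sphere, so the principal curvatures of $\partial A$ lie pointwise in $[0,\coth 1]$; in particular, $\partial A$ is $C^{1,1}$ with a uniform bound on its second fundamental form. The figure presumably illustrates this sandwiching for the two boundary points $x$ and $y$ with their inscribed balls centered at $\pi_K(x)$ and $\pi_K(y)$.

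A surface in $\BH^3$ with a uniform bound on its second fundamental form admits a tubular neighborhood of some definite radius $r_0 > 0$ on which the nearest-point projection $\pi_{\partial A}$ is a well-defined $C^1$ map. A standard normal-coordinate calculation then shows that on the radius-$r$ tubular neighborhood, $\pi_{\partial A}$ is bilipschitz with constant $1 + O(r)$ as $r \to 0$. Given $x,y\in\partial A$ with $d := \dist_{\BH^3}(x,y) \leq \epsilon$, the geodesic segment $\sigma$ from $x$ to $y$ lies in $A$ by convexity of $A$, and each point of $\sigma$ is within $d/2$ of $\{x,y\}\subset \partial A$; hence $\sigma$ lies in the radius-$(\epsilon/2)$ tubular neighborhood. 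Then $\pi_{\partial A}\circ \sigma$ is a path on $\partial A$ from $x$ to $y$ whose length is at most $(1+O(\epsilon))\cdot d$, and taking $\epsilon$ sufficiently small forces this factor to be at most $2$.

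The main technical obstacle is justifying the tubular neighborhood and the bilipschitz estimate for a merely $C^{1,1}$ (not $C^2$) surface. Fortunately, since the curvature bound $\coth 1$ is fixed and the target constant $2$ leaves enormous slack compared to the optimal $1+O(\epsilon)$ ratio, only a crude local argument is needed; this can be carried out directly from the explicit inscribed sphere and the convex position of $\partial A$ relative to its tangent plane at $p$, with no delicate hyperbolic trigonometry required.
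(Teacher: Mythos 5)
Your route is genuinely different from the paper's and can be made to work, but one step of your outline rests on a statement that is false in general. The paper goes \emph{outside} $A$: using the inscribed balls it argues that support planes at $x$ and $y$ meet at a point within $\dist_{\BH^3}(x,y)$ of both, runs a path from $x$ to $y$ along those two planes, and then applies the nearest-point projection onto the \emph{convex set} $A$, which is automatically $1$-Lipschitz and fixes $x$ and $y$; no regularity, curvature, or tubular-neighborhood analysis is needed. You instead go \emph{inside}, pushing the geodesic chord outward by the nearest-point projection onto the \emph{surface} $\partial A$, so the whole burden is to show that this projection is well defined and uniformly close to $1$-Lipschitz on a collar.

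The weak link is your claim that a uniform bound on the second fundamental form yields a tubular neighborhood of definite radius: bounded $|\mathrm{II}|$ controls focal radius but not self-approach of the surface (the boundary of a very thin convex slab has nearly zero curvature yet inner reach only half the thickness), so this general principle would fail. What actually rescues you is the hypothesis itself, $A = N_1(K)$ with $K$ convex: for $q\in A$ with $t=\dist(q,K)\in(0,1)$ one has $\dist(q,\partial A)=1-t$, and any nearest point $y\in\partial A$ satisfies $1=\dist(y,K)\le \dist(y,q)+\dist(q,K)$ with equality, which forces $\pi_K(y)=\pi_K(q)$ and $y$ to be the point at distance $1$ from $\pi_K(q)$ on the ray through $q$; so the projection is unique and explicit on the collar $\{\dist(\cdot,\partial A)<1\}$. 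With that description the Lipschitz estimate is also clean: project the chord onto the convex set $N_{1-\epsilon/2}(K)$ (distance non-increasing), then flow outward along normals for time at most $\epsilon/2$, expanding lengths by at most $\sinh(1)/\sinh(1-\epsilon/2)=1+O(\epsilon)$. So your argument is repairable using exactly the inscribed spheres you mention, but as written the tubular-neighborhood step should be replaced by this (or a similar) convexity argument; the paper's outside-the-body proof sidesteps the issue entirely because projection onto a convex set requires no smoothness or reach hypotheses.
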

\begin {proof}
Every point $x \in \partial A $ llies on the boundary of a  ball of radius  $\frac 12 $ contained in $A $.  Therefore, there is some $\epsilon >0 $ such that if $\dist_{\BH^ 3} (x, y) \leq \epsilon $, any support planes for $x $ and $y $ must intersect at some point at a distance of at most, say, $\dist_{\BH^ 3} (x, y)  $ from both (see Figure \ref {balls}).  We can then make a path $\alpha $ between $x $ and $y $ that does not intersect the interior of $A $ by connecting both $x $ and $y $ to this point of intersection with paths  that run along their support planes.  Projecting the path $\alpha $ onto $\partial A $ does not increase its length, so $\dist_{\partial A} (x, y) \leq 2\dist_{\BH^ 3} (x, y) $.
\end{proof}

\section{A Sequence of Convex-Cocompact Compression Bodies}
\label{sequencesect}

Let $M $ be a compression body with exterior boundary $\partial_+ M = \Sigma $ and let $f : \Sigma \to \Sigma $ be a homeomorphism.  In this section we analyze limits of the following sequence, which is the main tool in the proof of Theorem \ref {Main}.    

\begin{prop}\label {construction}
Given $[X] \in \CT (\Sigma) $, there is a sequence of compression bodies $C_i $ marked by homeomorphisms $h_i : \Sigma \to \partial_+ C_i $, such that for each $i $,
\begin {enumerate}
\item $\ker (\xymatrix  { \pi_1 \Sigma \ar[r]^ {(h_i)_*} & \pi_1 (C_i) }) = f^ { - i }_*(\ker( \pi_1 \Sigma \to \pi_1 M)),$
\item the interior of each $C_i $ has a convex cocompact hyperbolic metric, and when $\partial_+ C_i $ is identified with the conformal boundary of the end it faces, we have $(h_i) ^*[\partial_+ C_i ] = [X]$.
\end{enumerate}
\end{prop}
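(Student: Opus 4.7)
The plan is to take $C_i=M$ as an abstract compression body for every $i$, varying only the marking of $\partial_+M$ and the choice of convex cocompact metric. Let $\iota:\Sigma\to\partial_+M$ denote the fixed identification, so that $\ker\iota_*=\ker(\pi_1\Sigma\to\pi_1M)$. I would then define the marking $h_i:\Sigma\to\partial_+M$ by
\[
h_i=\iota\circ f^i.
\]
Since $(h_i)_*=\iota_*\circ f_*^i$, an element $\gamma\in\pi_1\Sigma$ lies in $\ker(h_i)_*$ precisely when $f_*^i(\gamma)\in\ker\iota_*$, i.e.\ when $\gamma\in f_*^{-i}(\ker(\pi_1\Sigma\to\pi_1M))$. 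This is exactly condition (1).

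For condition (2), I would appeal directly to the Ahlfors-Bers parameterization (Theorem \ref{AhlforsBers}). Fix once and for all a reference point $[Y]\in\CT(\partial M\setminus\partial_+M)$. For each $i$, Ahlfors-Bers produces a convex cocompact hyperbolic metric on $\mathring M$ whose conformal boundary is the conformal structure $(h_i)_*[X]=(\iota\circ f^i)_*[X]$ on $\partial_+M$ together with $[Y]$ on $\partial M\setminus\partial_+M$. With this metric installed, identifying $\partial_+M$ with the corresponding component of the conformal boundary gives
\[
(h_i)^*[\partial_+ C_i]=(h_i)^*(h_i)_*[X]=[X],
\]
which is condition (2).

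I do not expect any serious obstacle here: the statement is essentially a bookkeeping consequence of Ahlfors-Bers together with the observation (implicit via Lemma \ref{kernelsversuscompressionbodies}) that one can realize each kernel $f_*^{-i}(\ker(\pi_1\Sigma\to\pi_1M))$ by a remarking of the single manifold $M$ rather than by constructing a new abstract compression body. The only point needing care is keeping the pullback/pushforward convention on $\CT(\Sigma)$ straight, so that the conformal structure chosen on $\partial_+M$ is $(h_i)_*[X]$ rather than its image under the inverse; once that is set up correctly, both conditions follow immediately.
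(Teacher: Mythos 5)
Your proposal is correct and is essentially the paper's own argument: the paper likewise takes $C_i = M$ equipped with the Ahlfors--Bers convex cocompact metric whose exterior conformal boundary is the pushforward $f^i[X]$ (with a fixed $[Y]$ on the interior boundary), marks $\partial_+ C_i$ by precomposing the identification with $f^i$, and verifies (1) by the same meridian/kernel computation and (2) by the same pullback cancellation. No substantive difference.
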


Everywhere below, $f_* $ will be some fixed automorphism $\pi_1 \Sigma \to \pi_1 \Sigma $ in the homotopy class of $f $.  Note that any two such automorphisms are conjugate,  so act the same way on normal subgroups of $\pi_1 (\Sigma) $.  Therefore, the particular choice does not affect the statement of Proposition \ref {construction}.

\begin {proof}
Fix base points $[X] \in \CT (\Sigma) $ and $[Y] \in \CT (\partial _- M) $.  Using the Ahlfors-Bers Parameterization (Theorem \ref {AhlforsBers}), construct a sequence of convex cocompact hyperbolic metrics $(d_i)$ on the interior of $M $ with conformal boundaries
$$(f^ {i }[X],[Y])\  \in \ \CT (\Sigma) \times \CT (\partial_- M) = \CT (\partial M). $$
It is important to note that $f $ is acting on $\CT (\Sigma) $ by \it pushing forward markings; \rm that is, $f^ i [X] $ is the Teichm\"uller class of the conformal structure on $\Sigma $ obtained by precomposing the charts for $[X] $ with $f^ {- i} $.  (See also Section \ref {Teichmullersection}.)

We define $C_i $ to be the compression body $M $ considered with the metric $d_i $ on its interior,  and mark its exterior boundary with the homeomorphism  $$h_i : \Sigma \longrightarrow \partial_+ C_i $$ obtained by precomposing the equality $\Sigma = \partial_+ C_i $ with $f^ { i } : \Sigma \to \Sigma $.    Then $(h_i) ^*[\partial_+ C_i ] $ is the Teichm\"uller class of the conformal structure whose charts are obtained from those of $f^ i [X]$ by precomposing with $f^ {i} $, so $(h_i) ^*[\partial_+ C_i ] = [X]$.  

Finally, if $\gamma $ is a closed curve on $\Sigma $, we have that 
\begin {eqnarray*} [\gamma] \in \ker (\xymatrix  { \pi_1 \Sigma \ar[r]^ {(h_i)_*} & \pi_1 (C_i)}) &\Longleftrightarrow & h_i (\gamma)\text { is a meridian for } C_i \\ & \Longleftrightarrow & f^ {i} (\gamma) \text { is a meridian for } M \\ & \Longleftrightarrow & [\gamma]\in f^ { - i }_*(\ker( \pi_1 \Sigma \to \pi_1 M)).\end {eqnarray*}  
Therefore, $\ker (\xymatrix  { \pi_1 \Sigma \ar[r]^ {(h_i)_*} & \pi_1 (C_i) }) = f^ { - i }_*(\ker( \pi_1 \Sigma \to \pi_1 M)). $ \end{proof}

As in Section \ref {compressionbodies}, the interior of $C_i $ is uniformized by a representation $$\rho_i : \pi_1 \Sigma \longrightarrow \PSL_2 \BC $$ in the homotopy class of $h_i $.  
Note that from above, the kernel of $\rho_i $ is $$\ker (\rho_i) = f^ { - i }_*(\ker( \pi_1 \Sigma \to \pi_1 M)). $$
By the compactness of generalized Bers slices (Theorem \ref{compactness}), we may assume after conjugation that $(\rho_i) $ is pre-compact in $\Hom (\pi_1 \Sigma,\PSL_2\BC) $.  Then:

\begin{definition} We set $\CA _f \subset \Hom (\pi_1 \Sigma,\PSL_2 \BC)$ to be the subset of the representation variety consisting of all algebraic accumulation points of $(\rho_i) $.  
\end{definition}

Note that the set $\CA_f $ depends on the choice of conjugating sequence used above to make $(\rho_i) $ pre-compact.  However, any other sequence in $\PSL_2\BC $ that conjugates $(\rho_i) $ to be pre-compact in $\Hom (\pi_1\Sigma,\PSL_2\BC) $ differs from our chosen one by a pre-compact sequence in $\PSL_2\BC $.  So, the set of conjugacy classes of representations $\rho \in \CA_f $ does not depend on the conjugating sequence.

Theorem \ref {compactness} gives the following description of points $\rho \in \CA_f $.

\begin{fact}  Every $\rho \in\CA_f $ has discrete and torsion-free image. The quotient $N_\rho = \Hyp^3 / \rho (\pi_1\Sigma)$ is homeomorphic to the interior of a compression body $C_\rho $ whose exterior boundary faces a convex cocompact end of $N_\rho $ and is marked by a homeomorphism $h_\rho : \Sigma \to \partial_+ C_\rho $ in the homotopy class of $\rho $.  
\label {propsequences}
\end{fact}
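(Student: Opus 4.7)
The plan is to recognize this statement as essentially a direct corollary of the Compactness of Generalized Bers Slices (Theorem \ref{compactness}), applied to the specific sequence $(\rho_i)$ produced by Proposition \ref{construction}. Nothing new needs to be proved; the work is in verifying that the hypotheses of Theorem \ref{compactness} are in force.

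First I would check hypotheses. By Proposition \ref{construction}, each $C_i$ is a compression body whose exterior boundary is marked by a homeomorphism $h_i : \Sigma \to \partial_+ C_i$; the interior of $C_i$ carries a convex cocompact hyperbolic metric, so in particular the end faced by $\partial_+ C_i$ is convex cocompact; and $\rho_i$ uniformizes $\mathring C_i$ in the homotopy class of $h_i$. The crucial uniformity, $(h_i)^*[\partial_+ C_i] = [X] \in \CT(\Sigma)$ for a single fixed $[X]$, is guaranteed by clause (2) of Proposition \ref{construction}. These are exactly the hypotheses of Theorem \ref{compactness}.

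Theorem \ref{compactness} then yields a conjugating sequence in $\PSL_2\BC$ after which $(\rho_i)$ is precompact in $\Hom(\pi_1\Sigma,\PSL_2\BC)$; this is precisely the conjugation used (implicitly) in the definition of $\CA_f$. Fix $\rho \in \CA_f$ and pass to a subsequence $\rho_{i_k} \to \rho$. Theorem \ref{compactness} applied to this subsequence produces a compression body $C_\rho$ whose interior is uniformized by $\rho$ and a homeomorphism $h_\rho : \Sigma \to \partial_+ C_\rho$ in the homotopy class of $\rho$, with $\partial_+ C_\rho$ facing a convex cocompact end of $N_\rho$ and $(h_\rho)^*[\partial_+ C_\rho] = [X]$. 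Discreteness and torsion-freeness of $\rho$ are part of the same conclusion, and at bottom come from Chuckrow's Theorem (Lemma \ref{chuckrow}), noting that the image of $\rho$ is non-abelian since $\rho$ approximates representations uniformizing compression bodies marked by $\Sigma$.

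There is no real obstacle: the potential worry is only bookkeeping, namely that the conjugation used to precompactify $(\rho_i)$ is the same one implicit in the definition of $\CA_f$, and that the algebraic limit $\rho$ does not depend in any essential way on this choice (as noted in the discussion following the definition of $\CA_f$, different precompactifying conjugations differ by a precompact sequence in $\PSL_2\BC$, so they produce the same set of conjugacy classes of accumulation points). Once this is observed, the fact is an immediate consequence of Theorem \ref{compactness}.
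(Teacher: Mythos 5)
Your proposal is correct and matches the paper exactly: the Fact is stated there as an immediate consequence of Theorem \ref{compactness} applied to the sequence $(\rho_i)$ from Proposition \ref{construction}, which is precisely your argument. Your added bookkeeping about the conjugating sequence and the source of discreteness via Chuckrow's Theorem is consistent with the paper's treatment and introduces nothing different in substance.
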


\label {sequences}
\label{dynamicsect}

In the rest of this section, we show that there is some $\tau \in \CA_f $ such that $C_\tau $ embeds \it naturally \rm in $M $ as a compression body to which a power of $f $ extends. Since each $C_\tau $ comes with a marking $h_\tau : \Sigma \to \partial_+ C_\tau $, a \it natural \rm  embedding $C_\tau \into M $ is just one that restricts to $(h_\tau)^ {-1 } $ on $\partial_+ C_\tau $. 

So, we are looking for some $\tau \in \CA_f $ such that 
\begin {enumerate}
\item $\ker  (\tau) \subset \ker (\pi_1 \Sigma \to \pi_1 M) $, so that $C_\tau $ embeds naturally as a sub-compression body of $M $ (Lemma \ref {kernelsversuscompressionbodies}),
\item some power $f_*^ i : \pi_1 \Sigma \to \pi_1 \Sigma $ preserves $\ker(\tau) $, so that $f  ^ i $ extends to that sub-compression  body of $M $.
\end{enumerate}

We will find $\tau $ by analyzing the dynamics of the action of $f_*$ on the kernels of representations  in $\CA_f $.  First, we must show that there is such an action.

\begin{claim} 
\label{invariantkernels}
The map $f_* : \pi_1 \Sigma \to \pi_1 \Sigma $ acts naturally on the set $$\CK_f =  \{ \ker \rho \ | \  \rho \in \CA_f \}. $$ That is, if $\rho \in \CA_f$ then $f_*(\ker \rho) = \ker \rho'$ for some $\rho' \in \CA_f$. \end{claim}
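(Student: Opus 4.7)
The plan is to find the required $\rho'$ as an accumulation point of the shifted sequence $(\rho_{i-1})$, and then to verify the identity $\ker \rho' = f_*(\ker \rho)$ by passing kernels to the algebraic limit via Chuckrow's Theorem. Fix $\rho \in \CA_f$, so that $\rho = \lim_k \rho_{i_k}$ along some subsequence. Because $(\rho_i)$ is pre-compact in $\Hom(\pi_1 \Sigma, \PSL_2\BC)$, after extracting a further subsequence I may assume that the shifted sequence $(\rho_{i_k - 1})$ itself converges to some $\rho'$, which then lies in $\CA_f$ by definition.

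Next I relate the kernels at finite stage. The formula $\ker \rho_j = f_*^{-j}(K)$ from Proposition \ref{construction}, with $K = \ker(\pi_1 \Sigma \to \pi_1 M)$, gives
$$\ker \rho_{i_k - 1} \;=\; f_*^{-(i_k-1)}(K) \;=\; f_*\bigl(f_*^{-i_k}(K)\bigr) \;=\; f_*(\ker \rho_{i_k})$$
for every $k$. The goal is to show that this identity survives in the limit, i.e.\ $\ker \rho' = f_*(\ker \rho)$.

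Both inclusions follow by applying Chuckrow's Theorem (Lemma \ref{chuckrow}) to the two convergent sequences. All representations in sight are discrete, torsion-free and non-elementary: the $\rho_i$ are holonomies of hyperbolic compression bodies, and the limits $\rho, \rho'$ inherit the same properties by Fact \ref{propsequences}, together with the fact that compression bodies with exterior boundary $\Sigma$ of genus at least $2$ have non-abelian fundamental group. For one inclusion, take $\gamma \in \ker \rho$; Chuckrow's Theorem gives $\gamma \in \ker \rho_{i_k}$ for all sufficiently large $k$, hence $f_*(\gamma) \in f_*(\ker \rho_{i_k}) = \ker \rho_{i_k - 1}$, and algebraic convergence yields $\rho'(f_*(\gamma)) = \lim_k \rho_{i_k-1}(f_*(\gamma)) = \Id$, so $f_*(\gamma) \in \ker \rho'$. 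The reverse inclusion is symmetric: for $\gamma' \in \ker \rho'$, Chuckrow applied to $\rho_{i_k - 1} \to \rho'$ gives $\gamma' \in \ker \rho_{i_k - 1} = f_*(\ker \rho_{i_k})$ for large $k$, so $f_*^{-1}(\gamma') \in \ker \rho_{i_k}$, and passing to the limit gives $\rho(f_*^{-1}(\gamma')) = \Id$, whence $\gamma' \in f_*(\ker \rho)$.

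The main obstacle is conceptual rather than technical: one must observe that the element-by-element statement in Lemma \ref{chuckrow} is exactly strong enough (no finite normal generation of kernels is required here), so once one has the two convergent sequences $\rho_{i_k} \to \rho$ and $\rho_{i_k - 1} \to \rho'$, Chuckrow immediately promotes the finite-stage identity $\ker \rho_{i_k-1} = f_*(\ker\rho_{i_k})$ to the desired equality of limit kernels. One must also take care to keep the direction of $f_*$ and the index shift straight, since Proposition \ref{construction} expresses kernels as \emph{backward} iterates under $f_*$, so the natural shift on the sequence corresponds to \emph{forward} application of $f_*$ on kernels.
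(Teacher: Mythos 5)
Your proof is correct, and it takes a genuinely different route from the paper's. The paper argues geometrically: it produces, for each $j$, a homeomorphism between the consecutive marked compression bodies $C_{i_j}$ and $C_{i_j+1}$ whose boundary restriction is quasiconformal with dilatation controlled by $f$ relative to $[X]$, upgrades these to uniform quasi-isometries of $\BH^3$ intertwining the two holonomies, and extracts a limiting quasi-isometry intertwining $\rho$ with $\rho'\circ f_*$, from which the identification of kernels is immediate. You instead stay entirely on the algebraic side: Proposition \ref{construction} gives the exact finite-stage identity $\ker\rho_{i-1}=f_*(\ker\rho_i)$, and you transfer it to the limits by applying the element-wise kernel semicontinuity of Chuckrow's Theorem (Lemma \ref{chuckrow}) to both convergent sequences $\rho_{i_k}\to\rho$ and $\rho_{i_k-1}\to\rho'$, with pointwise convergence supplying the easy containments; the hypotheses of Lemma \ref{chuckrow} hold for the reasons you give (Fact \ref{propsequences}, plus non-abelianness of the image, since a compression body with exterior boundary of genus at least $2$ has non-abelian fundamental group), and you are right that only the element-wise statement is needed, not finite normal generation. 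What each approach buys: yours is shorter and avoids the quasiconformal-extension and quasi-isometry-compactness machinery, at the cost of producing only the kernel identity, whereas the paper's construction also yields a quasi-isometry between $N_\rho$ and $N_{\rho'}$ compatible with $f$ — extra geometric structure not needed for this claim. A further point in your favor is the bookkeeping: since $\ker\rho_i=f_*^{-i}\big(\ker(\pi_1\Sigma\to\pi_1 M)\big)$, forward application of $f_*$ corresponds to shifting the index \emph{down}, so your choice of the shifted subsequence $(\rho_{i_k-1})$ is the one that matches the claim as stated (the paper's proof shifts the index up), and you flag this orientation issue explicitly.
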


On the other hand, note that the action of $f _*$ on $\Hom (\pi_1\Sigma,\PSL_2 \BC) $ by precomposition does not usually preserve $\CA_f $, since if $\rho \in \CA_f $ then the (marked) exterior conformal boundary of  $N_\rho $ is $[X]$ while that of $N_{\rho \circ  f} $ is $f^ {- 1} [X] $.  
\begin{proof}
Assume that the subsequence $(\rho_{ i_j }) $ converges to $\rho \in \CA_f $. Passing to a further subsequence, we may assume that $(\rho_{i_j + 1}) $ algebraically converges to some other $\rho' \in \CA_f $. This will be the representation $\rho' $ referenced in the claim.

Observe that there is a homeomorphism $\phi_{ i_j } : C_{ i_j } \to C_{i_j + 1} $ with $\phi_{ i_j } \circ h_{ i_j } = h_{i_j + 1} \circ f $. The restriction $\phi_{ i_j } |_{\partial C_{ i_j } } $ is quasi-conformal with the same dilatation as $f $ has with respect to the conformal structure $[X] $ on $\Sigma $. One can then homotope $\phi_{ i_j } $ on the interior of $C_{ i_j } $ so that it is a $K $-quasi-isometry for some $K $ depending only on $f $ \cite[Theorem 5.31]{Matsuzakihyperbolic}.  We  lift $\phi_{i_j} $ to a $K $-quasi-isometry $$\widetilde {\phi_{ i_j }} : \Hyp^3 \to \Hyp^3 $$ with $\widetilde {\phi_{ i_j }} \circ [\rho_{ i_j } (\gamma)] = \rho_{i_j + 1} \circ f_*(\gamma) $ for all $\gamma \in \Gamma $.  Up to another subsequence, $\widetilde {\phi_{ i_j }} $ converges in the compact open topology to a quasi-isometry $\widetilde{\phi }: \Hyp ^3 \to \Hyp ^3 $, this time satisfying $\widetilde {\phi } \circ \rho (\gamma) = \rho' \circ f_*(\gamma) $. However, from this it is immediate that $\ker \rho' = f_*(\ker \rho) $. The claim follows.
\end{proof}

Recall from $(1) $ that we are searching for elements of $\CK_f $ that are contained in the subset $\ker( \pi_1 \Sigma \to \pi_1 M) \subset \pi_1 \Sigma. $ While not every element of $\CK_f $ has this property, it can be ensured easily with applications of $f_*$. 

\begin {claim}\label {rightkernel}
If $\ker \rho \in \CK_f  $, then there is some $i \in \BZ $ such that  $$f_*^ i (\ker \rho) \subset \ker (\pi_1 \Sigma \to \pi_1 M). $$
\end{claim}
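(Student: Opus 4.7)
The plan is to invoke the strengthened form of Chuckrow's Theorem (Lemma \ref{chuckrow}) to pull the kernel of $\rho$ back into the kernels of the $\rho_i$'s, and then translate via the defining formula $\ker \rho_i = f_*^{-i}(\ker(\pi_1 \Sigma \to \pi_1 M))$.

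First, let $\rho \in \CA_f$ and choose a subsequence $(\rho_{i_j})$ with $\rho_{i_j} \to \rho$ algebraically. By Fact \ref{propsequences}, $\rho$ uniformizes the interior of a compression body $C_\rho$, and $\ker \rho = \ker \bigl(\pi_1 \Sigma \to \pi_1 C_\rho\bigr)$. Because $C_\rho$ is built from $\Sigma \times [0,1]$ by attaching $2$-handles along a \emph{finite} collection of disjoint annuli and capping off $\BS^2$-components with $3$-balls, the Seifert--van Kampen Theorem shows that $\ker \rho$ is the normal closure in $\pi_1 \Sigma$ of finitely many meridian loops. In particular, $\ker \rho$ is finitely normally generated within $\pi_1 \Sigma$.

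Next, apply the ``in particular'' clause of Chuckrow's Theorem (Lemma \ref{chuckrow}) to the convergent sequence $\rho_{i_j} \to \rho$: since $\rho$ has non-abelian image (being the holonomy of a nontrivial hyperbolic manifold) and $\ker \rho$ is finitely normally generated, there exists some $j_0$ such that
\[
\ker \rho \ \subset \ \ker \rho_{i_j} \quad \text{for all } j \geq j_0.
\]
By Proposition \ref{construction}(1), $\ker \rho_{i_j} = f_*^{-i_j}\bigl(\ker(\pi_1 \Sigma \to \pi_1 M)\bigr)$, so
\[
f_*^{i_j}(\ker \rho) \ \subset \ \ker(\pi_1 \Sigma \to \pi_1 M).
\]
Setting $i := i_{j_0}$ gives the desired inclusion and completes the proof.

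The only mildly delicate point is verifying that $\ker \rho$ really is finitely normally generated, so that the stronger form of Chuckrow applies; but this is immediate from the topological description of compression bodies as handlebody attachments supplied by Fact \ref{propsequences}. The rest is a direct translation using the kernel formula built into the construction in Proposition \ref{construction}.
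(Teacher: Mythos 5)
Your proposal is correct and follows essentially the same route as the paper: establish that $\ker\rho$ is finitely normally generated (the paper cites a maximal collection of disjoint meridians under the marking $h_\rho$, you cite the finite handle structure of $C_\rho$ — the same fact), apply the strengthened Chuckrow's Theorem to get $\ker\rho \subset \ker\rho_{i}$ for large indices along the convergent subsequence, and then translate via $\ker\rho_i = f_*^{-i}(\ker(\pi_1\Sigma \to \pi_1 M))$ from Proposition \ref{construction}. The only cosmetic difference is your explicit check of the non-abelian-image hypothesis, which the paper leaves implicit.
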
 
\begin {proof}
Every $\rho \in \CA_f $ has finitely normally generated kernel: for instance, one can use any subset of $\pi_1 \Sigma $ representing a set of curves that maps to a maximal set of disjoint meridians under $h_\rho : \Sigma \to \partial_+ C_\rho $ (see Fact \ref{propsequences}). It follows from Chuckrow's Theorem (Theorem \ref {chuckrow}) that $$\ \ \ \ker \rho \ \subset  \ker \rho_i = f^{ -i}_*\big(\ker (\pi_1 \Sigma \to \pi_1 M) \,  \big)$$ for some large $i $. Therefore, $f_*^ {i} (\ker \rho) \subset \ker (\pi_1 \Sigma \to \pi_1 M) $.
\end{proof}

To satisfy $(2) $, we must  show that the action of $f_*$ on $\CK_f $ has a finite orbit.  The idea here is to look at \it minimal \rm elements of $\CK_f $, so consider the set
 $$\CK_f^\text {min} = \{ \ker \rho \in \CK_f \ | \ \nexists \ker \rho' \in  \CK_f \text { with }  \ker \rho' \subsetneq \ker  \rho \}.$$

\begin{claim} $\CK_f^\text {min}$ is nonempty, finite and invariant under $f_*$.
\label{minimalkernels}
\end{claim}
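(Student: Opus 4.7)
\emph{Nonemptiness.} I will verify that $(\CK_f,\subseteq)$ satisfies the descending chain condition. Associate to each $K\in\CK_f$ the integer $n(K)$ equal to the number of $2$-handles in a minimal handle decomposition of the compression body $C_K$, equivalently the size of a maximal disjoint non-parallel compressing disk system; this is a nonnegative integer bounded above in terms of the genus of $\Sigma$. If $K'\subsetneq K$ in $\CK_f$, Lemma~\ref{kernelsversuscompressionbodies} gives a proper sub-compression body embedding $C_{K'}\hookrightarrow C_K$ with common exterior boundary, and building $C_K$ from $C_{K'}$ requires at least one additional $2$-handle, so $n(K')<n(K)$. Hence DCC holds, and since $\CK_f$ is nonempty (Fact~\ref{propsequences}), so is $\CK_f^{\min}$.

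\emph{$f_*$-invariance.} Claim~\ref{invariantkernels} shows $f_*(\CK_f)\subseteq\CK_f$. Rerunning that proof with the subsequence $(\rho_{i_j-1})$ in place of $(\rho_{i_j+1})$ yields $f_*^{-1}(\CK_f)\subseteq\CK_f$, so $f_*$ is an order-preserving bijection of $\CK_f$ to itself and therefore permutes the set of minimal elements.

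\emph{Finiteness.} This is the main obstacle. Let $K_M:=\ker(\pi_1\Sigma\to\pi_1 M)$ and set $S:=\{K\in\CK_f^{\min}:K\subseteq K_M\}$. Claim~\ref{rightkernel} (via Chuckrow's Theorem) gives that for each $K\in\CK_f^{\min}$ the iterate $f_*^i(K)$ lies in $K_M$ for all sufficiently large positive $i$; combined with $f_*$-invariance, this iterate also lies in $\CK_f^{\min}$, hence in $S$. Assuming $S$ is finite, the forward $f_*$-orbit of $K$ is eventually contained in $S$ and so eventually periodic; injectivity of $f_*$ on $\CK_f^{\min}$ promotes eventually periodic to periodic outright, and periodicity together with eventual containment in $S$ forces $K$ itself to lie in $S$. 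Thus $\CK_f^{\min}=S$, yielding finiteness. The problem therefore reduces to showing $S$ is finite: each $K\in S$ gives via Lemma~\ref{kernelsversuscompressionbodies} a natural embedding of $C_K$ as a sub-compression body of $M$ with $\partial_+ C_K=\Sigma$, and the minimality of $K$ in $\CK_f$ means that no proper sub-compression body of $C_K$ is $C_{K'}$ for any $K'\in\CK_f$. Finiteness of $S$ should follow from a topological analysis of such minimally-reduced sub-compression bodies of $M$, with the minimality constraint used to rule out infinite families; this is the delicate step that I expect to require genuine work.
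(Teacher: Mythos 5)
Your nonemptiness and invariance steps are essentially sound and close to the paper's: the paper runs the same descending-chain argument using the Euler characteristic of the interior boundary as the monotone complexity (note that your ``equivalently'' is off -- the minimal number of $2$-handles is not the size of a maximal non-parallel disk system -- but either quantity, once its strict monotonicity under proper sub-compression body inclusion is checked, does the job), and your observation that one also needs $f_*^{-1}(\CK_f)\subseteq\CK_f$, obtained by rerunning Claim~\ref{invariantkernels} with $(\rho_{i_j-1})$, is a correct and in fact slightly more careful rendering of the invariance step than the paper's one-line remark.

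The finiteness step, however, contains a genuine gap, and it is the substantive part of the claim. You reduce it to finiteness of $S=\{K\in\CK_f^{\min}\,:\,K\subseteq \ker(\pi_1\Sigma\to\pi_1 M)\}$ and then leave that unproved; but minimality in $\CK_f$ is not a topological property of a sub-compression body of $M$ (it refers to the analytically defined set $\CA_f$), and $M$ contains infinitely many non-isotopic sub-compression bodies with exterior boundary $\Sigma$ and pairwise distinct kernels inside $\ker(\pi_1\Sigma\to\pi_1 M)$ (already the one-handle compression bodies along distinct meridians), so no ``topological analysis of minimally-reduced sub-compression bodies'' can bound $S$; the reduction does not make the problem easier. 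The missing idea is compactness of $\CA_f$ combined with the semicontinuity of kernels in Chuckrow's Theorem (Lemma~\ref{kernelslemma}): given infinitely many distinct minimal kernels $\ker\tau_i$ with $\tau_i\in\CA_f$, pass to a subsequence $\tau_i\to\tau$ with $\tau\in\CA_f$ (possible since $\CA_f$ is the set of accumulation points of a precompact sequence, hence compact); Chuckrow then gives $\ker\tau\subseteq\ker\tau_i$ for all large $i$, and minimality of each $\ker\tau_i$ forces equality, contradicting distinctness. A secondary flaw in your reduction: Claim~\ref{rightkernel} is proved by applying Chuckrow to the convergent subsequence $(\rho_{i_j})$, so it yields $f_*^{i}(K)\subseteq\ker(\pi_1\Sigma\to\pi_1 M)$ only for the infinitely many indices $i=i_j$, not for all sufficiently large $i$; with only ``infinitely often'' you get periodicity of the orbit but not $K\in S$, so even granting finiteness of $S$ your claimed equality $\CK_f^{\min}=S$ does not follow as written.
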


\begin{proof}
To show that minimal kernels exist, note that if $\ker \rho \subsetneq \ker \rho' $ then by Lemma \ref{kernelsversuscompressionbodies} the manifold $N_\rho$ must be a strict subcompression body of $N_{\rho'} $. The Euler characteristic of the interior boundary of $N_\rho $ must then be strictly smaller (more negative) than that of $N_{\rho'} $. These Euler characteristics can be no smaller than $\chi(\Sigma)$, so we are guaranteed a compression body whose interior boundary has minimal Euler characteristic, and therefore a minimal kernel.

The $f_* $-invariance follows directly from the definition and Claim \ref{invariantkernels}, so all that remains is to show that $\CK_{ \text {min}  } $ is finite. Assume, hoping for a contradiction, that there is an infinite sequence $\tau_i \in \CA_f $ with pairwise distinct, minimal kernels $\ker \tau_i$. We may assume after passing to a subsequence that $\tau_i $ converges algebraically to some representation $\tau \in \CA_f $. 

So by Chuckrow's Theorem (Lemma \ref{kernelslemma}), $\ker \tau \subset \ker \tau_i $ for large $i$. Minimality of $\ker \tau_i $  implies that this is actually an equality, so for large $i$ all our representations have the same kernel. This is a contradiction.
\end{proof}

\comment {
The claim implies that $f _*$ acts as a permutation on the finite set $\CK_f^\text {min} $.  Any permutation of a finite set has finite order, so we obtain the following corollary.

\begin{corollary}
If $\ker \rho \in \CK_f^\text {min} $, then $f^n_*(\ker \rho) = \ker \rho $ for some $n \in \BN $.  
\label{fixedpoint}
\end{corollary}

If we start with some $\rho \in \CA_f $ with minimal kernel, this argument gives:

Lemma \ref {kernelsversuscompressionbodies} transforms the above corollaries into topology.   Namely, recall that $N_{\tau }= \Hyp^ 3 / \tau (\pi_1 \Sigma) $ is the interior of a compression body $C_{\tau} $.  Corollary \ref {rightkernel} states that $C_{\tau  }$ embeds as a sub-compression body of $M $, and Corollary \ref {fixedpoint} implies that some power of $f: \Sigma \to \Sigma $ extends to $C_{\tau } $.  
In fact,
}

We can now prove the main result of the section.

\begin{theorem}
\label {extending}
Let $M $ be a compression body with exterior boundary $\Sigma $ and let $f : \Sigma \to \Sigma $ be a homeomorphism. Then there is some $\tau\in \CA_f $ such that the associated compression body $C_{\tau} $ embeds naturally in $M $ as a sub-compressionbody to which a power of $f $ extends.  Moreover, up to isotopy $C_\tau $ is the unique maximal sub-compression body of $M $ to which a power of $f $ extends.
\end{theorem}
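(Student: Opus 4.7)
My plan is to combine the three claims above to produce $\tau$, and then to tackle the maximality/uniqueness assertion via algebraic limits supplemented with a lattice-closure argument.

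\textbf{Producing $\tau$.}  Claim~\ref{minimalkernels} shows $\CK_f^{\min}$ is nonempty, finite, and $f_*$-invariant, so $f_*$ permutes it and some power $f_*^n$ fixes every element.  Pick $\rho_0 \in \CA_f$ with $\ker \rho_0 \in \CK_f^{\min}$, use Claim~\ref{rightkernel} to find $i$ with $f_*^i(\ker \rho_0) \subset K_M := \ker(\pi_1 \Sigma \to \pi_1 M)$, and let $\tau \in \CA_f$ be the representation supplied by Claim~\ref{invariantkernels} with $\ker \tau = f_*^i(\ker \rho_0)$; then $\ker\tau\in\CK_f^{\min}$ by $f_*$-invariance, $\ker\tau\subset K_M$ by construction, and $f_*^n(\ker\tau)=\ker\tau$.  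Lemma~\ref{kernelsversuscompressionbodies} applied to the inclusion $\ker\tau\subset K_M$ embeds $C_\tau$ naturally into $M$ as a sub-compression body; applied instead to the two markings $h_\tau$ and $h_\tau\circ f^n$ of $C_\tau$---which determine the same kernel in $\pi_1\Sigma$ by $f_*^n$-invariance---it produces a homeomorphism of $C_\tau$ extending $f^n|_\Sigma$, so $f^n$ partially extends via $C_\tau \subset M$.

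\textbf{Plan for maximality.}  Let $C'\subset M$ be any sub-compression body with $\partial_+C'=\Sigma$ to which $f^k$ extends, and set $K'=\ker(\pi_1\Sigma\to\pi_1C')\subset K_M$; then $f_*^k(K')=K'$.  For every $j\geq 0$ we have
$$K'\;=\;f_*^{-kj}(K')\;\subset\;f_*^{-kj}(K_M)\;=\;\ker\rho_{kj},$$
so any algebraic accumulation point $\rho^*\in\CA_f$ of the subsequence $(\rho_{kj})_{j\geq 0}$ satisfies $K'\subset\ker\rho^*$ by pointwise convergence, and Lemma~\ref{kernelsversuscompressionbodies} then embeds $C'$ as a sub-compression body of $C_{\rho^*}$.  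To push this through to $C_\tau$ I would argue that the family $\mathcal{S}$ of sub-compression bodies of $M$ admitting an extending power of $f$ is closed under joins in the sub-compression body lattice of $M$: given $C_1,C_2\in\mathcal{S}$ with $f^{k_1},f^{k_2}$ extending, the normal closure $\langle\langle K_1\cup K_2\rangle\rangle\subset K_M$ is still $f_*^{\mathrm{lcm}(k_1,k_2)}$-invariant and, by a standard disk-surgery argument in the irreducible $3$-manifold $M$, is realized as $\ker(\pi_1\Sigma\to\pi_1 C_3)$ for some $C_3\in\mathcal{S}$ containing both.  Closure under joins produces a unique maximum of $\mathcal{S}$ up to isotopy, and the containment $C'\subset C_{\rho^*}$ combined with applying Claim~\ref{rightkernel} and the first paragraph to $\rho^*$ identifies that maximum with $C_\tau$.

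\textbf{Main obstacle.}  The delicate step is the lattice-closure argument: verifying that the formal join $\langle\langle K_1\cup K_2\rangle\rangle$ is literally a compression-body kernel in $M$---rather than being strictly smaller than what a naive union-and-surgery of disk systems produces---while preserving $f_*^{\mathrm{lcm}(k_1,k_2)}$-invariance throughout the surgery moves.  Once this closure is in hand, the remainder is bookkeeping with the three claims and the uniqueness half of Lemma~\ref{kernelsversuscompressionbodies}.
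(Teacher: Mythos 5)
Your first paragraph (producing $\tau$, and extracting the partial extension of $f^n$ from the two markings $h_\tau$ and $h_\tau\circ f^n$ via Lemma \ref{kernelsversuscompressionbodies}) is essentially the paper's argument and is correct. The maximality half, however, has a genuine gap, and it sits exactly where you flagged it: the lattice-closure step. You never show that $\langle\langle K_1\cup K_2\rangle\rangle$ is realized as $\ker(\pi_1\Sigma\to\pi_1 C_3)$ for a sub-compression body $C_3\subset M$ (disk surgery a priori only yields a $C_3$ whose kernel \emph{contains} the normal closure, and if it is strictly larger your $f_*$-invariance argument for extending a power of $f$ to $C_3$ collapses). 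Moreover this machinery is unnecessary: if $f^{k_1}$ extends to $C_1$ and $f^{k_2}$ to $C_2$, set $k=k_1k_2$; then $K_1=f_*^{-ik}(K_1)\subset f_*^{-ik}\big(\ker(\pi_1\Sigma\to\pi_1 M)\big)=\ker\rho_{ik}$ for every $i$, and likewise for $K_2$. Applying your first paragraph to $f^{k}$ (whose associated sequence is precisely $(\rho_{ik})$) produces an accumulation point $\rho$ of $(\rho_{ik})$ such that $C_\rho$ embeds naturally in $M$ with a power of $f$ extending to it, and pointwise convergence gives $K_1,K_2\subset\ker\rho$, so by Lemma \ref{kernelsversuscompressionbodies} this single sub-compression body contains isotopes of both $C_1$ and $C_2$. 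Together with the Euler-characteristic bound on ascending chains, this yields the maximal element of $\mathcal{S}$ with no surgery at all.

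The second gap is your identification of that maximum with $C_\tau$. Your containment $K'\subset\ker\rho^*$ involves an accumulation point $\rho^*$ of $(\rho_{kj})$ that has no stated relation to $\tau$: $\ker\rho^*$ need not be minimal, so "applying Claim \ref{rightkernel} and the first paragraph to $\rho^*$" does not show a power of $f$ extends to $C_{\rho^*}$, nor does it produce any containment between $\ker\rho^*$ and $\ker\tau$, so the chain of embeddings never reaches $C_\tau$. The missing idea is to run the same limiting argument aimed at $\tau$ itself: choose a subsequence $\rho_{i_n}\to\tau$ with all indices in a single residue class $i_n\equiv l \pmod{k}$; then $f_*^{-l}(K')=f_*^{-i_n}(K')\subset f_*^{-i_n}\big(\ker(\pi_1\Sigma\to\pi_1 M)\big)=\ker\rho_{i_n}$ for every $n$, hence $f_*^{-l}(K')\subset\ker\tau$ in the limit. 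By Lemma \ref{kernelsversuscompressionbodies} a homeomorphic (re-marked) copy of $C'$ then embeds in $C_\tau$, so $C_\tau$ is at least as compressed as every member of $\mathcal{S}$; combined with the existence of the maximum from the previous paragraph and comparison of interior-boundary Euler characteristics, this forces $C_\tau$ to be that unique maximal sub-compression body. Without this residue-class trick your argument, as written, does not close.
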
  

Note that the representation $\tau $ may very well be faithful.  In that case, $C_\tau$ is just homeomorphic to $\Sigma \times [0, 1] $ and the assertion that $f $ extends is automatic.  However, in the next section we show that if $f : \Sigma \to \Sigma $ is a pseudo-Anosov homeomorphism with stable lamination in the limit set $\Lambda (M) $ then the compression body $C_\tau $ is actually nontrivial.  


\begin {proof}
To find $\tau $, we first take some $\rho \in \CA_f $ with minimal kernel, as given by Claim \ref {minimalkernels}.  The orbit of $\ker \rho $ under $f_*$ is then finite, and contains the kernel of some $\tau \in \CA_f $ with $\ker  (\tau) \subset \ker (\pi_1 \Sigma \to \pi_1 M) $, by Claim \ref{rightkernel}.  Therefore,
\begin {enumerate}
\item $\ker  (\tau) \subset \ker (\pi_1 \Sigma \to \pi_1 M) $, and
\item some power $f_*^ i : \pi_1 \Sigma \to \pi_1 \Sigma $ preserves $\ker(\tau) $.\end{enumerate}
By Lemma \ref {kernelsversuscompressionbodies}, this implies that $C_\tau $ embeds naturally in $M $ as a compression body to which a power of $f$ extends.

We now prove that $C_\tau $ is maximal among sub-compression bodies of $M $ to which a power of $f $ extends.  The first step is to show that there \it exists \rm a  sub-compression body of $M $ to which a power of $f $ extends that is maximal up to isotopy. As there can be no strictly increasing infinite sequence of sub-compressionbodies of $M $, it suffices to prove the following claim.

\begin {claim}
If $C_1 $ and $C_2 $ are sub-compression bodies of $M $ to which powers of $f $ extend, then there is some sub-compression body $C \subset M $ to which a power of $f $ extends that contains isotopes of both $C_1 $ and $C_2 $.
\end{claim}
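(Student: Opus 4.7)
The plan is to realize $C$ as the sub-compression body of $M$ whose kernel is the ``join'' $K_1 K_2$ of the kernels coming from $C_1$ and $C_2$, and then to extend a power of $f$ across it by invoking Lemma~\ref{kernelsversuscompressionbodies}.

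First I would record the algebraic setup. Let $K_j = \ker\bigl(\pi_1\Sigma \xrightarrow{(h_j)_*} \pi_1 C_j\bigr)$ for $j = 1, 2$, where $h_j : \Sigma \to \partial_+ C_j$ is the marking induced by $C_j \hookrightarrow M$, and let $i_j$ be such that $f^{i_j}$ extends to $C_j$. Both $K_1$ and $K_2$ are normal subgroups of $\pi_1\Sigma$ sitting inside $K_M := \ker(\pi_1\Sigma \to \pi_1 M)$, so their product $K := K_1 K_2$ is the smallest normal subgroup of $\pi_1\Sigma$ containing both and is still contained in $K_M$. Setting $i = i_1 i_2$, the automorphism $(f^i)_*$ preserves each $K_j$---this is precisely what it means for $f^i$ to extend to $C_j$---and hence also preserves $K$.

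Next I would build a sub-compression body $C \subset M$ with kernel $K$ that contains isotopes of $C_1$ and $C_2$. After replacing $M$ by Bonahon's characteristic compression body, I may assume $C_1, C_2 \subset M$. Pick maximal systems $D_1, D_2$ of pairwise disjoint essential non-parallel meridian disks for $C_1, C_2$ inside $M$, so that $C_j$ is a regular neighborhood of $\Sigma \cup D_j$ plus $3$-balls and $\partial D_j$ normally generates $K_j$. Put $D_1 \cup D_2$ in general position; remove circles of intersection by an innermost-disk argument using the irreducibility of $M$, and remove arcs of intersection by outermost-arc disk surgery applied to disks of $D_1$. An outermost arc $\alpha$ on some $D' \in D_2$ cobounds with an arc $\beta \subset \partial D'$ a sub-disk $\Delta \subset D'$ whose interior is disjoint from $D_1$; the surgery replaces a disk $D \in D_1$ by two new disks whose boundaries are obtained from $\partial D$ by splitting at the endpoints of $\alpha$ and inserting parallel copies of $\beta$. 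The new boundaries therefore lie in $\langle\langle K_1 \cup K_2 \rangle\rangle = K$, and conversely $[\partial D]$ is a product involving the new boundaries and $[\partial D']$, so the normal closure of the boundary system remains exactly $K$ after each exchange. After finitely many surgeries the disks are pairwise disjoint; let $C \subset M$ be the compression body obtained by taking a regular neighborhood of $\Sigma \cup D_1 \cup D_2$ in $M$ and capping off any $\BS^2$-boundary components with $3$-balls. Then $C$ contains isotopic copies of $C_1$ (built from $D_1$) and $C_2$ (built from $D_2$), and $\ker(\pi_1\Sigma \to \pi_1 C) = K$.

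Finally I would extend $f^i$ to $C$ by applying Lemma~\ref{kernelsversuscompressionbodies} to $C$ with two markings of its exterior boundary: the identity marking $h_A = \mathrm{id}_\Sigma$ and the marking $h_B = f^i$. Because $(f^i)_*$ preserves $K$, both markings have kernel $K$, so the lemma yields a self-homeomorphism $\phi : C \to C$ whose restriction to $\partial_+ C$ is $h_B \circ h_A^{-1} = f^i$. This is the desired extension.

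I expect the main obstacle to be the disk-exchange step: one must verify both that the surgeries can actually be carried out in $M$ (discarding inessential or parallel outputs gracefully) and, more substantively, that the normal closure of the boundary system is genuinely preserved under each exchange. Once this is in place, the algebraic bookkeeping and the final appeal to Lemma~\ref{kernelsversuscompressionbodies} are routine.
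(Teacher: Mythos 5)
There is a genuine gap, and it sits exactly where you yourself flagged it: the assertion that each outermost-arc exchange preserves the normal closure of the boundary system. After surgering $D \in D_1$ along the outermost subdisk $\Delta \subset D'$, the two new boundary curves are concatenations of the form $\delta_1 \ast \beta$ and $\beta \ast \delta_2$, where $\partial D = \delta_1 \ast \delta_2$ and $\beta \subset \partial D'$. From this you do get that $[\partial D]$ is a product of (conjugates of) the two new classes, so the normal closure of the system can only grow; but there is no reason that a loop made of half of $\partial D$ and half of $\partial D'$ lies in $\langle\langle [\partial D], [\partial D'] \rangle\rangle$, and in general it does not. So after all the exchanges you only know $\ker(\pi_1\Sigma \to \pi_1 C) \supseteq K = K_1K_2$, possibly strictly. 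This is fatal for your final step: the two-marking application of Lemma \ref{kernelsversuscompressionbodies} requires $(f^i)_*$ to preserve $\ker(\pi_1\Sigma \to \pi_1 C)$, and all you have is invariance of $K$ itself. In effect your plan presupposes that the join of two compression-body kernels is again the kernel of a sub-compression body of $M$ (i.e.\ that the normal closure of an intersecting family of meridians is ``geometric''), and that is precisely the hard point left unproved; you cannot fix it by passing to a larger compression body containing your surgered disks, because $f^i_*$ does not preserve $\ker(\pi_1\Sigma \to \pi_1 M)$ and hence gives no action on sub-compression bodies of $M$ from which invariance of the enlarged kernel could be deduced.

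The paper's proof avoids this issue entirely by staying inside the limiting machinery of Section 4: if $f^j$ extends to $C_1$ and $f^k$ to $C_2$, then $K_1$ and $K_2$ are $f_*^{jk}$-invariant subgroups of $\ker(\pi_1\Sigma \to \pi_1 M)$, hence contained in $\ker\rho_{ijk} = f_*^{-ijk}\big(\ker(\pi_1\Sigma \to \pi_1 M)\big)$ for every $i$, and therefore in the kernel of any algebraic accumulation point $\rho$ of $(\rho_{ijk})$. Applying the first part of Theorem \ref{extending} to $f^{jk}$ produces such a $\rho$ whose compression body $C_\rho$ embeds naturally in $M$ with a power of $f$ extending to it, and Lemma \ref{kernelsversuscompressionbodies} then gives isotopes of $C_1$ and $C_2$ inside it. Note that $\ker\rho$ is never claimed to equal $K_1K_2$: the limit construction is what simultaneously supplies the compression-body structure and the $f_*$-invariance that your surgered disk system lacks. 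A purely topological argument along your lines would need a genuinely new input at the disk-exchange step.
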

\begin{proof}
 Suppose that $f^j$ extends to $C_1 $ and $f ^ k $ extends to $C_2 $.  Then $f_\star^ {kj}$ preserves the kernels of $\pi_1 \Sigma \to C_1 $ and $\pi_1 \Sigma \to C_2 $, which are then subsets of $\ker \rho_{i kj} $ for all $ i $.   Applying the first part of Theorem \ref {extending} to $f^ {kj} $, there is some accumulation point $\rho $ of $(\rho_{i kj}) $ where $C_\rho $ embeds naturally in $M $ as a sub-compression body to which some $f^ {i kj} $ extends.  But $$\ker (\pi_1 \Sigma \longrightarrow \pi_1C_1), \ \kernel (\pi_1 \Sigma \longrightarrow \pi_1 C_2) \ \subset \  \ker \rho, $$ so by Lemma \ref {kernelsversuscompressionbodies} the image of $C_\rho $ in $M $ contains isotopes of $C_1 $ and $C_2 $.  
\end {proof}

We now show that the compression body $C_\tau $ is at least as compressed, in the sense of having an interior boundary with less negative Euler characteristic, as any other sub-compression body of $M $ to which a power of $f $ extends.  This will show that $C_\tau $ is in fact the maximal  sub-compression body referenced above.

To prove this, we rely upon the following claim.

\begin {claim}\label{thatclaim}
Let $C $ be a sub-compression body of $M $ to which some power $f^ k $ of $f $ extends.  Then if $\rho \in  \CA_f $, there is some $l \in \BZ $ such that $$ f_*^ {-l}\big (\kernel (\pi_1 \Sigma \to \pi_1 C) \, \big )\subset \ker  (\rho).$$
\end{claim}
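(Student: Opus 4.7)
The plan is to unwind the definition of $\rho_i$ and then pass to a subsequence with controlled residue modulo $k$. The key algebraic identity is
\[
\rho_i \;=\; \sigma_i \circ f_*^{\,i},
\]
where $\sigma_i : \pi_1\Sigma \to \PSL_2\BC$ is the holonomy of the convex cocompact metric $d_i$ on $\mathring M$ taken with respect to the \emph{identity} marking of $\partial_+ M = \Sigma$. This falls right out of the description in Proposition~\ref{construction} of the markings $h_i$ as the $i$-fold iterate $f^i$, and gives $\ker \sigma_i = K_M := \ker(\pi_1\Sigma \to \pi_1 M)$. The two hypotheses of the claim translate to $K_C := \ker(\pi_1\Sigma \to \pi_1 C) \subset K_M$ (because $C$ is a sub-compression body of $M$) and $f_*^{\,k}(K_C) = K_C$ (because $f^k$ extends to $C$).

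Now let $(\rho_{i_j})$ be a subsequence converging algebraically to $\rho$. The main trick is to write $i_j = k a_j + r_j$ with $r_j \in \{0, 1, \ldots, k-1\}$ and pass to a further subsequence on which $r_j$ is constantly equal to some residue $r$; the resulting sub-subsequence still converges algebraically to $\rho$. For every $\gamma \in K_C$ one then computes
\[
\rho_{i_j}\bigl(f_*^{-r}(\gamma)\bigr) \;=\; \sigma_{i_j}\bigl(f_*^{\,i_j - r}(\gamma)\bigr) \;=\; \sigma_{i_j}\bigl(f_*^{\,k a_j}(\gamma)\bigr),
\]
and the $f_*^{\,k}$-invariance of $K_C$ places $f_*^{\,k a_j}(\gamma)$ back inside $K_C \subset K_M = \ker \sigma_{i_j}$, so the expression is $\Id$.

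Hence $f_*^{-r}(K_C) \subset \ker \rho_{i_j}$ for every $j$, and since algebraic convergence is pointwise convergence on $\pi_1\Sigma$, taking the limit yields $f_*^{-r}(K_C) \subset \ker \rho$. Setting $l = r$ completes the proof. No part of this is really hard; the only thing to watch out for is the mod-$k$ bookkeeping — one is tempted to try $l = 0$, which fails because the $i_j$ themselves need not be multiples of $k$ and so $f_*^{\,i_j}$ need not preserve $K_C$. The residue trick is exactly what repairs this.
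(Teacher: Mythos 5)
Your proof is correct and is essentially the paper's own argument: both pass to a subsequence of $(\rho_{i_j})$ whose indices lie in a single residue class mod $k$, use the $f_*^k$-invariance of $\ker(\pi_1\Sigma\to\pi_1 C)$ together with $\ker\rho_i=f_*^{-i}\bigl(\ker(\pi_1\Sigma\to\pi_1 M)\bigr)$ to place $f_*^{-r}\bigl(\ker(\pi_1\Sigma\to\pi_1 C)\bigr)$ inside every $\ker\rho_{i_j}$, and conclude by pointwise (algebraic) convergence. Your factorization $\rho_i=\sigma_i\circ f_*^{\,i}$ is just a restatement (valid up to conjugation, which does not affect kernels) of the kernel formula the paper cites from Proposition~\ref{construction}.
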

\begin {proof}
Since $\rho \in \CA_f $ there is a subsequence $(\rho_{i_n} )$ of $(\rho_i) $ that converges to  it; passing to another subsequence if necessary, we may assume that the indices $i_n $ all lie in some fixed mod-$k$ equivalence class $[ l ] \subset \BN $.  Then for each $i_n $,
\begin {align*}  f_*^ {-l}\big (\kernel (\pi_1 \Sigma\to \pi_1 C) \,\big )& =  f_*^ {- i_n}\big ( \kernel (\pi_1 \Sigma\to \pi_1 C)\, \big) \\
& \subset  f_*^ {- i_n}\big ( \kernel (\pi_1 \Sigma\to \pi_1 M)\, \big) \\
& = \kernel (\rho_{i_n}),
\end {align*}
by Proposition \ref {construction}.  Taking the limit as $n\to \infty $ proves the claim.
\end{proof}
Lemma \ref {kernelsversuscompressionbodies} and Claim \ref{thatclaim} imply that any $C\subset M $ to which a power of $f $ extends can be embedded as a sub-compression body of $C_\tau $.  Therefore $C_\tau $ is at least as compressed as $C $, which implies as above that $C_\tau $ is the (unique) maximal sub-compression body of $M $ to which a power of $f $ extends.
\end{proof}

\section{Stable Laminations and the Proof of Theorem \ref {Main}}
\label{laminationsect}

In this section we will analyze the set of accumulation points $\CA_f $ introduced in Section \ref{sequences} in the case that $f $ is a pseudo-Anosov map.  The main result is the following; after proving it we will quickly derive Theorem \ref{Main}.

\begin{prop}
Let $M $ be a compression body with exterior boundary $\Sigma $. If $f: \Sigma \to \Sigma $ is a pseudo-Anosov map whose stable lamination $\lambda_+ (f) $ lies in the limit set $\Lambda (M),$ then every $\rho \in \CA_f$ has a non-trivial kernel. 
\label{nontrivialkernels}
\end{prop}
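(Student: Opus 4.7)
The plan is to prove the contrapositive: assume some $\rho \in \CA_f$ has trivial kernel, and deduce that $\lambda_+(f) \notin \Lambda(M)$. By Fact~\ref{propsequences}, if $\ker \rho$ is trivial then the compression body $C_\rho$ is also trivial (since its exterior boundary is $\pi_1$-injective), so $N_\rho = \BH^3 / \rho(\pi_1 \Sigma)$ is a convex cocompact $\Sigma \times \BR$, i.e.\ a quasifuchsian manifold. Pass to a subsequence with $\rho_{i_j} \to \rho$ algebraically, so that the marked convex cocompact compression bodies $C_{i_j}$ converge to this quasifuchsian limit.

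Next I would invoke the geometric input promised by Section~7: when a sequence of marked convex cocompact compression bodies $C_{i_j}$ with fixed exterior conformal boundary converges algebraically to a quasifuchsian limit, the corresponding disk sets escape to infinity in the curve complex. Concretely, for every vertex $\gamma \in \mathcal C(\Sigma)$,
\[
\dist_{\mathcal C(\Sigma)}\bigl(\gamma,\, \mathcal D(C_{i_j})\bigr) \longrightarrow \infty.
\]
This is the main obstacle of the argument; heuristically, a meridian staying within bounded curve-complex distance of $\gamma$ should force a persistent compressing disk in the algebraic limit, contradicting the incompressibility of both conformal boundary components of a quasifuchsian manifold. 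A bounded-diameter argument in the convex core, using the Poincar\'e metric versus convex-core boundary comparison of Corollary~\ref{projections}, is the natural tool.

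I would then transport this back to $M$ through the construction of Section~\ref{sequencesect}. Proposition~\ref{construction} was arranged so that $\ker(h_{i_j})_* = f_*^{-i_j}\bigl(\ker(\pi_1 \Sigma \to \pi_1 M)\bigr)$, which on the level of curves translates to the identity
\[
\mathcal D(C_{i_j}) \;=\; f^{-i_j}\bigl(\mathcal D(M)\bigr)
\]
inside $\mathcal C(\Sigma)$. Since $f$ acts isometrically on $\mathcal C(\Sigma)$, applying $f^{i_j}$ to the displayed escape statement yields, for every $\gamma \in \mathcal C(\Sigma)$,
\[
\dist_{\mathcal C(\Sigma)}\bigl(f^{i_j}(\gamma),\, \mathcal D(M)\bigr) \longrightarrow \infty.
\]
In particular, the forward orbit $\{f^k(\gamma) : k \in \BN\}$ is unbounded away from $\mathcal D(M)$.

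Finally, this directly contradicts Proposition~\ref{boundeddistance}: under the hypothesis $\lambda_+(f) \in \Lambda(M)$, the quasi-convexity of the disk set (Theorem~\ref{quasi-convexity}) together with the $\delta$-hyperbolicity argument carried out there gives $\sup_{k \geq 1} \dist(f^k(\gamma),\, \mathcal D(M)) < \infty$. Hence $\lambda_+(f) \notin \Lambda(M)$, completing the contrapositive. Modulo the Section~7 escape statement, every other step is a bookkeeping argument combining the construction of $(C_i)$ with the remarking trick and Proposition~\ref{boundeddistance}.
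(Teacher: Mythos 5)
Your skeleton matches the paper's: argue the contrapositive, use $\ker\rho_{i_j}=f_*^{-i_j}\bigl(\ker(\pi_1\Sigma\to\pi_1 M)\bigr)$ to identify the meridians of $C_{i_j}$ with $f^{-i_j}(\mathcal D(M))$, remark by $f^{i_j}$, and contradict Proposition \ref{boundeddistance}. But the step you defer as ``the Section~7 escape statement'' is not an external input you may cite --- it is exactly the content that this proposition has to establish, and in the paper it is proved inside this section. Leaving it as a heuristic (``a meridian staying at bounded curve-complex distance should force a persistent compressing disk in the limit'') is the gap: nothing in what you have written explains why a sequence of meridians $\gamma_{i_j}\in\mathcal D(C_{i_j})$ at bounded distance from a fixed $\gamma$ yields a nontrivial element of $\ker\rho$, since the curves may vary with $i_j$ and algebraic convergence by itself only controls finitely many group elements at a time (Lemma \ref{kernelslemma} goes in the opposite direction). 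Moreover your framing rests on a false claim: faithfulness of $\rho$ does \emph{not} make $N_\rho$ quasifuchsian. Fact \ref{propsequences} only gives one convex cocompact end; the paper proves that the other end of $N_\rho\cong\Sigma\times\BR$ is \emph{degenerate}, with ending lamination $\lambda_-(f)$ (via unrealizability of Hausdorff limits of $f^{-i_j}(\gamma)$ and nearly straight train tracks). What that lemma is actually needed for is the absence of cusps, and your suggested tool, Corollary \ref{projections}, is not what drives the escape; that comparison is used earlier, in the compactness of generalized Bers slices.

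Concretely, the paper fills the gap in two steps you are missing. First, Lemma \ref{distanceboundslem}: for faithful $\rho$, an induction with pleated surfaces (realizing disjoint pairs of curves, which requires the no-cusps statement above, plus compactness of pleated surfaces meeting a fixed ball) shows that $\dist_C(\alpha,\beta)\le k$ forces the geodesic representatives to satisfy $\dist_{N_\rho}(\alpha_\rho,\beta_\rho)\le K(\rho,\alpha,k)$. Second, algebraic convergence supplies $(1+\epsilon)$-bilipschitz, marking-compatible immersions of the radius-$L$ neighborhood $\CN_L(\alpha_\rho)$ into $N_{i_j}$ for large $i_j$; pushing $\gamma_\rho$ forward gives a curve of small geodesic curvature in the homotopy class of $\rho_{i_j}(\gamma)$, which is therefore essential. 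Hence for every $k$, all curves with $\dist_C(\alpha,\gamma)<k$ lie outside $\ker\rho_{i_j}$ for large $i_j$, i.e.\ $\dist_C\bigl(\alpha,f^{-i_j}(\mathcal D(M))\bigr)\to\infty$, which is the escape statement. From there your remarking and the appeal to Proposition \ref{boundeddistance} are correct, but without these two steps the proposal does not constitute a proof.
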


We will actually prove the contrapositive: that if some $\rho \in \CA_f $ is faithful then $\lambda_+ (f) \notin \Lambda (M) $.  The first step in the argument is to show that faithful representations in $\CA_f $ can have no parabolics.  Since it involves no extra effort, we prove the following stronger statement.

\begin{lemma} 
If $\rho \in \CA_f$ is faithful, then $N_\rho = \Hyp^ 3 / \rho (\pi_1 \Sigma) $ is homeomorphic to $ \Sigma \times \BR$ and has no cusps.  One of the ends of $N_\rho $ is convex cocompact and the other is degenerate with ending lamination $\lambda_- (f) $.
\end{lemma}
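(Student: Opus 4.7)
The statement splits into topological and geometric parts; I would handle the topology first, then identify the two ends, with no-cusps as a corollary. By Fact \ref{propsequences}, $N_\rho$ is the interior of a compression body $C_\rho$ whose exterior boundary is marked by $h_\rho:\Sigma\to\partial_+ C_\rho$, and by Theorem \ref{compactness} the exterior end is convex cocompact with conformal structure $[X]$ under this marking. Faithfulness of $\rho$ forces $(h_\rho)_*:\pi_1\Sigma\to\pi_1 C_\rho$ to be injective, while Lemma \ref{surjectivecompression} makes it surjective, so $\pi_1 C_\rho\cong\pi_1\Sigma$. The only compression body whose exterior boundary carries all of $\pi_1$ isomorphically is the trivial one, so $C_\rho\cong\Sigma\times[0,1]$ and $N_\rho\cong\Sigma\times\BR$.

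Next I would identify the non-exterior end as degenerate with ending lamination $\lambda_-(f)$. Fix a meridian $m_0\subset\Sigma$ for $M$. By Proposition \ref{construction}, $\ker\rho_i=f_*^{-i}(\ker(\pi_1\Sigma\to\pi_1 M))$, so $[f^{-i}(m_0)]\in\ker\rho_i$ and the curve $f^{-i}(m_0)$ bounds a disc in $C_i$. Since $f$ is pseudo-Anosov and $m_0$ is essential, the sequence $(f^{-i}(m_0))$ converges projectively in $\CM\CL(\Sigma)$ to $\lambda_-(f)$, and hence to $\lambda_-(f)\in\partial_\infty\mathcal C(\Sigma)=\CE\CL(\Sigma)$ by Theorem \ref{Claridge}. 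Using pleated surfaces in $N_\rho$ homotopic to $\Sigma$ that realize successive $f^{-i}(m_0)$, I would then invoke Thurston's theory of ending invariants to conclude that these pinched curves exit a degenerate end of $N_\rho$ and that their Klarreich-limit $\lambda_-(f)$ is the ending lamination of that end. Since the exterior end is already convex cocompact at $[X]$, the degenerate end must be the other one.

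The absence of cusps is then automatic: any cusp of $N_\rho\cong\Sigma\times\BR$ is represented by a primitive simple closed curve $c\subset\Sigma$ with $\rho(c)$ parabolic, and in that case the ending lamination of each end would be supported on a component of $\Sigma\setminus c$, giving $i(\lambda,c)=0$. But $\lambda_-(f)$ fills $\Sigma$, so $i(\lambda_-(f),c)>0$ for every simple closed curve $c$, and no such $c$ can exist.

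The main obstacle, I expect, is the ending-lamination identification in the second paragraph: turning the combinatorial input ``$f^{-i}(m_0)\in\ker\rho_i$ with $f^{-i}(m_0)\to\lambda_-(f)$ in $\CE\CL(\Sigma)$'' into the geometric conclusion ``$\lambda_-(f)$ is the ending lamination of the non-exterior end of $N_\rho$'' requires care, both to run the pleated-surface realization cleanly in the algebraic limit $\rho_i\to\rho$ and to pin down that the escape of these pleated surfaces happens on the non-exterior side rather than interacting pathologically with the convex cocompact $[X]$-end already controlled by Theorem \ref{compactness}.
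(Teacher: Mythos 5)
Your first paragraph matches the paper's opening move (faithfulness plus $\pi_1$-surjectivity of the exterior boundary forces $C_\rho\cong\Sigma\times[0,1]$, hence $N_\rho\cong\Sigma\times\BR$ with a convex cocompact end marked by $[X]$), so that part is fine. The genuine gap is exactly at the step you flag as the main obstacle, and it is not a matter of extra care but of a missing idea. The curves $f^{-i}(m_0)$ lie in $\ker\rho_i$, so they are null-homotopic in the approximating manifolds $N_i$; in $N_\rho$ itself they are \emph{essential}, because $\rho$ is faithful. Nothing in your plan explains why their geodesic representatives, or pleated surfaces realizing them, must exit an end of $N_\rho$ --- calling them ``pinched curves'' conflates their behavior in the $N_i$ with their behavior in the limit. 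In particular, as written your argument does not rule out that $\rho$ is quasi-Fuchsian, with \emph{both} ends convex cocompact; degeneracy of the second end is precisely what has to be proved, and there is no statement in Thurston's theory of ending invariants that converts ``$\gamma_i\in\ker\rho_i$ and $\gamma_i\to\lambda_-(f)$ in $\CE\CL(\Sigma)$'' directly into ``$\lambda_-(f)$ is the ending lamination of an end of $N_\rho$.''

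The paper supplies this bridge by arguing through \emph{unrealizability}, in the contrapositive. Since $\lambda_-(f)$ fills and has no closed leaves, it suffices (Thurston) to show it cannot be realized by a pleated surface in the homotopy class of $\rho$; this yields degeneracy, the ending lamination, and the cusp statement all at once. Assuming realizability, one passes to the Hausdorff limit $\lambda_H$ of the $f^{-i_j}(\gamma)$ (which is $\lambda_-(f)$ together with finitely many spiraling leaves), uses Brock's continuity results to realize $\lambda_H$ as well, and obtains an $\epsilon$-nearly straight train track carrying $\lambda_H$ mapped into $N_\rho$ with geodesic curvatures less than $1$. Algebraic convergence $\rho_{i_j}\to\rho$ then gives immersions of a neighborhood of this image into $N_{i_j}$, converging to a local isometry and compatible with the markings, so for large $i_j$ the pushed-forward track is still nearly straight in $N_{i_j}$; since $f^{-i_j}(\gamma)$ is carried by the track for large $i_j$, it acquires a realization in $N_{i_j}$ with curvature less than $1$ and is therefore essential there, contradicting $f^{-i_j}(\gamma)\in\ker\rho_{i_j}$. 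This transfer-of-geometry argument is the content your proposal leaves unbuilt; note also that your no-cusps paragraph presupposes that the ending lamination has already been identified as $\lambda_-(f)$, so it inherits the same gap, whereas the paper's route gets it from the unrealizability criterion itself.
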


\begin{proof}
Recall from Proposition \ref{propsequences} that $N_\rho $ is homeomorphic to a compression body whose exterior boundary is homeomorphic to $\Sigma $ through a map in the homotopy class determined by $\rho $.  Since $\rho $ is faithful, $N_\rho $ must be homeomorphic to $\Sigma \times \BR $.  Proposition \ref {propsequences} also states that one end of $N_\rho $ is convex cocompact.  We claim that the other end is degenerate and that its ending lamination is the \it unstable \rm lamination $\lambda_-(f) $.  

Assume that $\rho $ is the limit of some subsequence $(\rho_{ i_j } )$ of the sequence $(\rho_i) $ whose accumulation points comprise $\CA_f $. 
Since $\lambda_-(f) $ is a full lamination with no closed leaves, it suffices by \cite [Prop 9.7.1] {Thurstongeometry} to show that it is unrealizable by a pleated surface in the homotopy class determined by $\rho $.  Fix a meridian curve $\gamma $ on $\Sigma =\partial M$. By \cite [Theorem 5.7]{Bleilerautomorphisms}, after passing to a subsequence we may assume that $f ^ {- i_j } (\gamma) $ converges in the Hausdorff topology to some lamination $\lambda_H \subset \Sigma $ that is the union of $\lambda_- (f) $ and finitely many leaves spiraling onto it. If $\lambda_-(f) $ is realizable in $N_\rho $, then \cite [Theorem 2.3] {Brockcontinuity} implies that $\lambda_H $ is as well. So, in search of a contradiction, assume that $\lambda_H $ is realizable in $N_\rho $ by a pleated surface in the homotopy class determined by $\rho $.

By Lemma 4.5 in \cite {Brockcontinuity} there is a train track $\tau $ in $\Sigma $ that carries $\lambda_H $ and a smooth map $f : \Sigma \to N_\rho $  in the homotopy class of $\rho $ that maps every train path on $\tau $ to an immersed path in $N_\rho $ with geodesic curvatures less than some $\epsilon < 1 $. In the terminology of \cite{Brockcontinuity}, $\tau $ is an $\epsilon $-nearly straight train track in $N_\rho $. Now, for large $i_j $ algebraic convergence gives us  immersions $$\Phi_{ i_j } : U \to N_{ i_j } $$ defined on a neighborhood $U \supset f (\Sigma) $ such that  (see Lemma 14.18 in \cite {Kapovichhyperbolic})
\begin {enumerate}
\item $\Phi_{i_j} $ converges to a local isometry in the $C^ k $-topology, for any $k \in\BN $,
\item the composition $\Phi_{ i_j } \circ f $ is in the homotopy class determined by $\rho_{ i_j } $.
\end{enumerate} Then for large $i_j $, the image $\Phi_{ i_j } \circ f (\tau) $ is an $\epsilon' $-nearly straight train track in $N_{ i_j } $ for some $\epsilon' <1 $. If $i_j $ is suitably large, the curve $f ^ {- i_j } (\gamma) $ is carried by $\tau $, and therefore $h_{ i_j }\circ f ^ {- i_j } (\gamma) $ has a realization in $N_{ i_j } $ with all geodesic curvatures less than $\epsilon' <1 $. This is impossible, because it is null-homotopic in $N_{ i_j } $. 
\end{proof}

The second step in the proof of Proposition \ref {nontrivialkernels} is a pleated surfaces argument.  For any simple closed curve $\beta \subset \Sigma $, let $\beta_\rho $ be the closed geodesic in $N_\rho $ with holonomy $\rho (\beta) $.  If $\rho $ is faithful, one can bound the distance in $N_\rho $ between $\beta_\rho $ and a fixed $\alpha_\rho $ by the distance $\dist_{\C}(\alpha,\beta) $ in the curve complex $\mathcal C (\Sigma) $:

\begin{lemma}
\label{distanceboundslem}
Assume that $\rho \in \CA_f $ is faithful and let $\alpha \subset \Sigma $ be a simple closed curve.  Then for every $k \in \BN $, there is a constant $K =K (\rho,\alpha,k) $ such that for any other simple closed curve $\beta \subset \Sigma $, we have $$\dist_C(\alpha, \beta) \leq k \ \Longrightarrow \ \dist_{ N_\rho }(\alpha_\rho, \beta_\rho) \leq K . $$
\end{lemma}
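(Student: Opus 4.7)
The plan is to induct on $k$, using pleated surfaces to transport control of $\alpha_\rho$ along a chain of disjoint simple closed curves out to $\beta_\rho$.  The base case $k = 0$ is immediate with $K = 0$.  For the inductive step, suppose the statement holds with constant $K' = K(\rho,\alpha,k-1)$; given $\beta$ with $\dist_C(\alpha,\beta) \leq k$, choose a simple closed curve $\gamma$ disjoint from $\beta$ with $\dist_C(\alpha,\gamma) \leq k-1$.  By induction $\gamma_\rho$ lies in the closed ball $\overline B(\alpha_\rho,K') \subset N_\rho$, uniformly over all admissible $\gamma$.

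Since the preceding lemma says that $N_\rho$ has no cusps, this ball is compact and its injectivity radius is bounded below by some $\epsilon_1 = \epsilon_1(\rho,\alpha,k-1) > 0$.  A standard volume argument then bounds the length of any simple closed geodesic contained in $\overline B(\alpha_\rho,K')$ by some constant $L = L(\rho,\alpha,k-1)$; in particular $\length_{N_\rho}(\gamma_\rho) \leq L$ for every $\gamma$ appearing in the inductive step.

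Now realize the disjoint pair $\gamma \cup \beta$ on a single pleated surface $g \colon \Sigma \to N_\rho$ in the homotopy class of $\rho$, so that both $\gamma_\rho$ and $\beta_\rho$ lie on $g(\Sigma)$.  It suffices to bound $\dist_{N_\rho}(\gamma_\rho,\beta_\rho)$ by the relevant diameter of $g(\Sigma)$.  The Bounded Diameter Lemma controls the diameter of each thick component of $g(\Sigma)$ in terms of $\chi(\Sigma)$ and the Margulis constant, and because $N_\rho$ is cusp-free the thin components are annuli mapping into Margulis tubes around short closed geodesics of $N_\rho$.  Since $g(\Sigma)$ contains $\gamma_\rho$ in a definite thick component and $\length_{N_\rho}(\gamma_\rho) \leq L$, these ingredients combine to give a uniform bound $D = D(L,\chi(\Sigma))$ on $\dist_{N_\rho}(\gamma_\rho,\beta_\rho)$, and setting $K = K' + D$ closes the induction.

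The main obstacle is the last bound, in the subcase where $\beta_\rho$ is the core of a very deep Margulis tube of $N_\rho$: then $\beta$ is short on $g(\Sigma)$, so the relevant thin annulus on $g(\Sigma)$ maps far into the tube, and one must argue that this does not push $\beta_\rho$ unboundedly far from $\gamma_\rho$.  The resolution is that the thin annulus is attached to a thick component of $g(\Sigma)$ whose diameter is controlled by the Bounded Diameter Lemma and which contains points close to $\gamma_\rho$, so the tube boundary lies within bounded distance of $\gamma_\rho$; an elementary estimate on Margulis tube geometry then controls the remaining distance to the core in terms of $L$ and $\chi(\Sigma)$ alone.  Verifying this uniform passage across the thin parts is the technical heart of the argument.
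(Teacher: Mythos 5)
Your induction skeleton (realize the disjoint pair $\gamma\cup\beta$ on a single pleated surface, using that $N_\rho$ has no cusps) matches the paper's, but the quantitative heart of your argument has two genuine failures. First, the inductive hypothesis does not give $\gamma_\rho\subset \overline{B}(\alpha_\rho,K')$: the distance $\dist_{N_\rho}$ in the lemma is the \emph{infimum} of distances between points of the two sets, so all you know is that $\gamma_\rho$ \emph{meets} that ball, while the geodesic itself may be very long and wander far away. Consequently your length bound $\length_{N_\rho}(\gamma_\rho)\leq L$ does not follow (and even for geodesics genuinely contained in a region of bounded geometry, no ``volume argument'' bounds the length of a simple closed geodesic there -- closed hyperbolic manifolds contain arbitrarily long simple closed geodesics despite having injectivity radius bounded below). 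Second, and more seriously, the step you yourself flag as the technical heart does not go through: if $\length_{N_\rho}(\beta_\rho)=\ell$ is tiny, the Margulis tube around $\beta_\rho$ has depth roughly $\log(1/\ell)$, which is \emph{not} controlled by $L$ and $\chi(\Sigma)$; nothing in your argument bounds $\ell$ from below, so the claimed ``elementary estimate on Margulis tube geometry'' bounding the distance from the tube boundary to the core in terms of $L$ and $\chi(\Sigma)$ alone is false. Indeed, any correct bound here must depend on the fixed manifold $N_\rho$ (note that the constant $K(\rho,\alpha,k)$ in the statement does), so a purely uniform Bounded Diameter Lemma argument cannot close the induction.

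The paper's proof avoids exactly this issue by invoking Thurston's compactness for pleated surfaces in a fixed manifold: the space of pleated surfaces in the homotopy class of $\rho$ whose images meet the $K$-ball about $\alpha_\rho$ is compact, so their images lie in a fixed compact subset of $N_\rho$, and this simultaneously bounds $\dist_{N_\rho}(\gamma_\rho,\beta_\rho)$ and rules out $\beta_\rho$ being the core of an arbitrarily deep tube. If you want to salvage your approach, you should replace the uniform Margulis-tube estimate by such a compactness statement (or otherwise establish, using the fixed $\rho$, a positive lower bound on $\length_{N_\rho}(\beta_\rho)$ for all $\beta$ arising at the given step); as written, the inductive step fails.
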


Here, the distance $\dist_{ N_\rho } $ between two subsets of $N_\rho$ is simply the infimum of the distances between points in one and points in the other.

\begin{proof}
We proceed by induction.  The base case $k=0 $ is trivial, so assuming that there is some $K =K (\rho,\alpha,k) $ for which the claim holds for $k $, we will attempt to find a similar constant for $k+1 $.

Assume $\dist_C (\alpha,\beta) = k+ 1 $ and choose some curve $\gamma $ disjoint from $\beta $ with $\dist_C(\alpha,\gamma) =k. $ Since $\gamma$ and $\beta$ are disjoint and $N_\rho $ has no cusps, \cite[Lemma 6.12]{Matsuzakihyperbolic} implies that there is a pleated surface in the homotopy class of $\rho $ that realizes both $\gamma $ and $\beta $.  By the induction hypothesis, the geodesic realization $\gamma_\rho $ lies at a distance at most $K $ from $\alpha_\rho $.  The space of pleated surfaces in $N_\rho $ that intersect the $K $-ball around $\alpha_\rho $ is compact \cite [Lemma 6.13] {Matsuzakihyperbolic}, so this puts an upper bound $K' = K'(\rho,\alpha,k)$ on the distance between the geodesic realizations $\beta_\rho $ and $\gamma_\rho $ (even better, between $\beta_\rho $ and the part of $\gamma_\rho $ that lies at distance $K $ from $\alpha_\rho $).  Thus if $d_C(\alpha, \beta) = k + 1$ then $d_{ N_\rho }(\bar \alpha, \bar \beta) < K+K'$.  
\end{proof}

We can now finish the proof of Proposition \ref {nontrivialkernels}.

\begin{proof}[Proof of Proposition \ref {nontrivialkernels}]
We will prove the contrapositive.  Assume that $\rho\in \CA_f $ is faithful and that it is the limit of some subsequence $(\rho_{i_j}) $ of $(\rho_i) $.  Fix an essential loop $\alpha $ in $\Sigma $.  It follows from Lemma 14.28 in \cite {Kapovichhyperbolic} that for every $L,\epsilon >0 $, we have for sufficiently large $i_j$ a $(1+\epsilon) $-bilipschitz immersion $$\Phi_{i_j} : \CN_L (\alpha_\rho) \longrightarrow N_{ i_j }, $$ where $\CN_L (\alpha_\rho) $ is the radius $L$-neighborhood of $\alpha_\rho$ in $N_\rho$.  Moreover, the map $\Phi_{ i_j } $ is  compatible with our markings: its composition with a map $\Sigma \to N_\rho $ in the homotopy class of $\rho $ is a map $\Sigma \to N_{\rho_{ i_j } } $ in the homotopy class of $\rho_{ i_j } $.

 Lemma \ref{distanceboundslem} implies that given $k>0$, there is some such $\Phi_{ i_j } $ whose domain contains the geodesic representative $\gamma_\rho $ of any curve $\gamma $ with $\dist_{ C } (\alpha,\gamma) \leq k $.  As long as the bilipschitz constant of $\Phi_{ i_j } $ is very small, the image $\Phi_{i_j }(\gamma_\rho)$ will be a closed curve in $N_{\rho_{ i_j } } $ in the homotopy class of $\rho_{ i_j } (\gamma) $ with geodesic curvatures less than $1$.  This curve is then homotopically essential in $N_{i_j}$.  So, for every $k >0 $ we have for sufficiently large $i_j $ that $$\dist_C (\alpha,\gamma) < k \ \ \Longrightarrow \ \gamma \notin \ker \rho_{ i_j } .$$
Geometrically, this means that in the curve complex $\mathcal C (\Sigma) $ the set of curves that lie in $\ker \rho_{ i_j} $ becomes farther and farther away from $\alpha $ as $i_j \to \infty $. However, we saw in Proposition \ref {propsequences} that $$\ker \rho_{ i_j } = f_\star^ {- i_j } \big(\ker ( \pi_1 \Sigma \to \pi_1 M) \big),$$ so the set of simple closed curves lying in $\ker \rho_{ i_j } $ is exactly the image $f^ { -i_j } ( \mathcal D(M))$ of the disk set of $M $.  Composing the entire picture with $f^ { i_j } $, $$\dist_{\mathcal C (\Sigma) } \big(f ^ {  i_j } (\gamma), \mathcal D (H)\big) \to \infty \ \text { as } \ i_j \to \infty. $$
Applying Proposition \ref {boundeddistance}, the stable lamination $\lambda_+ (f)$  cannot lie in $\Lambda (M) $.
\end{proof}

At this point, Theorem \ref {Main} follows from applying the machinery we have built. Recall the statement given in the introduction.

\begin{named}{Theorem \ref{Main}}
Let $f \in \Mod (\Sigma) $ be a pseudo-Anosov map on some boundary component $\Sigma $ of a compact, orientable and irreducible $3 $-manifold $M $.  Then the (un)stable lamination of $f $ lies in $\Lambda (M)$ if and only if $f $ has a power that partially extends to $M$.
\end{named}

\begin{proof}
The `if' direction is trivial.  If $f^ i $ extends to a nontrivial sub-compression body $C \subset M $, then any meridian $\gamma $ for $C $ gives sequences $(f^{ k i} (\gamma)) $ and $(f^{ -k i} (\gamma)) $ of meridians that converge to the stable and unstable laminations of $f $.

Assume now that the stable lamination of $f $ lies in $\Lambda (M)$; the same argument will work for unstable laminations if we first invert $f $. As mentioned in the introduction, we can assume without loss of generality that $M $ is a compression body with exterior boundary $\Sigma $. Build the sequence of representations $(\rho_i) $ as we did in Section \ref {sequences}.  By Corollary \ref{extending}, some power $f^ k$ extends to a subcompression body $C \subset M $ that is homeomorphic to $N_\rho $ for some algebraic accumulation point $\rho$ of $(\rho_i) $.  Proposition \ref{nontrivialkernels} implies that $\rho $ must have a nontrivial kernel.  So, the compression body $C $ cannot be trivial, implying that $f^ k$ partially extends to $M $.
\end{proof}

\bibliographystyle{amsplain}
\bibliography{manifolds}

\end{document}